\theoremstyle{plain}
 \newtheorem{theorem}{Theorem}[section]
 \newtheorem{proposition}[theorem]{Proposition}
 \newtheorem{lemma}[theorem]{Lemma}
 \newtheorem{corollary}[theorem]{Corollary}
 \newtheorem*{claim*}{Claim}
\theoremstyle{definition}
 \newtheorem{remark}[theorem]{Remark}
  \newtheorem{question}{Question}
 \newtheorem{definition}[theorem]{Definition}
\numberwithin{equation}{section}
\newcommand{\anf}[1]{{\text{``}\hspace{0.3ex}{#1}\hspace{0.3ex}\text{''}}}
\newcommand{\cof}[1]{{{\rm{cof}}(#1)}}
\newcommand{\dom}[1]{{{\rm{dom}}(#1)}}
\newcommand{\ran}[1]{{{\rm{ran}}(#1)}}
\newcommand{\length}[2]{{{\rm{lh}}_{{#2}}(#1)}}
\newcommand{\Set}[2]{\{{#1}~\vert~{#2}\}}
\newcommand{\seq}[2]{\langle{#1}~\vert~{#2}\rangle}
\newcommand{\map}[3]{{#1}:{#2}\longrightarrow{#3}}
\newcommand{\Map}[5]{{#1}:{#2}\longrightarrow{#3};~{#4}\longmapsto{#5}}
\newcommand{\POT}[1]{{\mathcal{P}}({#1})}
\newcommand{\Club}[1]{{\mathcal{Club}}({#1})}
\newcommand{\NS}[1]{{\mathcal{NS}}({#1})}
\newcommand{\Cond}[1]{{\mathsf{Cond}}({#1})}
\newcommand{\HH}[1]{{\rm{H}}(#1)}
\newcommand{\Refl}[1]{{\rm{Refl}}(#1)}
\newcommand{\Lim}{{\rm{Lim}}}
\newcommand{\ZFC}{{\rm{ZFC}}}
\newcommand{\GCH}{{\rm{GCH}}}
\newcommand{\MM}{{\rm{MM}}}
\newcommand{\BMM}{{\rm{BMM}}}
\newcommand{\PFA}{{\rm{PFA}}}
\newcommand{\Ord}{{\rm{Ord}}}
\newcommand{\Add}[2]{{\rm{Add}}({#1},{#2})}
\newcommand{\PPP}{\mathbb{P}}
\newcommand{\SSS}{\mathbb{S}}
\newcommand{\TTT}{\mathbb{T}}
\newcommand{\calA}{\mathcal{A}}
\newcommand{\calB}{\mathcal{B}}
\newcommand{\calC}{\mathcal{C}}
\newcommand{\calD}{\mathcal{D}}
\newcommand{\calE}{\mathcal{E}}
\newcommand{\calI}{\mathcal{I}}
\newcommand{\calN}{\mathcal{N}}
\newcommand{\calS}{\mathcal{S}}
\newcommand{\calT}{\mathcal{T}}
\newcommand{\calU}{\mathcal{U}}
\newcommand{\calX}{\mathcal{X}}
\renewcommand{\leq}{\leqslant}
\renewcommand{\geq}{\geqslant}
\renewcommand{\setminus}{\smallsetminus}
\newfont{\TITf}{cmssdc10 scaled 1440}
\title{The complexity of non-stationary ideals}
\author{Philipp L\"{u}cke}
\address{Fachbereich Mathematik, Universit\"at Hamburg, Bundesstra{\ss}e 55, Hamburg, 20146, Germany}
\email{philipp.luecke@uni-hamburg.de}
\begin{document}

\setcounter{page}{1}

\vspace*{40mm}

\thispagestyle{empty}

{
\TITf\setlength{\parskip}{\smallskipamount}

\begin{center}
Philipp L\"{u}cke

\bigskip\bigskip\bigskip

{THE COMPLEXITY OF NON-STATIONARY IDEALS}

\end{center}
}

\vspace{6em}{\leftskip3em\rightskip3em
\emph{Abstract}.
We present an overview of results on the question of whether the non-stationary ideal of an uncountable regular cardinal $\kappa$ can be defined by a $\Pi_1$-formula using parameters of hereditary cardinality at most $\kappa$. 
These results   show that this question is deeply connected to several central topics  of current research in set theory.

\bigskip\emph{Mathematics Subject Classification} (2010):  
Primary: 03E47; Secondary 03E05, 03E35, 03E45. 


\bigskip\emph{Keywords}: Club filters, non-stationary ideals,  $\Delta_1$-definability, Canary trees, condensation principles, generalized descriptive set theory, stationary reflection.}

\newpage\thispagestyle{empty}

\maketitle
\tableofcontents


\section{Introduction}  

A central aspect of the combinatorial behavior of uncountable regular cardinals $\kappa$ is given by the fact that the collection $\Club{\kappa}$ of all subsets of $\kappa$ that contain a closed unbounded subset forms a normal filter on $\kappa$, the \emph{club filter} on $\kappa$.  
 Since the structural properties of these filters and the corresponding dual ideals $\NS{\kappa}$ of non-stationary subsets provide important information about the underlying model of set theory, the study of these objects plays a central role in modern set theory. 
 In \cite{MR1222536} and \cite{MR1242054}, Mekler, Shelah and V{\"a}{\"a}n{\"a}nen initiated the study of the complexity of club filters and non-stationary ideals and showed that these investigations are deeply connected to several research lines  in both model theory and set theory.

 Remember that a formula $\varphi$ in the language of set theory is a \emph{$\Sigma_0$-formula} if it is contained in the smallest collection of formulas in this language that contains all atomic formulas and is closed under negations, conjunctions, disjunctions  and bounded quantification. Next, for a natural number $n$, we say that the negation of a $\Sigma_n$-formula is a \emph{$\Pi_n$-formula}. Finally, given a natural number $n$, a set-theoretic formula is a \emph{$\Sigma_{n+1}$-formula} if it is of the form $\exists x \psi$ for some $\Pi_n$-formula $\Psi$. 
 In the following, we say that a class $A$ is \emph{definable} by a formula $\varphi(v_0,v_1)$ and a parameter $b$ if $A=\Set{a}{\varphi(a,b)}$ holds. 
 Given an uncountable regular cardinal $\kappa$, it is now easy to see that the sets $\Club{\kappa}$ and $\NS{\kappa}$ are both definable by $\Sigma_1$-formulas with parameter $\kappa$. 
 In contrast, it is not clear if it is also possible to define these sets by a $\Pi_1$-formula and parameters contained in the collection $\HH{\kappa^+}$ of all sets of hereditary cardinality at most $\kappa$, and   the study of the complexity of $\Club{\kappa}$ and $\NS{\kappa}$ focusses on answering this question for various uncountable regular cardinals $\kappa$ in different models of set theory. 
 For this purpose, given an infinite cardinal $\kappa$, we say that a set $\calA$ of subsets of $\kappa$ is a \emph{$\mathbf{\Delta}_1$-subset of $\POT{\kappa}$} if $\calA$ is definable by both a $\Sigma_1$- and a $\Pi_1$-formula with parameters in $\HH{\kappa^+}$. Using this terminology, we can now phrase the above question in the following way:

\begin{question}\label{question:Q1}
 Given an uncountable regular cardinal $\kappa$, is the set $\Club{\kappa}$ a $\mathbf{\Delta}_1$-subset of $\POT{\kappa}$? 
\end{question}

 Note that the basic closure properties\footnote{See, for example, {\cite[Chapter 1]{MR750828}}.} of the classes of $\Sigma_1$- and $\Pi_1$-definable subsets of $\POT{\kappa}$ ensure that the above question has an  affirmative answer if and only if $\NS{\kappa}$ is a $\mathbf{\Delta}_1$-subset of $\POT{\kappa}$. 
 Motivated by the fact that, in our results providing negative answers to the above questions, we often derive a statement that substantially strengthens the non-$\mathbf{\Delta}_1$-definability of $\Club{\kappa}$, we also consider a weakening of Question \ref{question:Q1} whose formulation is motivated by the classical \emph{Lusin Separation Theorem} (see {\cite[Theorem 14.7]{MR1321597}}) from descriptive set theory. 
  Given a set $\calX$ and disjoint subsets $\calA$ and $\calB$ of $\calX$, we say that a subset $\calS$ of $\calX$ \emph{separates $\calA$ from $\calB$} if $\calA\subseteq\calS\subseteq\calX\setminus\calB$ holds.

\begin{question}\label{question:Q2}
 Given an uncountable regular cardinal $\kappa$, is there a $\mathbf{\Delta}_1$-subset of $\POT{\kappa}$ that separates $\Club{\kappa}$ from $\NS{\kappa}$? 
\end{question}

 Typical candidates for subsets of $\POT{\kappa}$ that provide affirmative answers to Question \ref{question:Q2} are given by the restrictions of the club filter  to stationary subsets of $\kappa$. 
 Given an uncountable regular cardinal $\kappa$ and a stationary subset subset $S$ of $\kappa$, we define $$\Club{S}  =  \Set{A\subseteq\kappa}{A\cup(\kappa\setminus S)\in\Club{\kappa}}$$ as well as $$\NS{S}  =  \Set{A\subseteq\kappa}{\kappa\setminus A\in\Club{S}}.$$
 Note that $\Club{S}$ and $\NS{S}$ are disjoint subsets of $\POT{\kappa}$ that are definable by $\Sigma_1$-formulas with parameter $S$. Moreover, the above definition ensures that the set $\Club{S}$ is a $\mathbf{\Delta}_1$-subset of $\POT{\kappa}$ if and only if the set $\NS{S}$ has this property. Finally, it is easy to see that the stationarity of $S$ guarantees that the set $\Club{S}$ separates $\Club{\kappa}$ from $\NS{\kappa}$. This shows that the existence of a stationary subset $S$ of $\kappa$ with the property that $\Club{S}$ is a $\mathbf{\Delta}_1$-subset of $\kappa$ provides an affirmative answer to Question \ref{question:Q2}.

  In the remainder of this paper, we will present results that provide answers to the above questions in different models of set theory. These reveal deep connections between these questions and  central topics  of contemporary research in set theory. 
  In Section \ref{section:2}, we will discuss four settings that provide affirmative answers to the above questions. Contrasting this, the results discussed in Section \ref{section:3} present four settings in which well-studied structural properties of the models of set theory lead to negative answers to these questions.


\section{Definability results}\label{section:2}

In the following, we present different  ways to obtain affirmative answers to the  above two questions.


\subsection{Dense ideals}
 Our first and most direct examples of   $\mathbf{\Delta}_1$-definable club filters arise from strong saturation properties of non-stationary ideals.

\begin{definition}
 An ideal $\calI$ on an infinite cardinal $\kappa$ is \emph{dense} if there exists a subset $\calD$ of $\POT{\kappa}\setminus\calI$ of cardinality $\kappa$ with the property that for every $A\in\POT{\kappa}\setminus\calI$, there exists $D\in\calD$ with $D\setminus A\in\calI$. 
\end{definition}

Note that a dense ideal $\calI$ on a cardinal $\kappa$ is \emph{$\kappa^+$-saturated}, {i.e.,} there exists no sequence $\seq{S_\gamma}{\gamma<\kappa^+}$ of elements of $\POT{\kappa}\setminus\calI$ with the property that $S_\gamma\cap S_\delta\in\calI$ holds for all $\gamma<\delta<\kappa^+$. We are now interested in the complexity of dense ideals.

\begin{proposition}
 Let $\calI$ be a dense ideal on an infinite  cardinal $\kappa$. If $\calI$ is definable by a $\Sigma_1$-formula with parameters in $\HH{\kappa^+}$, then  $\calI$ is a $\mathbf{\Delta}_1$-subset of $\POT{\kappa}$. 
\end{proposition}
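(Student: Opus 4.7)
The plan is to exhibit a $\Pi_1$-formula with parameters in $\HH{\kappa^+}$ that defines $\calI$. Since a subset of $\POT{\kappa}$ is $\mathbf{\Delta}_1$ precisely when both it and its complement in $\POT{\kappa}$ are $\Sigma_1$-definable with parameters in $\HH{\kappa^+}$, it suffices to show that $\POT{\kappa}\setminus\calI$ has a $\Sigma_1$-definition using such parameters.

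Let $\calD\subseteq\POT{\kappa}\setminus\calI$ be a witness to the density of $\calI$, as in the definition. Since $\vert\calD\vert=\kappa$ and every element of $\calD$ is a subset of $\kappa$, the set $\calD$ has hereditary cardinality at most $\kappa$, hence belongs to $\HH{\kappa^+}$ and is thus an admissible parameter. The key claim I would verify is that, for every $A\subseteq\kappa$,
\begin{equation*}
 A\notin\calI \quad\Longleftrightarrow\quad \exists D\in\calD\ (D\setminus A\in\calI).
\end{equation*}
The forward direction is exactly the defining property of $\calD$. For the reverse direction, suppose $A\in\calI$ and take any $D\in\calD$. Since $D\cap A\subseteq A$, the downward closure of $\calI$ gives $D\cap A\in\calI$. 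If in addition $D\setminus A\in\calI$, then $D=(D\cap A)\cup(D\setminus A)\in\calI$, contradicting $\calD\subseteq\POT{\kappa}\setminus\calI$.

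Granting this equivalence, the right-hand side is a $\Sigma_1$-formula with parameters $\calD$ and whatever parameters in $\HH{\kappa^+}$ witness the assumed $\Sigma_1$-definition of $\calI$, because bounded quantification over the set $\calD$ and the assertion $D\setminus A\in\calI$ are both $\Sigma_1$ in those parameters. Therefore the complement of $\calI$ in $\POT{\kappa}$ is $\Sigma_1$-definable with parameters in $\HH{\kappa^+}$, which yields a $\Pi_1$-definition of $\calI$ and hence the desired $\mathbf{\Delta}_1$-definability.

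I do not expect genuine obstacles here; the one subtlety to keep an eye on is ensuring that the parameter $\calD$ genuinely lies in $\HH{\kappa^+}$ (which rests on the density clause providing a witness of cardinality exactly $\kappa$, not merely some saturation witness) and that the quoted closure properties of $\Sigma_1$-definable classes over $\HH{\kappa^+}$ are being applied correctly, as flagged by the footnote referring to standard material.
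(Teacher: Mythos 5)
Your proof is correct and follows essentially the same route as the paper: use the dense family $\calD\in\HH{\kappa^+}$ as a parameter and the equivalence $A\notin\calI\iff\exists D\in\calD\,(D\setminus A\in\calI)$ to give a $\Sigma_1$-definition of the complement. The only difference is that you spell out the verification of the reverse implication, which the paper leaves implicit.
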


\begin{proof}
 Pick $\calD\subseteq\POT{\kappa}\setminus\calI$ witnessing that $\calI$ is dense. 
  Given $A\subseteq\kappa$, we have $A\notin\calI$ if and only if there is $D\in\calD$ with $D\setminus A\in\calI$. 
  Since $\calD\in\HH{\kappa^+}$ and $\calI$ is  definable by a $\Sigma_1$-formula with parameters in $\HH{\kappa^+}$, this equivalence shows that $\POT{\kappa}\setminus\calI$ is also  definable by a $\Sigma_1$-formula with parameters in $\HH{\kappa^+}$. 
\end{proof}

\begin{corollary}
 If $\NS{\omega_1}$ is dense, then $\Club{\omega_1}$ is a $\mathbf{\Delta}_1$-subset of $\POT{\omega_1}$. \qed 
\end{corollary}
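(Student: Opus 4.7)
The plan is to obtain this corollary as a direct application of the preceding proposition, with two small observations that bridge the gap between the hypothesis on $\NS{\omega_1}$ and the conclusion on $\Club{\omega_1}$.

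First, I would verify the hypothesis of the proposition for the ideal $\calI = \NS{\omega_1}$ on $\kappa = \omega_1$. Density is given by assumption. For the required $\Sigma_1$-definability with parameters in $\HH{\omega_2}$, I would invoke the remark made in the introduction: for any uncountable regular cardinal $\kappa$, the set $\NS{\kappa}$ is $\Sigma_1$-definable with parameter $\kappa$, and since $\omega_1 \in \HH{\omega_2}$, this is a parameter of the required hereditary cardinality. Thus, both hypotheses of the proposition are satisfied.

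Applying the proposition then yields that $\NS{\omega_1}$ is a $\mathbf{\Delta}_1$-subset of $\POT{\omega_1}$. To pass from this to the statement about $\Club{\omega_1}$, I would cite the closure properties of $\Sigma_1$- and $\Pi_1$-definable classes noted in the introduction (specifically, closure under taking complements relative to $\POT{\kappa}$, using $\POT{\kappa} \in \HH{\kappa^+}$ as an extra parameter if necessary): a set $\calA \subseteq \POT{\omega_1}$ is a $\mathbf{\Delta}_1$-subset of $\POT{\omega_1}$ if and only if its complement $\POT{\omega_1}\setminus\calA$ is. Since $\Club{\omega_1}$ is the image of $\NS{\omega_1}$ under the map $A \mapsto \omega_1\setminus A$, which is itself $\Sigma_0$-definable from $\omega_1$, the $\mathbf{\Delta}_1$-definability transfers directly, completing the argument.

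There is essentially no obstacle here, as the corollary is a two-line deduction; the only point requiring a moment of care is that one is tacitly using the observation, already flagged in the introduction, that $\mathbf{\Delta}_1$-definability of $\Club{\kappa}$ and of $\NS{\kappa}$ are equivalent.
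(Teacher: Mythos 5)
Your proposal is correct and is precisely the deduction the paper intends by marking the corollary with \qed: apply the preceding proposition to $\calI=\NS{\omega_1}$ using the $\Sigma_1$-definability of $\NS{\omega_1}$ with parameter $\omega_1\in\HH{\omega_2}$ noted in the introduction, and then pass from $\NS{\omega_1}$ to $\Club{\omega_1}$ via the equivalence of their $\mathbf{\Delta}_1$-definability, also recorded in the introduction. No differences from the paper's (implicit) argument.
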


 Note that Woodin proved in \cite{MR1713438}  that, over the theory $\ZFC$, the statement that $\NS{\omega_1}$ is dense is equiconsistent to the existence of infinitely many Woodin cardinals. 
 Moreover,  a combination of Theorem \ref{theorem:HoffelnerEtAlDelta} below with the results of \cite{MR4250390} and \cite{MM} shows that the assumption that $\NS{\omega_1}$ is $\aleph_2$-saturated does not imply that $\Club{\omega_1}$ is a $\mathbf{\Delta}_1$-subset of $\POT{\omega_1}$.


\subsection{Canary trees} 
  Next, we discuss the historically first examples of $\mathbf{\Delta}_1$-definable restrictions of the club filters to stationary sets that arose from the study of so-called \emph{canary trees} by Mekler and Shelah in \cite{MR1222536}, and later work of Hyttinen and Rautila in \cite{MR1877015}. 
 Given infinite regular cardinals $\mu<\kappa$, we let $S^\kappa_\mu$ denote the set of all limit ordinals $\alpha<\kappa$ of cofinality $\mu$.
   Moreover, for   natural numbers $m<n$, we write $S^n_m$ instead of $S^{\omega_n}_{\omega_m}$. 
  In addition,    we define $S^\kappa_{{<}\mu}$ and $S^\kappa_{{>}\mu}$ in the obvious ways. 
     Finally, given a (set-theoretic) tree $\TTT$, we let $[\TTT]$ denote the set of cofinal branches through $\TTT$.

\begin{definition}[\cite{MR1877015,MR1222536}]
 Given an infinite regular cardinal $\kappa$, a \emph{$\kappa$-canary tree} is a tree $\TTT$ of height $\kappa^+$ with $[\TTT]=\emptyset$ and the property that $\mathbbm{1}_\PPP\Vdash\anf{[\check{\TTT}]\neq\emptyset}$ holds whenever $S$ is a stationary  subset of $S^{\kappa^+}_\kappa$ and $\PPP$ is a ${<}\kappa^+$-distributive partial order with $\mathbbm{1}_\PPP\Vdash\anf{\check{S}\in\NS{\check{\kappa}^+}}$. 
\end{definition}

In the following, we  present results from \cite{MR1242054} that relate the existence and non-existence of $\kappa$-canary trees to the complexity of the club filter on $\kappa$. The starting point to establish this connection is given by the next definition:

 \begin{definition}
   Given an infinite regular cardinal $\kappa$ and a subset $S$ of $S^{\kappa^+}_\kappa$, we let $\TTT(S)$ denote the tree whose underlying set consists of all strictly increasing $s\in{}^{{<}\kappa^+}\kappa^+$ such that $\dom{s}$ is a successor ordinal, $\ran{s}\subseteq S$ and $s$ is continuous at all points of cofinality $\kappa$ in its domain and whose ordering is given by end-extension. 
 \end{definition}

 \begin{proposition}\label{proposition:TreesTS}
  Let $\kappa$ be an infinite regular cardinal. 
  \begin{enumerate}
   \item\label{item:TreeStat1} If $S$ is a stationary subset of $S^{\kappa^+}_\kappa$, then the tree $\TTT(S)$ has height $\kappa^+$. 
   
   \item\label{item:TreeStat2} A subset $S$ of $S^{\kappa^+}_\kappa$ is an element of $\Club{S^{\kappa^+}_\kappa}$ if and only if the tree $\TTT(S)$ has height $\kappa^+$ and $[\TTT(S)]\neq\emptyset$ holds. 
  \end{enumerate}
 \end{proposition}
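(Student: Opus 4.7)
I would prove part (1) by transfinite induction on $\beta < \kappa^+$ with the strengthened hypothesis: for every $\gamma \leq \beta$, every $t \in \TTT(S)$ with $\dom{t} = \gamma+1$, and every $\eta < \kappa^+$, there exists an extension $s \supseteq t$ in $\TTT(S)$ with $\dom{s} = \beta+1$ and $s(\beta) > \eta$. The base and successor cases are immediate from the unboundedness of $S$. For a limit $\beta$, fix a strictly increasing continuous cofinal sequence $\langle \beta_i : i < \cof{\beta}\rangle$ in $\beta$ with $\beta_0 = \gamma$ together with an increasing sequence $\langle \eta_i\rangle$ climbing above $\eta$, and iteratively build extensions $t_i \in \TTT(S)$ of length $\beta_i+1$ with $t_0 = t$, $t_{i+1} \supseteq t_i$, and $t_i(\beta_i) > \eta_i$, using the inductive hypothesis at successor steps. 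At a limit index $j < \cof{\beta}$, the union of the $t_i$'s for $i < j$ has domain $\beta_j$ of cofinality $\cof{j} < \kappa$, so $\beta_j$ is not a cof-$\kappa$ point of the domain and $t_j(\beta_j)$ may be freely chosen in $S$ above both the current supremum and $\eta_j$; continuity at cof-$\kappa$ points interior to $[0, \beta_i]$ is inherited from $t_i \in \TTT(S)$. When $\cof{\beta} < \kappa$ this completes the construction.

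The crux is $\cof{\beta} = \kappa$, where $s(\beta) = \sup s[\beta]$ is forced and must belong to $S$. I would invoke stationarity via an elementary submodel $M \prec \HH{\theta}$ of size $\kappa$, closed under ${<}\kappa$-sequences, containing $t, \beta, \eta, S$, and chosen so that $\delta := M \cap \kappa^+$ lies in $S$ above $\eta$---a standard consequence of stationarity of $S$, with $\cof{\delta} = \kappa$ automatic. Fix $\langle \eta_i : i < \kappa\rangle$ cofinal in $\delta$ and carry out the iterative construction of the $t_i$'s entirely inside $M$: at each stage, elementarity of $M$ together with the inductive hypothesis produces a suitable $t_i \in M$. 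The resulting union has range contained in $M \cap \kappa^+ = \delta$ with supremum at least $\sup_i \eta_i = \delta$, so the forced value $s(\beta) = \delta$ lies in $S$ and exceeds $\eta$.

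For part (2), the $(\Leftarrow)$ direction combines part (1) with a direct recursive construction: given a club $C \subseteq \kappa^+$ with $C \cap S^{\kappa^+}_\kappa \subseteq S$, the set $S$ is stationary, so part (1) supplies height $\kappa^+$; a cofinal branch $s : \kappa^+ \to \kappa^+$ is built by recursion, always choosing values in $S \cap C$ (unbounded since $C \cap S^{\kappa^+}_\kappa$ is), so that closedness of $C$ forces the supremum at every cof-$\kappa$ limit stage to remain in $C \cap S^{\kappa^+}_\kappa \subseteq S$. For $(\Rightarrow)$, given a cofinal branch $s \in [\TTT(S)]$, set $C := \overline{\ran{s}}$, a club in $\kappa^+$. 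For any $\gamma \in C \cap S^{\kappa^+}_\kappa$, consider $\alpha^* := \min\{\alpha < \kappa^+ : s(\alpha) \geq \gamma\}$: if $\gamma \notin \ran{s}$, then $\{s(\alpha) : \alpha < \alpha^*\}$ is cofinal in $\gamma$, which forces $\cof{\alpha^*} = \cof{\gamma} = \kappa$, whereupon continuity of $s$ at $\alpha^*$ yields $s(\alpha^*) = \gamma$, a contradiction. Hence $C \cap S^{\kappa^+}_\kappa \subseteq \ran{s} \subseteq S$. The main obstacle is the $\cof{\beta} = \kappa$ case of part (1), which is precisely where stationarity of $S$ (not just unboundedness) becomes essential, and where the elementary-submodel reflection packages the bookkeeping needed to arrange that the supremum of the constructed sequence lands in $S$.
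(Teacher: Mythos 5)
Your treatment of part (2) is correct and essentially the paper's own argument in both directions (run a branch through $C\cap S$ one way; take the closed unbounded set of limit points of $\ran{s}$ the other way), and your reduction of part (1) to the extension statement ``every node reaches every successor level with arbitrarily large top value'' is a reasonable plan. Note, though, that the paper proves (1) by a different route: it reduces the claim to the fact that the fat set $S\cup S^{\kappa^+}_{{<}\kappa}$ contains closed subsets of every order type $\gamma+1<\kappa^+$, quoting a lemma of Abraham and Shelah, and then converts such closed sets into nodes of $\TTT(S)$ via monotone enumerations.

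The gap is in your crux case $\cof{\beta}=\kappa$. You require an elementary submodel $M\prec\HH{\theta}$ of cardinality $\kappa$ with ${}^{{<}\kappa}M\subseteq M$ and $M\cap\kappa^+\in S$. Such a model can only exist if $\kappa^{{<}\kappa}=\kappa$: for uncountable $\kappa$ with $2^{{<}\kappa}>\kappa$ (for instance $\kappa=\omega_1$ when $\CH$ fails), any ${<}\kappa$-closed $M$ with $\kappa\subseteq M$ contains all of ${}^{{<}\kappa}2$ and so has cardinality greater than $\kappa$. The proposition carries no such cardinal arithmetic hypothesis, and the paper later uses it in settings where $\kappa^{{<}\kappa}=\kappa$ is not assumed (e.g.\ in Lemma \ref{lemma:ComplexityFromTrees} and in the implication \eqref{item:CanaryChar2}$\Rightarrow$\eqref{item:CanaryChar1} of Lemma \ref{lemma:CanaryEmb}). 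The closure is doing real work for you: at limit indices $j<\kappa$ of the internal recursion you must apply elementarity to the partial union $\bigcup_{i<j}t_i$, an externally constructed ${<}\kappa$-sequence of elements of $M$, and without ${<}\kappa$-closure there is no reason for it to lie in $M$. The standard repair is to eliminate the node parameter: since the junction with a given node occurs at a successor coordinate, extending a node to level $\beta'$ with new values in an interval $(\eta,\zeta)$ is equivalent to finding a closed copy of a suitable ordinal inside $(S\cup S^{\kappa^+}_{{<}\kappa})\cap(\eta,\zeta)$. One then proves, by your induction, that for all $\gamma,\eta<\kappa^+$ such a closed copy of $\gamma+1$ exists above $\eta$; in the case $\cof{\gamma}=\kappa$ the club of closure points of the function $(\eta,\xi)\mapsto$ (least $\zeta$ admitting a closed copy of $\gamma_\xi+1$ in $(\eta,\zeta)$) replaces the ${<}\kappa$-closed model, one picks $\delta\in S$ that is a limit of such closure points, and concatenates pieces between consecutive points of a cofinal $\kappa$-sequence through them; intermediate suprema have cofinality ${<}\kappa$ and hence land in $S^{\kappa^+}_{{<}\kappa}$, while the final supremum is $\delta\in S$. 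This is precisely the cited Abraham--Shelah lemma. Under the extra hypothesis $\kappa^{{<}\kappa}=\kappa$ (or for $\kappa=\omega$, where closure under finite sequences is automatic) your submodel argument is fine as written.
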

 
 \begin{proof}
   \eqref{item:TreeStat1} For every closed unbounded subset $C$ of $\kappa^+$, the set $C\cap(S\cup S^{\kappa^+}_{{<}\kappa})$ contains a closed subset of order-type $\kappa+1$. Therefore, {\cite[Lemma 1.12]{MR0716625}} shows that for every $\gamma<\kappa^+$, the set $S\cup S^{\kappa^+}_{{<}\kappa}$ contains a closed subset of order-type $\gamma+1$. By considering the monotone enumerations of the intersections of such closed subsets with $S$, we can now conclude that the tree $\TTT(S)$ has height $\kappa^+$. 
   
   \eqref{item:TreeStat2}  First, let $S$ be a subset of $S^{\kappa^+}_\kappa$   in     $\Club{S^{\kappa^+}_\kappa}$, let $C$ be a closed unbounded subset of $\kappa^+$ with $C\cap S^{\kappa^+}_\kappa\subseteq S$ and let $\map{s}{\kappa^+}{C\cap S}$ be the monotone enumeration of $C\cap S$. We then know that for every $\gamma<\kappa^+$, the function $s\restriction(\gamma+1)$ is an element of $\TTT(S)$ with $\length{s\restriction(\gamma+1)}{\TTT(S)}=\gamma$. This directly shows that $\TTT(S)$ is a tree of height $\kappa^+$ with $[\TTT(S)]\neq\emptyset$. 
   
   Now, assume that $S$ is a subset of $S^{\kappa^+}_\kappa$ with the property that the tree $\TTT(S)$ has height  $\kappa^+$ and $[\TTT]\neq\emptyset$. Then there is a function $\map{s}{\kappa^+}{\kappa^+}$ with the property that $s\restriction(\gamma+1)$ is an element of $\TTT(S)$ for every $\gamma<\kappa^+$. Let $C$ denote the set of limit points of $\ran{s}$ in $\kappa^+$. Then the definition of $\TTT(S)$ ensures that $C$ is a closed unbounded subset of $\kappa^+$ with $C\cap S^{\kappa^+}_\kappa\subseteq S$. In particular, it follows that $C$ witnesses that $S$ is an element of $\Club{S^{\kappa^+}_\kappa}$.  
 \end{proof}

 The next ingredient to connect the complexity of club filters to the existence of canary trees is the ordering of trees under \emph{order-embeddability}:

\begin{definition}
   Given trees $\SSS$ and $\TTT$, we let $\SSS\preceq\TTT$ denote the statement that there exists a function $\map{e}{\SSS}{\TTT}$ satisfying $e(s_0)<_\TTT e(s_1)$ for all $s_0,s_1\in\SSS$ with $s_0<_\SSS s_1$.  
\end{definition}

 The following result directly generalizes {\cite[Theorem 23]{MR1242054}} and its proof to successor cardinals of arbitrary infinite regular cardinals:

\begin{lemma}\label{lemma:CanaryEmb}
 Given an infinite regular cardinal $\kappa$ and a tree $\TTT$ of  height $\kappa^+$ with $[\TTT]=\emptyset$, consider the following statements: 
  \begin{enumerate}
    \item\label{item:CanaryChar2} $\TTT(S)\preceq\TTT$ holds for all subsets $S$ of $S^{\kappa^+}_\kappa$ that are bistationary in $S^{\kappa^+}_\kappa$.\footnote{Given a stationary subset $S$ of an uncountable regular cardinal $\theta$, we say that a subset $B$ of $S$ is \emph{bistationary in $S$} if both $B$ and $S\setminus B$ are stationary subsets of $\theta$.} 

    \item\label{item:CanaryChar1} $\TTT$ is a $\kappa$-canary tree. 
  \end{enumerate}
 Then \eqref{item:CanaryChar2} implies \eqref{item:CanaryChar1}. Moreover, if $\kappa^{{<}\kappa}=\kappa$ holds, then  \eqref{item:CanaryChar1} also implies \eqref{item:CanaryChar2}.  
\end{lemma}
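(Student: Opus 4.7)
The plan is to prove each implication separately.

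For \eqref{item:CanaryChar2} $\Rightarrow$ \eqref{item:CanaryChar1}, I would start with a stationary $S\subseteq S^{\kappa^+}_\kappa$ and a ${<}\kappa^+$-distributive forcing $\PPP$ with $\mathbbm{1}_\PPP\Vdash\anf{\check{S}\in\NS{\check\kappa^+}}$. The first step is to show that $S$ is bistationary in $S^{\kappa^+}_\kappa$: if instead $S\in\Club{S^{\kappa^+}_\kappa}$ in $V$, a ground-model club $C$ with $C\cap S^{\kappa^+}_\kappa\subseteq S$ survives in $V[G]$, since ${<}\kappa^+$-distributive forcings preserve clubs in $\kappa^+$ and cofinalities ${\leq}\kappa$; the intersection $C\cap S^{\kappa^+}_\kappa$ is then a stationary subset of $S$ in $V[G]$, a contradiction. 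Hence $T := S^{\kappa^+}_\kappa\setminus S$ is stationary, so \eqref{item:CanaryChar2} applied in $V$ to the bistationary set $T$ yields an order-embedding $e:\TTT(T)\to\TTT$. In $V[G]$, the non-stationarity of $S$ gives a club disjoint from $S$ which witnesses $T\in\Club{S^{\kappa^+}_\kappa}$, so Proposition \ref{proposition:TreesTS}\eqref{item:TreeStat2} produces a cofinal branch of $\TTT(T)$ in $V[G]$. Its image under $e$ is a chain of order-type $\kappa^+$ in $\TTT$, whose downward closure is a cofinal branch, showing $[\TTT]\neq\emptyset$ in $V[G]$.

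For \eqref{item:CanaryChar1} $\Rightarrow$ \eqref{item:CanaryChar2} under $\kappa^{{<}\kappa}=\kappa$, the strategy is to use $\TTT(S)$ itself as a forcing, ordered by reverse end-extension. A generic filter induces a strictly increasing map $f:\kappa^+\to S$ continuous at cofinality $\kappa$, whose range's closure is a club witnessing $S\in\Club{S^{\kappa^+}_\kappa}$ in the extension; in particular, $\TTT(S)$ forces $T := S^{\kappa^+}_\kappa\setminus S$ to be non-stationary. Under the cardinal arithmetic assumption, a standard club-shooting argument shows that $\TTT(S)$ is ${<}\kappa^+$-distributive: given a condition and $\kappa$-many dense open sets, one recursively extends while at each cofinality-$\kappa$ supremum uses the stationarity of $S$ to place the limit value in $S$ and thereby preserve continuity. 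The canary property of $\TTT$ then delivers a $\TTT(S)$-name $\dot b$ for a cofinal branch of $\TTT$.

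The desired embedding is extracted from $\dot b$ by transfinite recursion on the levels of $\TTT(S)$. At each stage $\alpha<\kappa^+$ and for each $s\in\TTT(S)$ at tree-level $\alpha$, one extends $s$ (as a forcing condition) to some $s^{*}$ that decides a value $e(s)\in\TTT$ for $\dot b$ at level $\alpha$. Order-preservation follows because any two comparable conditions in $\TTT(S)$ have a common extension forcing the chosen values to lie on a single branch of $\TTT$, hence to be linearly ordered; placing $e(s)$ at strictly increasing levels of $\TTT$ as $s$ rises in $\TTT(S)$ then turns this into the required strict inequality $e(s)<_\TTT e(s')$ whenever $s<_{\TTT(S)}s'$.

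The main obstacle will be this recursive construction: one must coordinate the extensions $s^{*}$ across many nodes so that the resulting $e$ is both defined everywhere on $\TTT(S)$ and genuinely order-preserving, rather than merely an incoherent collection of local choices. This coordination relies crucially on the ${<}\kappa^+$-distributivity of $\TTT(S)$ to collect enough simultaneous decisions at each stage, together with the fact that $\mathbbm{1}_{\TTT(S)}\Vdash\anf{\dot b\text{ is linearly ordered}}$, which converts compatibility of conditions into comparability of the chosen $\TTT$-values.
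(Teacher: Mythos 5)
Your first implication is essentially the paper's argument and is fine: one checks that $S$ must be bistationary, applies the hypothesis to $S^{\kappa^+}_\kappa\setminus S$, and pushes a branch of $\TTT(S^{\kappa^+}_\kappa\setminus S)$ (which is the same tree in $V$ and $V[G]$ by distributivity) through the embedding. The converse direction, however, has a genuine gap at its core. Your mechanism for order-preservation --- \anf{any two comparable conditions in $\TTT(S)$ have a common extension forcing the chosen values to lie on a single branch} --- is false. In the forcing $\TTT(S)$ (or in the club-shooting forcing $\PPP_S$), two conditions are compatible only if one end-extends the other. If $s<_{\TTT(S)}s'$ and you pick $s^{*}$ extending $s$ and $(s')^{*}$ extending $s'$ independently, these two extensions will in general diverge strictly above $s$ and hence be incompatible, so nothing forces $e(s)$ and $e(s')$ to be $<_\TTT$-comparable. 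Distributivity lets you decide $\dot b$ up to any level below a single condition, but it does not coordinate decisions across the $\kappa^+$-branching of $\TTT(S)$; you have correctly identified this as \anf{the main obstacle}, but the tools you invoke do not overcome it.

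What is actually needed --- and what the paper's proof spends all its effort on --- is a recursion \emph{along the tree} $\TTT(S)$ producing a coherent system $\seq{\langle t_s,c_s\rangle\in\TTT\times\PPP_S}{s\in\TTT(S)}$ with $c_s\leq_{\PPP_S}c_r$ and $t_r\leq_\TTT t_s$ whenever $r\leq_{\TTT(S)}s$, and $c_s\Vdash\anf{\check t_s\in\dot B}$; the embedding is then read off from the $t_s$. The crux is the limit stage where $\max(\dom{s})$ has cofinality $\kappa$: there one must amalgamate the $\kappa$-many conditions $\Set{c_r}{r<_{\TTT(S)}s}$ into a single condition, which requires $\sup\Set{\max(c_r)}{r<_{\TTT(S)}s}$ (an ordinal of cofinality $\kappa$) to lie in $S$. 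This is arranged by a deliberate bookkeeping discipline --- only advancing $t_s$ and $c_s$ when $\max(\ran{s})$ overtakes the current $\length{t_s}{\TTT}$, and interleaving $\max(\ran{s})\leq\length{t_s}{\TTT}\leq\max(c_s)$ --- which forces that supremum to equal $\max(\ran{s})\in S$. Your proposal contains no substitute for this amalgamation argument, and without it the recursion cannot pass through the cofinality-$\kappa$ limit nodes of $\TTT(S)$, which is exactly where the hypothesis that $s$ takes values in $S$ and is continuous there must be used.
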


\begin{proof}
 First, assume that \eqref{item:CanaryChar2} holds, let $S$ be a stationary subset of $S^{\kappa^+}_\kappa$ and let $\PPP$ be a ${<}\kappa^+$-distributive partial order with $\mathbbm{1}_\PPP\Vdash\anf{\check{S}\in\NS{\check{\kappa}^+}}$. Set $\SSS=\TTT(S^{\kappa^+}_\kappa\setminus S)$. 
   The fact that forcing with $\PPP$ preserves the regularity of $\kappa^+$ then ensures that  $S$ is bistationary in $S^{\kappa^+}_\kappa$. Our assumptions now yield a function $\map{e}{\SSS}{\TTT}$ with $e(s_0)<_\TTT e(s_1)$ for all $s_0,s_1\in\SSS$ with $s_0<_\SSS s_1$. 
   Now, let $G$ be $\PPP$-generic over $V$. Since $V$ and $V[G]$ contain the same bounded subsets of $\kappa^+$, we know that $\TTT(S^{\kappa^+}_\kappa\setminus S)^{V[G]}=\SSS$. Moreover, since $S$ is an element of $\NS{\kappa^+}^{V[G]}$, we know that $S^{\kappa^+}_\kappa\setminus S\in\Club{S^{\kappa^+}_\kappa}$ and Proposition \ref{proposition:TreesTS}.\ref{item:TreeStat2} shows that, in $V[G]$, the tree $\SSS$ has height $\kappa^+$ and contains a cofinal branch. But, this allows us to conclude that the tree $\TTT$ has a cofinal branch in $V[G]$.

 Now, assume that $\kappa^{{<}\kappa}=\kappa$ and \eqref{item:CanaryChar1} holds. 
 Fix a subset $S$ of $S^{\kappa^+}_\kappa$ that is  bistationary in $S^{\kappa^+}_\kappa$ and let $\PPP_S$ denote the canonical partial order to add a closed unbounded subset to $S\cup S^{\kappa^+}_{{<}\kappa}$, {i.e.,}  conditions in $\PPP_S$ are non-empty, closed and bounded subsets $c$ of $\kappa^+$ with $S^{\kappa^+}_\kappa\cap c\subseteq S$ and the ordering of $\PPP_S$ is given by reversed end-extension. 
 The assumption that $\kappa^{{<}\kappa}=\kappa$ holds then allows us to apply {\cite[Theorem 1]{MR0716625}} to show that $\PPP_S$ is ${<}\kappa^+$-distributive and  $\mathbbm{1}_{\PPP_S}\Vdash\anf{S^{\check{\kappa}^+}_{\check{\kappa}}\setminus\check{S}\in\NS{\check{\kappa}^+}}$ holds. 
  By our assumption, we can now find a $\PPP_S$-name $\dot{B}$ with $\mathbbm{1}_{\PPP_S}\Vdash\anf{\dot{B}\in[\check{\TTT}]}$. 
   In the following, we inductively construct a system $\seq{\langle t_s,c_s\rangle\in\TTT\times\PPP_S}{s\in\TTT(S)}$ such that the following statements hold for all $s\in\TTT(S)$: 
     
  \begin{enumerate}[label=(\roman*)]
   \item  $\max(\ran{s})\leq\length{t_s}{\TTT}\leq\max(c_s)$. 
   
   \item $c_s\Vdash_{\PPP_S}\anf{\check{t}_s\in\dot{B}}$. 
   
   \item If $r\in\TTT(S)$ with $r\leq_{\TTT(S)}s$, then $t_r\leq_\TTT t_s$ and $c_s\leq_{\PPP_S}c_r$. 
   
   \item If there is a $q\in\TTT(S)$ with $q<_{\TTT(S)}s$ and $\max(\ran{s})\leq\length{t_q}{\TTT}$, then $t_q=t_r$ and $c_q=c_r$ for all $r\in\TTT(S)$ with $q\leq_{\TTT(S)}r\leq_{\TTT(S)}s$.  
   
   \item If $\max(\ran{s})>\length{t_r}{\TTT}$  for all $r\in\TTT(S)$ with $r<_{\TTT(S)}s$, then $$\length{t_s}{\TTT} ~ > ~ \sup\Set{\max(c_r)}{r\in\TTT(S), ~ r<_{\TTT(S)}s}.$$ 
  \end{enumerate}

  First, if $s\in\TTT(S)$ with $\dom{s}=\{0\}$, then we define $t_s$ to be a minimal element of $\TTT$
 and set $c_s=\{0\}$. 
   Next, fix $s\in\TTT(S)$ with $\dom{s}>1$ and assume that we already defined pairs $\langle t_r,c_r\rangle$ with the above properties for all $r\in\TTT(S)$ with $r<_{\TTT(S)}s$. 
 If there is $r\in\TTT(S)$ with $r<_{\TTT(S)}s$ and $\max(\ran{s})\leq\length{t_r}{\TTT}$, then we define $t_s=t_r$ and $c_s=c_r$ and all of the above statements are satisfied.  
 Hence, we may assume that $\max(\ran{s})>\length{t_r}{\TTT}$ holds for all $r\in\TTT(S)$ with $r<_{\TTT(S)}s$.  
  First, assume that there is $r\in\TTT(S)$ with $r<_{\TTT(S)}s$ and $\dom{s}=\dom{r}+1$. Then we can easily find $t_s\in \TTT$ and $c_s\in\PPP_S$ such that $t_r<_\TTT t_s$, $\length{t_s}{\TTT}>\max(\ran{s})+\max(c_r)$, $\max(c_s)\geq\length{t_s}{\TTT}$ and $c_s\Vdash_{\PPP_S}\anf{\check{t}_s\in\dot{B}}$. 
 These choices then directly ensure that all of the above statements hold. 
 Now, assume that there is no maximal element below $s$ in $\TTT(S)$.

  \begin{claim*}
   If $\max(\dom{s})\in S^{\kappa^+}_\kappa$, then   $$\max(\ran{s}) ~ = ~ \sup\Set{\max(c_p)}{p\in\TTT(S), ~ p<_{\TTT(S)}s}.$$ 
 \end{claim*}
 
  \begin{proof}[Proof of the Claim]
  First, note that the assumption that $\max(\dom{s})\in S^{\kappa^+}_\kappa$ together with the fact that $s$ is continuous at all points of cofinality $\kappa$ in its domain imply  that 
   \begin{equation*}
    \begin{split}
      \max(\ran{s}) ~ & = ~ \sup\Set{\max(\ran{p})}{p\in\TTT(S), ~ p<_{\TTT(S)}s} \\ 
        &    \leq ~  \sup\Set{\max(c_p)}{p\in\TTT(S), ~ p<_{\TTT(S)}s}. 
      \end{split}
     \end{equation*}

    In the other direction, fix $p\in\TTT(S)$ with $p<_{\TTT(S)}s$. Then our assumptions imply that $\length{t_p}{\TTT}<\max(\ran{s})$ and the continuity of $s$ at $\max(\dom{s})$ allows us to find $r\in\TTT(S)$ that is $<_{\TTT(S)}$-minimal with the property that $r<_{\TTT(S)}s$ and $\length{t_p}{\TTT}<\max(\ran{r})$.
  Since $\max(\ran{p})\leq\length{t_p}{\TTT}$, we know that $p<_{\TTT(S)}r$.  
   Moreover, given $q\in\TTT(S)$ with $p\leq_{\TTT(S)}q<_{\TTT(S)}r$, we have  $\max(\ran{q})\leq\length{t_p}{\TTT}$ and this implies that $t_p=t_q$ and $c_p=c_q$.  
   This shows that $\max(\ran{r})>\length{t_q}{\TTT}$ holds for all $q\in\TTT(S)$ with $q<_{\TTT(S)}r$ and this allows us to  conclude that $$\max(c_p) ~ < ~ \length{t_r}{\TTT} ~ < ~ \max(\ran{s})$$ holds.  
 \end{proof}

 We   now  define $$c ~ = ~ \{\sup\Set{\max(c_p)}{p\in\TTT(S), ~ p<_{\TTT(S)}s}\} ~ \cup ~ \bigcup\Set{c_p}{p\in\TTT(S), ~ p<_{\TTT(S)}s}.$$ It is then easy to see that $c$ is a bounded and closed subset of $\kappa^+$. 
   Moreover, the above claim ensures that $S^{\kappa^+}_\kappa\cap c\subseteq S$ holds. 
   In particular, it follows that $c$ is a condition in $\PPP_S$ with $c\leq_{\PPP_S}c_p$ for all $p\in\TTT(S)$ with $p<_{\TTT(S)}s$.

  We can now pick $t_s\in\TTT$ and $c_s\in\PPP_S$ satisfying $$\length{t_s}{\TTT} ~ > ~ \sup\Set{\max(c_r)}{r\in\TTT(S), ~ r<_{\TTT(S)}s},$$ $t_r<_\TTT t_s$ for all $r\in\TTT(S)$ with $r<_{\TTT(S)}s$, $c_s\leq_{\PPP(S)}c$, $\length{t_s}{\TTT}\leq\max(c_s)$ and $c_s\Vdash_{\PPP_S}\anf{\check{t}_s\in\dot{B}}$. The pair $\langle t_s,c_s\rangle$ then satisfies the above statements. This completes our inductive construction.

 Finally, since $\length{s}{\TTT(S)}\leq\max(\ran{s})\leq\length{t_s}{\TTT}$ holds for all $s\in\TTT(S)$, there is a  function $\map{e}{\TTT(S)}{\TTT}$ with the property that for all $s\in\TTT(C)$, the set $e(s)$ is the unique element $t$ of $\TTT$ with $t\leq_\TTT t_s$ and $\length{s}{\TTT(S)}=\length{t}{\TTT}$. We can now conclude that the function $e$ witnesses that $\TTT(S)\preceq\TTT$ holds.  
\end{proof}

 We are now ready to relate the order-embeddability of trees to the complexity of restrictions of the club filter.

 \begin{lemma}\label{lemma:ComplexityFromTrees}
  Let $\kappa$ be an infinite regular cardinal, let $M$ be a stationary subset of $S^{\kappa^+}_\kappa$ and let $\TTT$ be a tree of height $\kappa^+$ with $[\TTT]=\emptyset$ and $\TTT(S^{\kappa^+}_\kappa\setminus S)\preceq\TTT$  for every subset $S$ of $M$ that is bistationary in $M$.  
  Then the set  $\NS{\kappa^+}\cap\POT{M}$ is definable by a $\Pi_1$-formula with parameters $\HH{\kappa^+}$, $M$, $S^{\kappa^+}_\kappa$ and $\TTT$. 
 \end{lemma}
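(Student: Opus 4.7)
The plan is to establish, for any $S \subseteq \kappa^+$, the equivalence ``$S \in \NS{\kappa^+} \cap \POT{M}$ if and only if $S \subseteq M$ and $\TTT(S^{\kappa^+}_\kappa \setminus S) \not\preceq \TTT$''. The right-hand side is readily seen to be $\Pi_1$ in the parameters $\HH{\kappa^+}$, $M$, $S^{\kappa^+}_\kappa$ and $\TTT$: the inclusion $S \subseteq M$ is bounded by $M$, and $\TTT(S^{\kappa^+}_\kappa \setminus S) \preceq \TTT$ is $\Sigma_1$, since the witnessing order-embedding is produced by an existential quantifier whose interior conditions (correct domain and range, preservation of strict order) are checked by quantification bounded in $\HH{\kappa^+}$, $S^{\kappa^+}_\kappa$, and $\TTT$.

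For the direction $(\Rightarrow)$, I start from $S \subseteq M$ non-stationary. Then $\kappa^+ \setminus S \in \Club{\kappa^+}$ and hence $S^{\kappa^+}_\kappa \setminus S \in \Club{S^{\kappa^+}_\kappa}$, so by Proposition~\ref{proposition:TreesTS}.\ref{item:TreeStat2} the tree $\TTT(S^{\kappa^+}_\kappa \setminus S)$ has height $\kappa^+$ and admits a cofinal branch $b$. If some $e$ witnessed $\TTT(S^{\kappa^+}_\kappa \setminus S) \preceq \TTT$, then the $e$-image of $b$ would be a strictly $<_\TTT$-increasing chain indexed by an ordinal of length $\kappa^+$, forcing the $\TTT$-levels of its members to be cofinal in $\kappa^+$; its downward $<_\TTT$-closure would then be a cofinal branch of $\TTT$, contradicting $[\TTT] = \emptyset$.

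For the converse I argue contrapositively: assuming $S \subseteq M$ is stationary, I will construct an embedding $\TTT(S^{\kappa^+}_\kappa \setminus S) \preceq \TTT$. If $M \setminus S$ is also stationary, then $S$ is bistationary in $M$ and the hypothesis applies directly. Otherwise $M \setminus S \in \NS{\kappa^+}$, and I invoke Solovay's theorem to split $M = M_0 \sqcup M_1$ into two disjoint stationary sets and set $S' = M_0 \cap S$. Because $M_0 \setminus S \subseteq M \setminus S$ is non-stationary, the set $S' = M_0 \setminus (M_0 \setminus S)$ is stationary, while $M_1 \subseteq M \setminus S'$ ensures $M \setminus S'$ is stationary, so $S'$ is bistationary in $M$. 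The hypothesis then yields $\TTT(S^{\kappa^+}_\kappa \setminus S') \preceq \TTT$, and since $S' \subseteq S$ gives the tree-inclusion $\TTT(S^{\kappa^+}_\kappa \setminus S) \subseteq \TTT(S^{\kappa^+}_\kappa \setminus S')$, the identity serves as an order-preserving witness of $\TTT(S^{\kappa^+}_\kappa \setminus S) \preceq \TTT(S^{\kappa^+}_\kappa \setminus S')$, and composition produces the desired embedding into $\TTT$.

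The principal obstacle is the degenerate subcase in which $M \setminus S$ is non-stationary. The hypothesis supplies an embedding only for $S$ bistationary in $M$, so it cannot be invoked on $S$ directly once $M \setminus S$ is negligible; the Solovay splitting reduces this situation to a genuine bistationary subset $S' \subseteq S$, and the monotonicity of $A \mapsto \TTT(A)$ under $\subseteq$ is exactly what allows the embedding to be transferred back from $S'$ to $S$. The remaining bookkeeping, in particular the verification of the complexity count for $\preceq$ relative to the allowed parameters, is routine.
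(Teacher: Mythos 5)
Your proof is correct, and its overall strategy coincides with the paper's: reduce the $\Pi_1$-definability of $\NS{\kappa^+}\cap\POT{M}$ to an equivalence, for $S\subseteq M$, between non-stationarity of $S$ and the failure of a $\Sigma_1$-expressible embeddability condition, using Proposition \ref{proposition:TreesTS}.\ref{item:TreeStat2} together with $[\TTT]=\emptyset$ for one direction and the bistationarity hypothesis for the other. The one genuine divergence is your treatment of the degenerate case where $S$ is stationary but $M\setminus S$ is not, so that the hypothesis cannot be applied to $S$ itself. The paper absorbs this case into the defining formula, taking the complement of $\NS{\kappa^+}$ in $\POT{M}$ to be the $\Sigma_1$ set $\Set{A\subseteq M}{\textit{$A\in\Club{M}$ or $\TTT(S^{\kappa^+}_\kappa\setminus A)\preceq\TTT$}}$, so that such $A$ are caught by the first disjunct. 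You instead show that the embeddability conclusion extends from bistationary to arbitrary stationary $S\subseteq M$: split $M$ into two disjoint stationary pieces (Solovay), pass to the bistationary set $S'=M_0\cap S\subseteq S$, and use that $A\mapsto\TTT(A)$ is antitone in the complement, so that the identity embeds $\TTT(S^{\kappa^+}_\kappa\setminus S)$ into $\TTT(S^{\kappa^+}_\kappa\setminus S')$, which in turn embeds into $\TTT$. This buys a slightly cleaner defining formula (no $\Club{M}$ disjunct) and actually establishes the stronger fact that the hypothesis already yields $\TTT(S^{\kappa^+}_\kappa\setminus S)\preceq\TTT$ for every stationary $S\subseteq M$, at the cost of invoking the splitting theorem, which the paper's disjunction avoids entirely. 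Both routes yield the same parameters and the same complexity count.
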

 
 \begin{proof}
  Define $\calA$ to be the collection of all subsets $A$ of $M$ with the property   that either $A\in\Club{M}$  or $\TTT(S^{\kappa^+}_\kappa\setminus A)\preceq\TTT$ holds.

  \begin{claim*}
   $\calA=\POT{M}\setminus\NS{\kappa^+}$. 
  \end{claim*}
  
  \begin{proof}[Proof of the Claim]
   First, assume that $A$ is an element of $\NS{\kappa^+}\cap\POT{M}$. 
   Then $S^{\kappa^+}_\kappa\setminus A\in\Club{S^{\kappa^+}_\kappa}$ and the fact that  $M$ is a stationary subset of $S^{\kappa^+}_\kappa$ ensures that $A$ is not an element of $\Club{M}$.  
   Moreover, Proposition \ref{proposition:TreesTS}.\ref{item:TreeStat2} implies that $\TTT(S^{\kappa^+}_\kappa\setminus A)$ is a tree of height $\kappa^+$ with $[\TTT(S^{\kappa^+}_\kappa\setminus A)]\neq\emptyset$. By our assumptions on $\TTT$, this shows that $\TTT(S^{\kappa^+}_\kappa\setminus A)\preceq\TTT$ does not hold. We can now conclude that $A\notin\calA$.

    Now, assume that $A$ is a subset of $M$ that is not contained in $\NS{\kappa^+}$. If $A$ is an element of $\Club{M}$, then $A$ is an element of $\calA$. 
    Therefore, we may assume that $M\setminus A$ is stationary. In this situation, our assumptions imply that $\TTT(S^{\kappa^+}_\kappa\setminus A)\preceq\TTT$  and we know that $A\in\calA$.  
   \end{proof}

  The statement of the lemma now follows directly from the above claim, because   the set  $\calA$ is obviously definable by a $\Sigma_1$-formula with parameters $\HH{\kappa^+}$, $M$, $S^{\kappa^+}_\kappa$ and $\TTT$. 
 \end{proof}

 \begin{corollary}\label{corollary:CanaryFromDelta}
  Let $\kappa$ be an infinite regular cardinal with $2^\kappa=\kappa^+$. If there exists a tree of cardinality and height $\kappa^+$  with $[\TTT]=\emptyset$ and the property that $\TTT(S)\preceq\TTT$ holds for every subset $S$ of $S^{\kappa^+}_\kappa$ that is   bistationary in $S^{\kappa^+}_\kappa$, then $\Club{S^{\kappa^+}_\kappa}$ is a $\mathbf{\Delta}_1$-subset of $\POT{\kappa^+}$. 
  In particular, if $\kappa$ is an infinite cardinal with $\kappa^{{<}\kappa}=\kappa$ and $2^\kappa=\kappa^+$, then the existence of a $\kappa$-canary tree of cardinality $\kappa^+$ implies that $\Club{S^{\kappa^+}_\kappa}$ is a $\mathbf{\Delta}_1$-subset of $\POT{\kappa^+}$. 
\end{corollary}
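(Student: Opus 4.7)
The plan is to read off the first assertion from Lemma~\ref{lemma:ComplexityFromTrees} applied with $M=S^{\kappa^+}_\kappa$, after translating between the club filter restriction and the corresponding restriction of the non-stationary ideal. The ``in particular'' clause is then immediate from Lemma~\ref{lemma:CanaryEmb}.

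First I would verify the hypothesis of Lemma~\ref{lemma:ComplexityFromTrees} for $M=S^{\kappa^+}_\kappa$: it asks that $\TTT(S^{\kappa^+}_\kappa\setminus S)\preceq\TTT$ for every $S\subseteq S^{\kappa^+}_\kappa$ that is bistationary in $S^{\kappa^+}_\kappa$. Since the complementation map $S\mapsto S^{\kappa^+}_\kappa\setminus S$ preserves bistationarity, this is just a restatement of the corollary's assumption. The lemma therefore provides a $\Pi_1$-definition of $\NS{\kappa^+}\cap\POT{S^{\kappa^+}_\kappa}$ using the parameters $\HH{\kappa^+}$, $S^{\kappa^+}_\kappa$ and $\TTT$. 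The assumption $2^\kappa=\kappa^+$ enters precisely at this point: it gives $|\HH{\kappa^+}|=2^\kappa=\kappa^+$, so that $\HH{\kappa^+}$ itself has hereditary cardinality at most $\kappa^+$, and $S^{\kappa^+}_\kappa$ together with the tree $\TTT$ (which has cardinality $\kappa^+$ by hypothesis) lie in $\HH{\kappa^{++}}$ as well.

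Next I would translate the above to a statement about $\Club{S^{\kappa^+}_\kappa}$. Unfolding the definitions, for any $A\subseteq\kappa^+$ one verifies that
\[
A\in\Club{S^{\kappa^+}_\kappa}\ \Longleftrightarrow\ S^{\kappa^+}_\kappa\setminus A\in\NS{\kappa^+}\cap\POT{S^{\kappa^+}_\kappa},
\]
and the right-hand side is $\Pi_1$ in $A$ using $S^{\kappa^+}_\kappa$ as an additional parameter (the map $A\mapsto S^{\kappa^+}_\kappa\setminus A$ being $\Sigma_0$-definable in $S^{\kappa^+}_\kappa$). This yields a $\Pi_1$-definition of $\Club{S^{\kappa^+}_\kappa}$ with parameters in $\HH{\kappa^{++}}$, and combined with the trivial $\Sigma_1$-definition of $\Club{S^{\kappa^+}_\kappa}$ from parameter $S^{\kappa^+}_\kappa$ this establishes that $\Club{S^{\kappa^+}_\kappa}$ is a $\mathbf{\Delta}_1$-subset of $\POT{\kappa^+}$.

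For the ``in particular'' clause I would invoke Lemma~\ref{lemma:CanaryEmb}: under $\kappa^{{<}\kappa}=\kappa$, the ``moreover'' part of that lemma shows that any $\kappa$-canary tree $\TTT$ satisfies $\TTT(S)\preceq\TTT$ for every $S\subseteq S^{\kappa^+}_\kappa$ bistationary in $S^{\kappa^+}_\kappa$; by definition $\TTT$ has height $\kappa^+$ with $[\TTT]=\emptyset$, and by assumption it has cardinality $\kappa^+$. The first part of the corollary then applies. I do not expect any real obstacle here: the substantive work is already packaged into Lemmas~\ref{lemma:CanaryEmb} and~\ref{lemma:ComplexityFromTrees}, and what remains is a parameter-count together with a single $\Sigma_0$ complementation step.
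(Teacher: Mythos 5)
Your proposal is correct and follows the paper's own argument essentially verbatim: apply Lemma~\ref{lemma:ComplexityFromTrees} with $M=S^{\kappa^+}_\kappa$ (noting, as you do, that complementation inside $S^{\kappa^+}_\kappa$ preserves bistationarity, so the lemma's hypothesis is a restatement of the corollary's), pass from $\NS{\kappa^+}\cap\POT{S^{\kappa^+}_\kappa}$ to $\Club{S^{\kappa^+}_\kappa}$ by a $\Sigma_0$ complementation, count parameters using $2^\kappa=\kappa^+$, and obtain the second clause from Lemma~\ref{lemma:CanaryEmb}. The only point you gloss over is that a tree of cardinality $\kappa^+$ need not literally be an element of $\HH{\kappa^{++}}$ --- one should first replace $\TTT$ by an isomorphic copy with underlying set contained in $\kappa^+$ (the relation $\preceq$, the height and the absence of cofinal branches being isomorphism-invariant), which the paper dispatches with a one-line ``without loss of generality''.
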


\begin{proof}
 Without loss of generality, we may assume that $\TTT$ is an element of $\HH{\kappa^{++}}$. 
 An application of Lemma \ref{lemma:ComplexityFromTrees} with $M=S^{\kappa^+}_\kappa$ shows that the set $\NS{\kappa^+}\cap\POT{S^{\kappa^+}_\kappa}$ is definable by a $\Pi_1$-formula with parameters $\HH{\kappa^+}$, $M$, $S^{\kappa^+}_\kappa$ and $\TTT$. 
 Since a subset $A$ of $\kappa^+$ is an element of $\Club{S^{\kappa^+}_\kappa}$ if and only if $S^{\kappa^+}_\kappa\setminus A$ is  an element of $\NS{\kappa^+}\cap\POT{S^{\kappa^+}_\kappa}$, it follows that the set $\Club{S^{\kappa^+}_\kappa}$  is also definable by a $\Pi_1$-formula with parameters $\HH{\kappa^+}$, $M$, $S^{\kappa^+}_\kappa$ and $\TTT$. 
 Finally, our assumptions imply that all of these parameters are elements of $\HH{\kappa^{++}}$ and hence we can conclude that $\Club{S^{\kappa^+}_\kappa}$ is a $\mathbf{\Delta}_1$-subset of $\POT{\kappa}$.

  Now, assume that $\kappa$ is an infinite cardinal with $\kappa^{{<}\kappa}=\kappa$, $2^\kappa=\kappa^+$ and the property that there exists a $\kappa$-canary tree $\TTT$ of cardinality $\kappa^+$. Then $\kappa$ is regular and Lemma \ref{lemma:CanaryEmb} ensures that $\TTT(S)\preceq\TTT$ holds for all   subsets $S$ of $S^{\kappa^+}_\kappa$ that are bistationary in $S^{\kappa^+}_\kappa$. The first part of the corollary then allows us to conclude that $\Club{S^{\kappa^+}_\kappa}$ is a $\mathbf{\Delta}_1$-subset of $\POT{\kappa^+}$. 
\end{proof}

 \begin{remark}
   Using the \emph{Boundedness Lemma for uncountable regular cardinals} (see {\cite[Corollary 13]{MR1242054}} for $\kappa=\omega_1$ and {\cite[Lemma 8.1]{MR2987148}} for the direct generalization to higher regular cardinals), it is possible to show that the converse of the implication of Corollary \ref{corollary:CanaryFromDelta} also holds true, {i.e.,} if $\kappa$ is an infinite regular cardinal such that $2^\kappa=\kappa^+$ and $\Club{S^{\kappa^+}_\kappa}$ is a $\mathbf{\Delta}_1$-subset of $\POT{\kappa^+}$, then there is a tree $\TTT$ of cardinality and height $\kappa^+$ with the property that $\TTT(S)\preceq\TTT$ holds for every  subset $S$ of $S^{\kappa^+}_\kappa$ that is bistationary in $S^{\kappa^+}_\kappa$.  
\end{remark}

 The main results of \cite{MR1877015} and \cite{MR1222536} now show that, if $\kappa$ is an infinite cardinal satisfying $\kappa^{{<}\kappa}=\kappa$ and $2^\kappa=\kappa^+$, then there exists a cofinality-preserving  forcing that also preserves these cardinal arithmetic assumptions on $\kappa$ and adds a $\kappa$-canary tree of cardinality $\kappa^+$. By the second part of Corollary \ref{corollary:CanaryFromDelta}, this means that this forcing causes $\Club{S^{\kappa^+}_\kappa}$ to be a $\mathbf{\Delta}_1$-subset of $\POT{\kappa^+}$. 
  In the following, we will present work contained in \cite{MR4469807}   providing a detailed analysis of the forcing notion constructed in \cite{MR1877015} that leads to a strengthening of the main results of  \cite{MR1877015} and \cite{MR1222536}. 
  In particular, we will be able to relax the cardinal arithmetic assumptions on the given cardinal $\kappa$ and derive  strong closure properties of the constructed partial order.  This analysis is based on the following concept from Shelah's work on cardinal arithmetic:

 \begin{definition}[Shelah]
  Let $\kappa$ be an infinite regular cardinal. 
  \begin{enumerate}
   \item An ordinal $\gamma<\kappa^+$ is \emph{approachable} with respect to a sequence $\seq{z_\alpha}{\alpha<\kappa^+}$   of elements of $[\kappa^+]^{{<}\kappa}$  if there exists a cofinal sequence $\vec{\alpha} = \seq{\alpha_\xi}{\xi<\cof{\gamma}}$  in $\gamma$ such that every proper initial segment of $\vec{\alpha}$ is equal to $z_\alpha$ for some $\alpha<\gamma$.  
   
   \item The \emph{Approachability ideal $I[\kappa^+]$ on $\kappa^+$} is the (possibly non-proper) normal ideal generated by sets of the form $$A_{\vec{z}} ~ =  ~ \Set{\gamma<\kappa^+}{\textit{$\gamma$ is approachable with respect to $\vec{z}$ }}$$ for some sequence $\vec{z}\in{}^{\kappa^+}([\kappa^+]^{{<}\kappa})$. 
  \end{enumerate}
 \end{definition}

 \begin{lemma}[\cite{MR2160657}]\label{lemma:IA}
   Let $\kappa$ be an infinite regular cardinal with $\kappa^{{<}\kappa}\leq\kappa^+$, let $\seq{z_\alpha}{\alpha<\kappa^+}$ be an enumeration of $[\kappa^+]^{{<}\kappa}$ and set $$M_{\vec{z}} ~ =  ~ \Set{\gamma\in S^{\kappa^+}_\kappa}{\textit{$\gamma$ is approachable with respect to $\vec{z}$ }}.$$  Then the following statements hold: 
   \begin{enumerate}
     
     \item $M_{\vec{z}}\in I[\kappa^+]$. 

     \item $M_{\vec{z}}$ is a maximum element of $I[\kappa^+]\cap\POT{S^{\kappa^+}_\kappa} ~ mod ~  \mathcal{NS}$, in the sense that whenever $S$ is a stationary subset of $S^{\kappa^+}_\kappa$ with $S\in I[\kappa^+]$, then $S\setminus M_{\vec{z}}$ is non-stationary. 
     
     
     \item\label{item:IA3} If $\kappa^{{<}\kappa}=\kappa$, then $S^{\kappa^+}_\kappa\in I[\kappa^+]$. 
   \end{enumerate}
\end{lemma}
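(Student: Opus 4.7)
The plan is to handle the three parts in order, with the main content concentrated in the maximality statement, from which the other two parts follow with the aid of the cardinal arithmetic hypotheses.

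The first part is immediate: by definition of $I[\kappa^+]$, the set $A_{\vec{z}}$ is a generator, and $M_{\vec{z}} = A_{\vec{z}} \cap S^{\kappa^+}_\kappa$ is therefore also in $I[\kappa^+]$.

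For the second part, I would first establish the following comparison claim: for every sequence $\vec{w} \in {}^{\kappa^+}([\kappa^+]^{{<}\kappa})$, the set $(A_{\vec{w}} \cap S^{\kappa^+}_\kappa) \setminus M_{\vec{z}}$ is non-stationary. Granting this, if $S \subseteq S^{\kappa^+}_\kappa$ is stationary with $S \in I[\kappa^+]$, then by the construction of the ideal there exist a sequence $\vec{w}$ and a non-stationary set $N$ with $S \subseteq A_{\vec{w}} \cup N$, and the claim then forces $S \setminus M_{\vec{z}}$ to be non-stationary. To prove the comparison, use the enumeration to define $\map{h}{[\kappa^+]^{{<}\kappa}}{\kappa^+}$ by letting $h(x)$ be the unique $\alpha$ with $z_\alpha = x$. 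The hypothesis $\kappa^{{<}\kappa} \leq \kappa^+$ ensures $|[\kappa^+]^{{<}\kappa}| = \kappa^+$ so that $h$ lands in $\kappa^+$. Let $C$ be the set of $\gamma < \kappa^+$ closed under both $\alpha \mapsto \sup(w_\alpha) + 1$ and $\alpha \mapsto h(w_\alpha) + 1$, which is a club. For $\gamma \in A_{\vec{w}} \cap S^{\kappa^+}_\kappa \cap C$, pick a cofinal sequence $\seq{\alpha_\xi}{\xi < \kappa}$ in $\gamma$ whose proper initial segments are of the form $w_{\beta(\xi)}$ for some $\beta(\xi) < \gamma$; closure of $\gamma$ under $\alpha \mapsto h(w_\alpha)$ gives $h(w_{\beta(\xi)}) < \gamma$, and $z_{h(w_{\beta(\xi)})} = w_{\beta(\xi)}$ realizes the same initial segment as an element of $\vec{z}$ indexed below $\gamma$. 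Hence $\gamma \in M_{\vec{z}}$.

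For the third part, the stronger hypothesis $\kappa^{{<}\kappa} = \kappa$ gives $|[\alpha]^{{<}\kappa}| \leq \kappa$ for every $\alpha < \kappa^+$, so the map $\alpha \mapsto \sup h([\alpha]^{{<}\kappa}) + 1$ defines a function from $\kappa^+$ to $\kappa^+$ whose closure points form a club $C$. For $\gamma \in S^{\kappa^+}_\kappa \cap C$, every proper initial segment of any cofinal $\kappa$-sequence in $\gamma$ is an element of $[\gamma]^{{<}\kappa}$ and is therefore enumerated by $\vec{z}$ below $\gamma$, so $\gamma \in M_{\vec{z}}$. Thus $S^{\kappa^+}_\kappa \cap C \subseteq M_{\vec{z}}$, and since $M_{\vec{z}} \in I[\kappa^+]$ and non-stationary sets lie in $I[\kappa^+]$, we conclude $S^{\kappa^+}_\kappa \in I[\kappa^+]$.

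The main obstacle is keeping the cardinal arithmetic bookkeeping straight when verifying the club-ness of the relevant sets. The weaker assumption $\kappa^{{<}\kappa} \leq \kappa^+$ used for the first two parts only guarantees that $h$ lands in $\kappa^+$ and gives no control over individual $|[\alpha]^{{<}\kappa}|$, so the argument must be routed through the $\vec{w}$-indexed initial segments rather than through all of $[\gamma]^{{<}\kappa}$; the third part needs the sharper bound $\kappa^{{<}\kappa} = \kappa$ precisely to close off this stronger route.
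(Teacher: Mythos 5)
The paper does not actually prove this lemma --- it is quoted from \cite{MR2160657} without proof --- so there is no in-paper argument to compare against. Your proposal is correct and is essentially the standard argument from the literature: part (1) by downward closure from the generator $A_{\vec{z}}$, part (2) by a club of ordinals closed under the ``re-indexing'' map $h$ that transports $\vec{w}$-approachability to $\vec{z}$-approachability, and part (3) by using $\kappa^{{<}\kappa}=\kappa$ to close off under \emph{all} of $[\alpha]^{{<}\kappa}$ at once. The cardinal-arithmetic bookkeeping is right: $\kappa^{{<}\kappa}\leq\kappa^+$ gives $(\kappa^+)^{{<}\kappa}=\kappa^+$ (so the enumeration and $h$ exist), while the sharper hypothesis in part (3) is exactly what makes $\alpha\mapsto\sup h([\alpha]^{{<}\kappa})+1$ a map into $\kappa^+$.

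The one sentence I would tighten is the bridge in part (2) from ``$S\in I[\kappa^+]$'' to ``$S\subseteq A_{\vec{w}}\cup N$ for a \emph{single} $\vec{w}$ and nonstationary $N$.'' Since $I[\kappa^+]$ is defined as the normal ideal \emph{generated} by the sets $A_{\vec{w}}$, this single-generator characterization is not literally the definition: a priori an element of the generated normal ideal is only reached after (iterated) diagonal unions. It is true, but it needs the standard interleaving observation that the family $\Set{A_{\vec{w}}}{\vec{w}\in{}^{\kappa^+}([\kappa^+]^{{<}\kappa})}$ is closed under diagonal unions modulo clubs. Alternatively, you can bypass it entirely: your comparison claim shows that $$J ~ = ~ \Set{S\subseteq\kappa^+}{(S\cap S^{\kappa^+}_\kappa)\setminus M_{\vec{z}}\in\NS{\kappa^+}}$$ is a normal ideal containing every generator $A_{\vec{w}}$, hence $I[\kappa^+]\subseteq J$, which is precisely statement (2). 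Either fix is a one-line addition; with it, the proof is complete.
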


 Using the above notions and results, the work presented in \cite{MR4469807} leads to the following strengthening of the main results of \cite{MR1877015} and \cite{MR1222536}:

\begin{theorem}[\cite{MR4469807}]\label{theorem:CLcanary}
 Given an infinite regular cardinal $\kappa$, there is a partial order $\PPP$ with the following properties: 
 \begin{enumerate}
   \item $\PPP$ is ${<}\kappa^+$-directed closed and satisfies the $(2^\kappa)^+$-chain condition. 
   
   \item If $G$ is $\PPP$-generic over $V$, then, in $V[G]$, there is a subtree $\TTT$ of ${}^{{<}\kappa^+}\kappa^+$  of height $\kappa^+$ with $[\TTT]=\emptyset$ such that the following statements hold: 
      \begin{enumerate}
       \item If $S$ is a subset of $S^{\kappa^+}_\kappa$ that is bistationary in $S^{\kappa^+}_\kappa$ and the set  $S^{\kappa^+}_\kappa\setminus S$ contains a stationary set in $I[\kappa^+]$, then $\TTT(S)\preceq\TTT$. 
             
       \item  If  $M\in V$ is a maximum element of $I[\kappa^+]\cap\POT{S^{\kappa^+}_\kappa} ~ mod ~ \mathcal{NS}$ in $V$ and $\kappa^{{<}\kappa}\leq\kappa^+$ holds in $V$, then the following statements hold in $V[G]$: 
         \begin{enumerate}
           \item $M$ is a maximum element of $I[\kappa^+]\cap\POT{S^{\kappa^+}_\kappa} ~ mod ~ \mathcal{NS}$. 
           
           \item If $S$ is a subset of $S^{\kappa^+}_\kappa$ that is bistationary in $S^{\kappa^+}_\kappa$ and $M\setminus S$ is stationary, then  $\TTT(S)\preceq\TTT$.  
         \end{enumerate}
      \end{enumerate}
 \end{enumerate}
\end{theorem}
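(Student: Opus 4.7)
The plan is to define $\PPP$ as a forcing whose generic filter simultaneously builds the tree $\TTT\subseteq{}^{{<}\kappa^+}\kappa^+$ and, for each eligible bistationary $S$, the required embedding $\TTT(S)\preceq\TTT$. Conditions would be tuples $p=\langle\alpha_p,t_p,\vec{e}^{\,p}\rangle$ with $\alpha_p<\kappa^+$ a successor ordinal, $t_p$ a downward-closed subtree of ${}^{{\le}\alpha_p}\kappa^+$ of cardinality less than $\kappa^+$, and $\vec{e}^{\,p}$ an assignment $\map{e^p_S}{\TTT(S)\cap{}^{{\le}\alpha_p}\kappa^+}{t_p}$ of strict order-embeddings indexed by at most $\kappa$ subsets $S$ of $S^{\kappa^+}_\kappa$ satisfying the hypothesis of~(2)(a); extension is end-extension of $t_p$ together with componentwise extension of each $e^p_S$.

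The first clause of the theorem would follow by standard means. Given fewer than $\kappa^+$-many pairwise directed conditions, I form componentwise unions and extend the resulting tree to the next successor level by placing distinct fresh ordinals above each cofinal branch through the union; this yields ${<}\kappa^+$-directed closure. Since each condition has hereditary size at most $2^\kappa$, a $\Delta$-system argument on tree parts combined with direct amalgamation of compatible embeddings produces the $(2^\kappa)^+$-chain condition.

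The main obstacle will be to prove $[\TTT]=\emptyset$ in $V[G]$. I would follow a fusion-style strategy: assuming $p\Vdash\anf{\dot{b}\in[\dot{\TTT}]}$, construct below $p$ a descending $\kappa^+$-sequence of conditions $p_\xi$ each deciding a longer initial segment $\dot{b}\restriction\beta_\xi$, taking directed-closure limits. At a suitably reflective level $\delta$ of cofinality $\kappa$ supplied by the approachability structure of Lemma~\ref{lemma:IA}, the partial branch $\bigcup_\xi (\dot{b}\restriction\beta_\xi)$ decided so far would be a ground-model object lying through the union of the $t_{p_\xi}$; the design of the conditions at step $\delta$ then allows two incompatible extensions of the limit condition, each forcing $\dot{b}$ through a different node at level $\delta$, producing the required contradiction.

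For clause~(2)(a), given an eligible $S$ with $A\subseteq S^{\kappa^+}_\kappa\setminus S$ stationary in $I[\kappa^+]$ and approachability sequence $\vec{z}$, genericity delivers $\TTT(S)\preceq\TTT$ once the density of extensions of any $e^p_S$ is verified. The only delicate case is extending $e^p_S$ past a level $\gamma\in A$ of cofinality $\kappa$: approachability supplies a cofinal sequence in $\gamma$ all of whose proper initial segments appear earlier in $\vec{z}$, which allows the embedding values chosen below $\gamma$ to be amalgamated coherently along this sequence into a consistent extension at level $\gamma$ without inflating $t_p$ past cardinality $<\kappa^+$. Clause~(2)(b) then drops out because ${<}\kappa^+$-directed closure prevents $\PPP$ from enlarging $I[\kappa^+]\cap\POT{S^{\kappa^+}_\kappa}$ modulo $\mathcal{NS}$, so $M$ retains its maximality in $V[G]$; the stationarity of $M\setminus S$ there then furnishes a stationary subset of $S^{\kappa^+}_\kappa\setminus S$ in $I[\kappa^+]$, and the argument of~(2)(a) applies.
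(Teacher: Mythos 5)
Note first that this survey only quotes Theorem~\ref{theorem:CLcanary} from \cite{MR4469807} and contains no proof of it, so your proposal has to be judged on its own terms and against the known construction (a refinement of the Hyttinen--Rautila forcing). Judged that way, the central difficulty of the theorem is not addressed: your conditions carry embeddings $e^p_S$ only for sets $S$ lying in the ground model, so genericity can at best deliver $\TTT(S)\preceq\TTT$ for $S\in V$. But clause~(2)(a) is evaluated in $V[G]$ and quantifies over \emph{all} bistationary $S\in V[G]$ whose complement contains a stationary set in $I[\kappa^+]^{V[G]}$. The forcing is $(2^\kappa)^+$-cc and far from $\kappa^{++}$-distributive, so it adds many new subsets of $\kappa^+$; when, say, $\kappa^{{<}\kappa}=\kappa$, Lemma~\ref{lemma:IA} gives $S^{\kappa^+}_\kappa\in I[\kappa^+]$ and then \emph{every} new bistationary subset of $S^{\kappa^+}_\kappa$ falls under the hypothesis of~(2)(a). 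The monotonicity $S\subseteq S'\Rightarrow\TTT(S)\preceq\TTT(S')$ lets you reduce to $S=S^{\kappa^+}_\kappa\setminus A$ with $A$ stationary in $I[\kappa^+]^{V[G]}$, but such $A$ need not be covered modulo $\mathcal{NS}$ by ground-model sets, so the passage from ground-model $S$ to arbitrary $S$ is exactly the heart of the matter and is missing from your argument.

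Two further steps would fail as written. For $[\TTT]=\emptyset$: after forming the limit $p_\delta$ of your descending sequence, exhibiting two incompatible extensions of $p_\delta$ forcing $\dot{b}$ through different nodes at level $\delta$ only shows that $p_\delta$ does not decide $\dot{b}(\delta)$; it is not a contradiction with $p\Vdash\anf{\dot{b}\in[\dot{\TTT}]}$. One needs a mechanism forcing every continuation of the decided part of $\dot{b}$ out of $\TTT$ (in the known construction this comes from tying $\TTT$ to the trees $\TTT(S)$, so that a cofinal branch of $\TTT$ would yield a club through $S\cup S^{\kappa^+}_{{<}\kappa}$ for a costationary $S$). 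For the chain condition: fix one $S$, a tree part $t$ with two nodes and $\alpha_p=1$. The first level of $\TTT(S)$ is an antichain of size $\kappa^+$, so every one of the $2^{\kappa^+}$ functions from it into $t$ is a strict order-embedding, and any two distinct ones give incompatible conditions, since a common extension must extend both componentwise. Whenever $2^\kappa=\kappa^+$ this is an antichain of size $(2^\kappa)^+$, so your poset does not satisfy the claimed chain condition; repairing this by making the $e^p_S$ partial of size $\le\kappa$ forces you to redo the density and limit arguments. By contrast, your treatment of clause~(2)(b) --- preservation of the maximality of $M$ under ${<}\kappa^+$-closed forcing given $\kappa^{{<}\kappa}\leq\kappa^+$, followed by a reduction to~(2)(a) --- is essentially the right argument.
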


The conclusions of the above theorem enable  us to give a detailed analysis of the complexities of various restrictions  of the club filter  in the constructed forcing extension:

\begin{corollary}\label{corollary:ComplexityForcingNS}
  Let $\kappa$ be an infinite regular cardinal satisfying $\kappa^{{<}\kappa}\leq\kappa^+$, let $\PPP$ be the partial order given by Theorem \ref{theorem:CLcanary} and let $M$ be a maximum element of $I[\kappa^+]\cap\POT{S^{\kappa^+}_\kappa} ~ mod ~  \mathcal{NS}$. If $G$ is $\PPP$-generic over $V$, then, in $V[G]$, the set $\NS{\kappa^+}\cap\POT{M}$ is definable by a $\Pi_1$-formula with parameters in $\HH{(2^\kappa)^+}$. 
\end{corollary}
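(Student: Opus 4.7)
The plan is to combine Theorem \ref{theorem:CLcanary} with the definability criterion of Lemma \ref{lemma:ComplexityFromTrees}, applied to the stationary set $M$. I work throughout in $V[G]$. Theorem \ref{theorem:CLcanary}(2) produces a subtree $\TTT$ of ${}^{{<}\kappa^+}\kappa^+$ of height $\kappa^+$ with $[\TTT]=\emptyset$. Since $M\in V$ and $V$ satisfies $\kappa^{{<}\kappa}\leq\kappa^+$, clause (2)(b)(i) of that theorem ensures that $M$ is still a maximum element of $I[\kappa^+]\cap\POT{S^{\kappa^+}_\kappa}~\mathrm{mod}~\mathcal{NS}$ in $V[G]$, and in particular a stationary subset of $S^{\kappa^+}_\kappa$ there.

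The key step is to verify the hypothesis of Lemma \ref{lemma:ComplexityFromTrees} for this $M$ and $\TTT$. Fix any subset $S$ of $M$ that is bistationary in $M$, and set $S'=S^{\kappa^+}_\kappa\setminus S$. Then $S'$ is bistationary in $S^{\kappa^+}_\kappa$: its complement $S^{\kappa^+}_\kappa\setminus S'=S$ is stationary since $S$ is bistationary in the stationary set $M$, and $S'\supseteq M\setminus S$ is stationary because $M\setminus S$ is stationary. Moreover $M\setminus S'=M\cap S=S$ is stationary. Therefore clause (2)(b)(ii) of Theorem \ref{theorem:CLcanary}, applied with $S'$ in place of $S$, yields $\TTT(S^{\kappa^+}_\kappa\setminus S)=\TTT(S')\preceq\TTT$, which is precisely the condition required by Lemma \ref{lemma:ComplexityFromTrees}.

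Lemma \ref{lemma:ComplexityFromTrees} now provides a $\Pi_1$-definition of $\NS{\kappa^+}\cap\POT{M}$ with the parameters $\HH{\kappa^+}$, $M$, $S^{\kappa^+}_\kappa$ and $\TTT$. It remains to verify that each parameter lies in $\HH{(2^\kappa)^+}$ as computed in $V[G]$. The subsets $M$ and $S^{\kappa^+}_\kappa$ of $\kappa^+$ have hereditary cardinality at most $\kappa^+\leq 2^\kappa$, and $\HH{\kappa^+}$ has cardinality $2^\kappa$. For the tree, the ${<}\kappa^+$-directed closure of $\PPP$ implies that ${}^{{<}\kappa^+}\kappa^+$ is unchanged from $V$ to $V[G]$, and the assumption $\kappa^{{<}\kappa}\leq\kappa^+$ gives $|{}^{{<}\kappa^+}\kappa^+|=(\kappa^+)^\kappa\leq 2^\kappa$; since $\TTT$ is a subtree of this set, $\TTT\in\HH{(2^\kappa)^+}$. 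I expect this last cardinal-arithmetic bookkeeping to be the only delicate point of the argument; the structural content is entirely supplied by Theorem \ref{theorem:CLcanary} and Lemma \ref{lemma:ComplexityFromTrees}.
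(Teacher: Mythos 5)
Your proposal is correct and follows essentially the same route as the paper: pass from a set $S\subseteq M$ bistationary in $M$ to $S^{\kappa^+}_\kappa\setminus S$, check it meets the hypotheses of clause (2)(b)(ii) of Theorem \ref{theorem:CLcanary}, and then apply Lemma \ref{lemma:ComplexityFromTrees} with parameters $\HH{\kappa^+}$, $M$, $S^{\kappa^+}_\kappa$ and $\TTT$, all of which lie in $\HH{(2^\kappa)^+}$. Your cardinal-arithmetic bookkeeping for $\TTT$ is slightly more explicit than the paper's, which simply asserts $\TTT\in\HH{(2^\kappa)^+}^{V[G]}$, but the content is the same.
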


\begin{proof}
   Let $\TTT\in V[G]$ be the tree given by Theorem \ref{theorem:CLcanary}. Then $\TTT\in\HH{(2^\kappa)^+}^{V[G]}$. 
   Work in $V[G]$ and fix  $S\subseteq M$  bistationary in $M$.  Then $M\setminus(S^{\kappa^+}_\kappa\setminus S)$ is stationary and $S^{\kappa^+}_\kappa\setminus S$ is  bistationary in $S^{\kappa^+}_\kappa$. Hence, Theorem \ref{theorem:CLcanary} implies $\TTT(S^{\kappa^+}_\kappa\setminus S)\preceq\TTT$. An application of Lemma \ref{lemma:ComplexityFromTrees} now shows that $\NS{\kappa^+}\cap\POT{M}$ is definable by a $\Pi_1$-formula with parameters $\HH{\kappa^+}$, $M$, $S^{\kappa^+}_\kappa$ and $\TTT$. This conclusion now directly implies the statement of the lemma, because all of these parameters are contained in $\HH{(2^\kappa)^+}$.  
\end{proof}

We now continue by showing how the main results of  \cite{MR1877015} and \cite{MR1222536} can be directly derived from the statement of Theorem \ref{theorem:CLcanary}.

\begin{theorem}[\cite{MR1877015,MR1222536}]
 Let $\kappa$ be an infinite cardinal with $\kappa^{{<}\kappa}=\kappa$ and $2^\kappa=\kappa^+$, let $\PPP$ be the partial order given by Theorem \ref{theorem:CLcanary} and let $G$ be $\PPP$-generic over $V$. 
 Then, in $V[G]$, there is a $\kappa$-canary tree of cardinality $\kappa^+$ and $\Club{S^{\kappa^+}_\kappa}$ is a $\mathbf{\Delta}_1$-subset of $\POT{\kappa}$. 
\end{theorem}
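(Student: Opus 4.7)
The plan is to simply unpack the statement of Theorem \ref{theorem:CLcanary} in the special case where the assumptions of the theorem to be proved are in force, and then chain together Lemma \ref{lemma:CanaryEmb} and Corollary \ref{corollary:CanaryFromDelta}. First I would verify that the cardinal arithmetic assumptions $\kappa^{{<}\kappa}=\kappa$ and $2^\kappa=\kappa^+$ are preserved into $V[G]$. Since $\PPP$ is ${<}\kappa^+$-directed closed, no new subsets of $\kappa$ appear in $V[G]$, so $2^\kappa=\kappa^+$ continues to hold there; the same closure ensures that no new ${<}\kappa$-sequences of ordinals are added, so $\kappa^{{<}\kappa}=\kappa$ is preserved as well. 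Together with the $(2^\kappa)^+ = \kappa^{++}$-chain condition, this also yields that $\PPP$ preserves cardinals and cofinalities.

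Next I would invoke Lemma \ref{lemma:IA}.\ref{item:IA3} in $V$: because $\kappa^{{<}\kappa}=\kappa$, the whole set $S^{\kappa^+}_\kappa$ lies in $I[\kappa^+]$, which makes $M:=S^{\kappa^+}_\kappa$ trivially a maximum element of $I[\kappa^+]\cap\POT{S^{\kappa^+}_\kappa} \mod \NS{}$ in $V$. This is the input needed to activate clause (2)(b) of Theorem \ref{theorem:CLcanary} in the extension: in $V[G]$ there is a subtree $\TTT$ of ${}^{{<}\kappa^+}\kappa^+$ of height $\kappa^+$ with $[\TTT]=\emptyset$ such that whenever $S\subseteq S^{\kappa^+}_\kappa$ is bistationary in $S^{\kappa^+}_\kappa$, the set $M\setminus S = S^{\kappa^+}_\kappa\setminus S$ is stationary, and hence $\TTT(S)\preceq\TTT$.

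With this embedding property in hand, the canary property follows from the implication \eqref{item:CanaryChar2}$\Rightarrow$\eqref{item:CanaryChar1} of Lemma \ref{lemma:CanaryEmb}, which holds without any cardinal arithmetic hypothesis; thus $\TTT$ is a $\kappa$-canary tree in $V[G]$. To confirm that $\TTT$ has cardinality $\kappa^+$, I would use the computation $(\kappa^+)^{{<}\kappa^+} = \kappa^+$ available in $V[G]$ from $2^\kappa = \kappa^+$ and $\kappa^{{<}\kappa}=\kappa$: the ambient tree ${}^{{<}\kappa^+}\kappa^+$ has cardinality $\kappa^+$, so $\TTT$ does as well.

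Finally, I would feed $\TTT$ into Corollary \ref{corollary:CanaryFromDelta}, whose hypotheses are now met in $V[G]$, to conclude that $\Club{S^{\kappa^+}_\kappa}$ is a $\mathbf{\Delta}_1$-subset of $\POT{\kappa^+}$. There is no genuine obstacle here; the only care required is the bookkeeping verification that the cardinal arithmetic hypotheses survive the forcing so that both Lemma \ref{lemma:IA} and Corollary \ref{corollary:CanaryFromDelta} remain applicable in the generic extension.
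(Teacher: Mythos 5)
Your proposal is correct and follows essentially the same route as the paper: take $M=S^{\kappa^+}_\kappa$, use Lemma \ref{lemma:IA}.\ref{item:IA3} to see it is a maximum element of $I[\kappa^+]\cap\POT{S^{\kappa^+}_\kappa}$ mod $\mathcal{NS}$ in $V$, invoke clause (2)(b) of Theorem \ref{theorem:CLcanary} to get $\TTT(S)\preceq\TTT$ for all bistationary $S$, and then apply Corollary \ref{corollary:CanaryFromDelta} and Lemma \ref{lemma:CanaryEmb}. Your added bookkeeping (preservation of the cardinal arithmetic and the computation that $\TTT$ has cardinality $\kappa^+$) is accurate, and your observation that the implication \eqref{item:CanaryChar2}$\Rightarrow$\eqref{item:CanaryChar1} needs no cardinal arithmetic is a correct, if minor, sharpening of the paper's phrasing.
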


\begin{proof}
  Set $M=(S^{\kappa^+}_\kappa)^V$. Then Lemma \ref{lemma:IA}.\ref{item:IA3} implies that $M$ is a maximum element of $I[\kappa^+]\cap\POT{S^{\kappa^+}_\kappa} ~ mod ~ \mathcal{NS}$ in $V$. 
   Let $\TTT\in V[G]$ be the tree given by Theorem \ref{theorem:CLcanary}. We then know that, in $V[G]$, the tree $\TTT$ has height and cardinality $\kappa^+$ and $[\TTT]=\emptyset$ holds. 
   Moreover, if $S\subseteq S^{\kappa^+}_\kappa$ is bistationary in $S^{\kappa^+}_\kappa$ in $V[G]$, then the last item in the statement  of Theorem \ref{theorem:CLcanary} implies that $\TTT(S)\preceq\TTT$ holds. 
 Since the properties of $\PPP$ ensure that $2^\kappa=\kappa^+$ holds in $V[G]$, we can now apply Corollary \ref{corollary:CanaryFromDelta} to show that  $\Club{S^{\kappa^+}_\kappa}$ is a $\mathbf{\Delta}_1$-subset of $\POT{\kappa}$. 
 Finally, the fact that $\kappa^{{<}\kappa}=\kappa$ holds in $V[G]$ allows us to use Lemma \ref{lemma:CanaryEmb} to conclude that $\TTT$ is a $\kappa$-canary tree in $V[G]$.  
\end{proof}

 In the remainder of this section, we discuss the main result of \cite{MR4469807} that uses Theorem \ref{theorem:CLcanary} to show that strong forcing axioms are compatible with the $\mathbf{\Delta}_1$-definability of $\Club{S^2_0}$. 
  The proof of this result relies on a connection between principles of \emph{stationary reflection} and the complexities  of non-stationary ideals that we will discuss next. 
  Given an uncountable regular cardinal $\theta$ and a stationary subset $S$ of $\theta$, we let $\Refl{S}$ denote the set of all \emph{reflection points of $S$ in $\theta$}, {i.e.,} the set of all limit ordinals $\lambda<\theta$ with the property that $S\cap\lambda$ is stationary in $\lambda$. 

\begin{lemma}\label{lemma:DefFromRefl}
 Let $S$ be a stationary subset of an uncountable regular cardinal $\theta$ and let $\calE$  be a set of stationary subsets of $\theta$ with the property that for every stationary subset $A$ of $S$, there exists $E\in\calE$ with $E\subseteq\Refl{A}$. 
 If the set $\calE$ is definable by a $\Sigma_1$-formula with parameter $p$, then the set $\NS{\theta}\cap\POT{S}$ is definable by a $\Pi_1$-formula with parameters $\HH{\theta}$, $p$ and $S$. 
\end{lemma}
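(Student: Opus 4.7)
The plan is to give a $\Sigma_1$-definition of the set $\POT{S}\setminus\NS{\theta}$ with parameters $\HH{\theta}$, $p$, and $S$; negating this together with the bounded condition $A\subseteq S$ then yields the required $\Pi_1$-definition of $\NS{\theta}\cap\POT{S}$. The core of the argument is the following equivalence for $A\subseteq S$: $A$ is stationary in $\theta$ if and only if there exists $E\in\calE$ with $E\subseteq\Refl{A}$.

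The forward direction is exactly the hypothesis on $\calE$. For the backward direction, I would fix $E\in\calE$ with $E\subseteq\Refl{A}$ and an arbitrary club $C\subseteq\theta$. Since the set of limit points of $C$ is a club in $\theta$ and $E$ is stationary, I can select some $\lambda\in E$ that is a limit point of $C$. Then $C\cap\lambda$ is a club of $\lambda$, and since $\lambda\in\Refl{A}$, the set $A\cap\lambda$ is stationary in $\lambda$, so $(C\cap\lambda)\cap A\neq\emptyset$ and in particular $C\cap A\neq\emptyset$.

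For the complexity count, the essential observation is that for $\lambda<\theta$ the statement ``$A\cap\lambda$ is stationary in $\lambda$'' is captured by the bounded formula
\begin{equation*}
\forall C\in\HH{\theta}\,\bigl((C\text{ is a club subset of }\lambda)\rightarrow C\cap A\neq\emptyset\bigr),
\end{equation*}
since every club subset of $\lambda$ lies in $\HH{\theta}$, and ``$C$ is a club subset of $\lambda$'' is itself expressible using only quantifiers bounded by $\lambda$ and $C$. Consequently ``$E\subseteq\Refl{A}$'' is $\Sigma_0$ with parameters $\HH{\theta}$, $E$, $A$, using the bounded outer quantifier $\forall\lambda\in E$. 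Combining this with the $\Sigma_1(p)$-condition ``$E\in\calE$'' under an outer existential $\exists E$ shows that ``there exists $E\in\calE$ with $E\subseteq\Refl{A}$'' is $\Sigma_1$ with parameters $\HH{\theta}$ and $p$. Conjoining with the $\Sigma_0(S)$-condition $A\subseteq S$ and taking the negation of the resulting $\Sigma_1$-formula delivers the desired $\Pi_1$-definition of $\NS{\theta}\cap\POT{S}$ with parameters $\HH{\theta}$, $p$, and $S$.

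The main subtlety is the backward direction of the equivalence: one must pick $\lambda\in E$ that is also a limit point of the given club $C\subseteq\theta$, so that $C\cap\lambda$ really is a club of $\lambda$ and can be tested against the stationary set $A\cap\lambda$. Once this is handled, the complexity bookkeeping is straightforward.
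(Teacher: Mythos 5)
Your proposal is correct and follows essentially the same route as the paper: you characterize $\POT{S}\setminus\NS{\theta}$ as the set of $A\subseteq S$ admitting some $E\in\calE$ with $E\subseteq\Refl{A}$, verify both inclusions exactly as the paper does (choosing $\lambda\in E\cap\Lim(C)$ for the nontrivial direction), and observe that this characterization is $\Sigma_1$ in the parameters $\HH{\theta}$, $p$, $S$. Your explicit justification that $\anf{A\cap\lambda\text{ is stationary in }\lambda}$ is bounded once $\HH{\theta}$ is available as a parameter spells out what the paper leaves implicit.
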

  
\begin{proof}
  We define  $\calA$ to be the set of all subsets $A$ of $S$ with the property that there exists $E\in\calE$ with $E\subseteq\Refl{A}$. 
   If $A\in\POT{S}\setminus\NS{\theta}$, then our assumptions on $\calE$ ensure that $A$ is contained in $\calA$. 
   Moreover, if $E\in\calE$ witnesses that $A\subseteq S$ is an element of $\calA$ and $C$ is  closed unbounded in  $\theta$, then we can find $\lambda\in E\cap\Lim(C)\subseteq\Refl{A}$ and therefore $\emptyset\neq A\cap C\cap\lambda\subseteq A\cap C$. 
   In combination, these computations show that $\calA=\POT{S}\setminus\NS{\theta}$ and this yields the conclusion of the lemma, because the set $\calA$  is  definable by a $\Sigma_1$-formula with parameters $\HH{\theta}$, $p$ and $S$. 
\end{proof}

 Before we connect the above lemma with Theorem \ref{theorem:CLcanary}, we need to recall another concept that is closely connected to the notions discussed above. 
 Remember that, given   an uncountable regular  cardinal $\kappa$, the class  $\text{IA}_\kappa$ of all sets that are \emph{internally approachable with length and cardinality $\kappa$} consists of all sets $W$ with the property that there exists a sequence $\vec{N} = \seq{N_\alpha}{\alpha < \kappa}$  satisfying the following statements: 
 \begin{enumerate}
   \item The sequence $\vec{N}$ is $\subseteq$-increasing and $\subseteq$-continuous. 
  
  \item $W = \bigcup\Set{N_\alpha}{\alpha < \kappa}$. 
 
  \item $\vert N_\alpha\vert < \kappa$ for all $\alpha<\kappa$. 
 
  \item Every proper initial segment of $\vec{N}$ is an element of $W$. 
 \end{enumerate}

\begin{theorem}[\cite{MR4469807}]
  Assume that \emph{Martin's Maximum $\MM$} holds and let $\PPP$ denote the partial order given by Theorem \ref{theorem:CLcanary}. 
 If $G$ is $\PPP$-generic over $V$, then $\Club{S^2_0}$ is a $\mathbf{\Delta}_1$-subset of $\POT{\omega_2}$ in $V[G]$. 
\end{theorem}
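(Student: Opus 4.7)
The plan is to apply Lemma \ref{lemma:DefFromRefl} with $\theta=\omega_2$ and $S=S^2_0$ to obtain a $\Pi_1$-definition of $\NS{\omega_2}\cap\POT{S^2_0}$ in $V[G]$, which will then yield the $\Pi_1$-definability of $\Club{S^2_0}$ via complementation and thereby (together with its evident $\Sigma_1$-definability with parameter $S^2_0$) the desired $\mathbf{\Delta}_1$-definability. The $\Sigma_1$-definable family $\calE$ of stationary sets required by Lemma \ref{lemma:DefFromRefl} will be drawn from the subsets of a fixed maximum element $M$ of $I[\omega_2]\cap\POT{S^2_1}\;mod\;\mathcal{NS}$, using that the $\Pi_1$-definability of $\NS{\omega_2}\cap\POT{M}$ is supplied by Corollary \ref{corollary:ComplexityForcingNS} and that reflection of stationary subsets of $S^2_0$ onto stationary subsets of $M$ is supplied by $\MM$.

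More concretely, since $\MM$ implies $2^{\aleph_0}=\aleph_2$, we have $\aleph_1^{{<}\aleph_1}=\aleph_1^{\aleph_0}=\aleph_2\leq\aleph_1^+$, so the hypotheses of Corollary \ref{corollary:ComplexityForcingNS} are satisfied with $\kappa=\omega_1$. Fix in $V$ a maximum element $M$ of $I[\omega_2]\cap\POT{S^2_1}\;mod\;\mathcal{NS}$, which exists by Lemma \ref{lemma:IA}. Corollary \ref{corollary:ComplexityForcingNS} then provides, in $V[G]$, a $\Pi_1$-formula with parameters in $\HH{\aleph_3}$ defining $\NS{\omega_2}\cap\POT{M}$. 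Working in $V[G]$, set
\[
\calE ~ = ~ \Set{E\subseteq M}{E\notin\NS{\omega_2}}.
\]
Since $E\in\NS{\omega_2}\cap\POT{M}$ is $\Pi_1$-expressible with parameters in $\HH{\aleph_3}$, its negation, and hence membership in $\calE$, is $\Sigma_1$-expressible with parameters in $\HH{\aleph_3}$.

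To apply Lemma \ref{lemma:DefFromRefl} with $\theta=\omega_2$, $S=S^2_0$ and this $\calE$, it remains to verify the following reflection statement in $V[G]$: for every stationary $A\subseteq S^2_0$, the set $\Refl{A}\cap M$ is stationary. Given this, $E:=\Refl{A}\cap M$ is an element of $\calE$ with $E\subseteq\Refl{A}$, as required. This is where $\MM$ enters essentially. In $V$, $\MM$ implies that every stationary $A\subseteq S^2_0$ reflects at stationarily many ordinals of cofinality $\omega_1$, and standard arguments going back to Foreman--Magidor--Shelah show that such reflection points lie in the approachable set $M$ modulo non-stationary. Since $\PPP$ is ${<}\omega_2$-directed closed and hence ${<}\omega_2$-distributive, every bounded subset of $\omega_2$ in $V[G]$ already lies in $V$, no new clubs of ordinals of cofinality $<\omega_2$ are added, and stationarity of subsets of $\omega_2$ is preserved; the reflection property then transfers from $V$ to $V[G]$ because for any stationary $A\subseteq S^2_0$ in $V[G]$ and each $\lambda<\omega_2$ the trace $A\cap\lambda$ lies in $V$, so $\Refl{A}$ is computable from ground-model data and the $V$-statement survives.

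The main obstacle lies precisely in this reflection step: pinning down the exact consequence of $\MM$ that every stationary $A\subseteq S^2_0$ reflects on a stationary subset of the maximum approachable set $M\subseteq S^2_1$, and verifying that this survives the passage to the forcing extension by $\PPP$. Once this is granted, Lemma \ref{lemma:DefFromRefl} produces, in $V[G]$, a $\Pi_1$-formula with parameters in $\HH{\aleph_3}$ defining $\NS{\omega_2}\cap\POT{S^2_0}$, from which $\Club{S^2_0}$ inherits a $\Pi_1$-definition with parameters in $\HH{\aleph_3}$ via the equivalence $A\in\Club{S^2_0}\Longleftrightarrow S^2_0\setminus A\in\NS{\omega_2}\cap\POT{S^2_0}$. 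Combined with the trivial $\Sigma_1$-definability of $\Club{S^2_0}$ using $S^2_0$ as parameter, this establishes that $\Club{S^2_0}$ is a $\mathbf{\Delta}_1$-subset of $\POT{\omega_2}$ in $V[G]$.
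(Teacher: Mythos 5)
Your overall architecture matches the paper's proof exactly: fix a maximum element $M$ of $I[\omega_2]\cap\POT{S^2_1}~mod~\mathcal{NS}$, get the $\Pi_1$-definition of $\NS{\omega_2}\cap\POT{M}$ from Corollary \ref{corollary:ComplexityForcingNS}, take $\calE=\POT{M}\setminus\NS{\omega_2}$, and feed the reflection claim into Lemma \ref{lemma:DefFromRefl}. However, there is a genuine gap in the step you yourself flag as the main obstacle. You prove the reflection statement in $V$ and then try to transfer it to $V[G]$ by arguing that $\PPP$ is ${<}\omega_2$-distributive, adds no bounded subsets of $\omega_2$, and that $\Refl{A}$ is ``computable from ground-model data.'' This does not work: $\PPP$ is only ${<}\omega_2$-closed, not $\omega_2$-distributive in the sense of adding no new subsets of $\omega_2$, so $V[G]$ can contain stationary subsets $A$ of $S^2_0$ that are not in $V$. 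For such an $A$ there is no ground-model instance of the reflection statement to ``survive''; the fact that every trace $A\cap\lambda$ lies in $V$ does not put $A$ itself in $V$, and the natural ground-model approximations of a name for $A$ (the set of ordinals some condition, or every condition, forces into $\dot{A}$) give containments pointing in the wrong direction for concluding that $\Refl{A}\cap M$ is stationary.

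The paper closes exactly this gap by invoking the indestructibility of $\MM$ under ${<}\omega_2$-directed closed forcing (Larson's theorem, cited as {\cite[Theorem 4.7]{cox2018forcing}}), so that $\MM$ holds in $V[G]$ and the Foreman--Magidor--Shelah argument can be run \emph{inside} $V[G]$: for a stationary $A\subseteq S^2_0$ in $V[G]$ one gets a stationary set $R$ of internally approachable elementary submodels $W\prec\HH{\aleph_3}$ with $W\cap\omega_2\in\Refl{A}$, and $2^{\aleph_0}=\aleph_2$ guarantees each such $W$ contains an enumeration of $[\omega_2]^{\aleph_0}$, so $E_0=\Set{W\cap\omega_2}{W\in R}$ is a stationary set in $I[\omega_2]$ and hence meets $M$ stationarily. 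Note also that your phrase ``such reflection points lie in the approachable set $M$ modulo non-stationary'' is stronger than what is true or needed: under $\MM$ the set $\Refl{A}\cap S^2_1$ need not be contained in $M$ modulo $\mathcal{NS}$; what the argument yields is only that some stationary set of reflection points lies in $I[\omega_2]$, whence $\Refl{A}\cap M$ is stationary. With the indestructibility of $\MM$ supplied, the rest of your proposal goes through as written.
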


\begin{proof}
  First, note that our assumptions imply that $2^{\aleph_0}=2^{\aleph_1}=\aleph_2$ holds in $V$ (see {\cite[Theorem 16.20 \& 31.23]{MR1940513}}) and $\MM$ holds in $V[G]$ (see {\cite[Theorem 4.7]{cox2018forcing}}).
  Lemma \ref{lemma:IA} then shows that, in $V$, there is a subset $M$ of $S^2_1$ that is a maximum element of $I[\omega_2]\cap\POT{S^2_1} ~ mod ~  \mathcal{NS}$ and Theorem \ref{theorem:CLcanary} shows that $M$ retains this property in $V[G]$. 
  In addition, an application of Corollary \ref{corollary:ComplexityForcingNS} allows us to conclude that, in $V[G]$, the set $\NS{\omega_2}\cap\POT{M}$ is definable by a $\Pi_1$-formula with parameters in $\HH{\aleph_3}$.  
  Now, work in $V[G]$ and define $\calE=\POT{M}\setminus\NS{\omega_2}$. We then know that the set $\calE$ consists of stationary subsets of $\omega_2$ and it is definable by a $\Sigma_1$-formula with parameters in $\HH{\aleph_3}$.

  \begin{claim*}
     If $A$ is a stationary subset of $S^2_0$, then  there is $E\in\calE$ with $E\subseteq\Refl{A}$. 
  \end{claim*}
  
  \begin{proof}[Proof of the Claim]
   The proof of {\cite[Theorem 13]{MM}}  yields $R\subseteq\text{IA}_{\omega_1}\cap[\HH{\aleph_3}]^{\aleph_1}$ that is stationary in $[\HH{\aleph_3}]^{\aleph_1}$, consists of elementary substructures of $\HH{\aleph_3}$ and satisfies  $E_0=\Set{W\cap\omega_2}{W\in R}\subseteq\Refl{A}$. 
    Then $E_0$ is a stationary subset of $S^2_1$. 
    Moreover, the fact that $2^{\aleph_0}=\aleph_2$ holds implies that each $W$ in $R$ contains an enumeration $\vec{z}=\seq{z_\xi}{\xi<\omega_2}$ of $[\omega_2]^{\aleph_0}$ as an element and therefore  the internal approachability of $W$ implies that $E_0$ is approachable with respect to $\vec{z}$. 
    This shows that $E_0$ is a stationary element of $I[\omega_2]$. 
    Since $M$ is a maximum element of $I[\omega_2]\cap\POT{S^2_1} ~ mod ~  \mathcal{NS}$,  we now know that $E=E_0\cap M$ is a stationary subset of $M$. In particular, we can conclude that  $E$ is an element of $\calE$ with $E\subseteq\Refl{A}$.  
  \end{proof}
  
  The above claim now allows us to use Lemma \ref{lemma:DefFromRefl} to show that $\NS{\omega_2}\cap\POT{S^2_0}$ is a $\mathbf{\Delta}_1$-subset of $\POT{\omega_2}$. 
  Since $$\Club{S^2_0} ~ = ~ \Set{A\subseteq\kappa}{S^2_0\setminus A\in\NS{\kappa^+}\cap\POT{S^2_0}},$$ this also shows that $\Club{S^2_0}$ is a $\mathbf{\Delta}_1$-subset of $\POT{\omega_2}$.  
\end{proof}

 It should be noted that the results of \cite{MR4469807} also show that strong forcing axioms are compatible with a negative answer to Question \ref{question:Q2} at $\omega_2$.
  This is a direct consequence of the indestructibility of these axioms under ${<}\omega_2$-directed closed forcings and   Lemma \ref{lemma:AddDeltaAbsoDelta} below.


\subsection{Stationary reflection}
 In this short section, we present an observation from  \cite{MR4469807} that shows that the connection between stationary reflection and the complexity of the non-stationary ideal provided by Lemma \ref{lemma:DefFromRefl} can be further utilized to obtain affirmative answers to Question \ref{question:Q2}. 
  Note that results presented later in this paper show that the assumptions of the following theorem cannot be reduced to the statement that every stationary subset of $S$ reflects (see Corollary \ref{corollary:NonDeltaWC} below).

\begin{proposition}
 Let $E$ and $S$ be stationary subsets of an uncountable cardinal with $\kappa=\kappa^{{<}\kappa}$. If  $\Refl{A}\in\Club{E}$ holds for every stationary subset $A$ of $S$, then $\Club{S}$ is a $\mathbf{\Delta}_1$-subset of $\POT{\kappa}$.
\end{proposition}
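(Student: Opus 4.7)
The plan is to apply Lemma \ref{lemma:DefFromRefl} with $\theta = \kappa$, the given stationary set $S$, and the witness family
\[
\calE ~ = ~ \Set{E \cap C}{C \text{ is a closed unbounded subset of } \kappa}.
\]
My first step would be to observe that every element of $\calE$ is stationary in $\kappa$, since $E$ is stationary and stationarity is preserved under intersection with a club. I would next check that $\calE$ is $\Sigma_1$-definable from the parameters $\kappa$ and $E$: a subset $X$ of $\kappa$ belongs to $\calE$ if and only if there exists $C \subseteq \kappa$ such that $C$ is closed and unbounded in $\kappa$ (a $\Sigma_0$-statement with parameter $\kappa$) and $X = E \cap C$.

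Second, I would verify the separating hypothesis of Lemma \ref{lemma:DefFromRefl}. Given a stationary subset $A$ of $S$, the assumption $\Refl{A} \in \Club{E}$ unfolds to the existence of a club $C \subseteq \kappa$ with $C \cap E \subseteq \Refl{A}$; hence $E \cap C$ is an element of $\calE$ that is contained in $\Refl{A}$, which is exactly the hypothesis required by the lemma.

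With these two observations in place, Lemma \ref{lemma:DefFromRefl} immediately yields that $\NS{\kappa} \cap \POT{S}$ is definable by a $\Pi_1$-formula with parameters $\HH{\kappa}$, $E$, $\kappa$ and $S$. The cardinal arithmetic assumption $\kappa = \kappa^{{<}\kappa}$ forces $\kappa$ to be regular and guarantees $\vert \HH{\kappa} \vert = 2^{{<}\kappa} \leq \kappa$, so all of these parameters lie in $\HH{\kappa^+}$. Since $A \in \Club{S}$ if and only if $S \setminus A \in \NS{\kappa} \cap \POT{S}$, the set $\Club{S}$ is then also $\Pi_1$-definable with parameters in $\HH{\kappa^+}$, and combining this with its standard $\Sigma_1$-definability from the parameter $S$ shows that $\Club{S}$ is a $\mathbf{\Delta}_1$-subset of $\POT{\kappa}$.

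I do not anticipate a substantive obstacle here: once the family $\calE$ is identified, the argument is essentially bookkeeping of parameters against Lemma \ref{lemma:DefFromRefl}, and the assumption $\kappa = \kappa^{{<}\kappa}$ is used only to ensure $\HH{\kappa} \in \HH{\kappa^+}$ so that the parameter list produced by the lemma falls inside $\HH{\kappa^+}$ as required by the definition of a $\mathbf{\Delta}_1$-subset.
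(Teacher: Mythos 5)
Your proposal is correct and follows essentially the same route as the paper: both apply Lemma \ref{lemma:DefFromRefl} to a $\Sigma_1$-definable family of stationary sets built from $E$ (the paper takes $\calE=\Club{E}$ itself, you take the cofinal subfamily $\Set{E\cap C}{C\text{ club}}$, which makes no difference), and then transfer the resulting $\Pi_1$-definition of $\NS{\kappa}\cap\POT{S}$ to $\Club{S}$ using $\HH{\kappa}\in\HH{\kappa^+}$. Your bookkeeping of the parameters and of the role of $\kappa=\kappa^{{<}\kappa}$ matches the paper's.
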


\begin{proof}
 First, note that $\Club{E}$ is a set of stationary subsets of $\kappa$ that is definable by a $\Sigma_1$-formula with parameter $E$ and has the property that for every stationary subset $A$ of $S$, there exists $D\in\Club{E}$ with $D\subseteq\Refl{A}$. 
  Hence, Lemma \ref{lemma:DefFromRefl}  shows that the set $\NS{\kappa}\cap\POT{S}$ is definable by a $\Pi_1$-formula with parameters $E$, $\HH{\kappa}$ and $S$.  
  Since $\HH{\kappa}$ is an element of $\HH{\kappa^+}$ and $$\Club{S} ~ = ~ \Set{A\subseteq\kappa}{S\setminus A\in\NS{\kappa}\cap\POT{S}},$$ we can conclude that $\Club{S}$ is a $\mathbf{\Delta}_1$-subset of $\POT{\kappa}$.   
\end{proof}

 In \cite{MR683153}, Magidor shows that it is possible to force over a model of $\ZFC+\GCH$ containing a weakly compact cardinal to produce a model of $\ZFC+{2^{\aleph_1}=\aleph_2}$ with the property that $\Refl{A}\in\Club{S^2_1}$ holds for every stationary subset $A$ of $S^2_0$. 
  The above proposition now shows that $\Club{S^2_0}$ is a $\mathbf{\Delta}_1$-subset of $\POT{\omega_2}$ in Magidor's model.


\subsection{Another forcing result}

 As the final definability result, we present the statement of the main result of \cite{MR3320477}. 
 While the  consistency results discussed above yield several settings in which there is an infinite regular cardinal $\kappa$ and a stationary subset $S$ of $\kappa^+$ with the property that the restriction $\Club{S}$ of the club filter  to $S$  is a $\mathbf{\Delta}_1$-subset of $\POT{\kappa^+}$, this theorem of S. Friedman, Wu and Zdomskyy shows that the full club filter $\Club{\kappa^+}$ of an arbitrary successor cardinal $\kappa^+$ can be forced to be a $\mathbf{\Delta}_1$-subset of $\POT{\kappa^+}$.

 \begin{theorem}[\cite{MR3320477}]
  Assume that $V=L$ holds. If  $\kappa$ is an infinite cardinal, then there exists a partial order $\PPP$ such that the following statements hold: 
  \begin{enumerate}
   \item Forcing with $\PPP$ preserves all cardinals and the $\GCH$. 
   
   \item If $G$ is $\PPP$-generic over $V$, then $\Club{\kappa^+}$ is a $\mathbf{\Delta}_1$-subset of $\POT{\kappa^+}$ in $V[G]$. 
  \end{enumerate}
 \end{theorem}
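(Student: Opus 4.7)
The goal is to construct, over $V=L$, a cardinal- and $\GCH$-preserving partial order $\PPP$ such that $\Club{\kappa^+}$ becomes a $\mathbf{\Delta}_1$-subset of $\POT{\kappa^+}$ in the extension. Since $\Sigma_1$-definability of $\Club{\kappa^+}$ is automatic, it suffices to produce a $\Sigma_1$-characterization of the stationary subsets of $\kappa^+$ using parameters from $\HH{\kappa^{++}}$; non-stationarity then becomes $\Pi_1$, and hence so does $\Club{\kappa^+}$ itself (since $A\in\Club{\kappa^+}$ if and only if $\kappa^+\setminus A\in\NS{\kappa^+}$). The plan is to force a single ``master'' subset $X$ of $\kappa^+$ that simultaneously codes a witness for every stationary subset of $\kappa^+$ against a fixed $L$-almost-disjoint family $\calA$ of cofinal subsets of $\kappa$, so that the $\Sigma_1$-witness reads: ``there exists an $L$-code extractable from $X$ via $\calA$ that codes $S$''.

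Concretely, I would set $\PPP$ up as a ${<}\kappa^+$-support iteration of length $\kappa^{++}$ in which an $L$-bookkeeping enumerates at each stage $\alpha$ a $\PPP_\alpha$-name $\dot{S}_\alpha$ for a subset of $\kappa^+$, and the stage-$\alpha$ iterand is a two-step forcing: first a stationarity-preserving, ${<}\kappa^+$-closed \emph{stamping} that permanently registers $\dot{S}_\alpha$ as stationary whenever it is forced to be so, and then an almost-disjoint coding against $\calA$ that writes a canonical $L$-code of the stamped set into $X$. Standard iteration lemmas deliver cardinal and $\GCH$ preservation: the ${<}\kappa^+$-directed closure of each iterand preserves cardinals up to $\kappa^+$; the $\kappa^{++}$-chain condition of the full iteration (valid under $\GCH$ with supports of cardinality $\leq\kappa$) preserves cardinals above; and $\GCH$ is maintained by a stage-by-stage size count.

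The principal obstacle will be the faithfulness of the coding under an iteration of length $\kappa^{++}$. Two things must be guaranteed simultaneously: (i) every stationary subset of $\kappa^+$ in $V[G]$ is addressed and correctly stamped at some stage, which forces all later iterands to preserve the stationarity of previously stamped sets so that the bookkeeping actually catches up; and (ii) no ``spurious'' $L$-code of a non-stationary set can be read off the final $X$, which demands that the almost-disjoint coding be rigid against tail iterands inadvertently introducing new code fragments. Balancing (i) and (ii) --- through the interplay of closure, chain condition, almost-disjoint coding machinery, and the condensation properties of $L$ in the ground model --- is the technical heart of the construction in \cite{MR3320477}.
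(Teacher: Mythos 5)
First, a point of reference: the paper does not prove this theorem — it only states the result of Friedman, Wu and Zdomskyy and cites \cite{MR3320477} — so there is no in-paper argument to compare yours against, and your proposal has to stand on its own. Your opening reduction is correct: since $\Club{\kappa^+}$ and $\NS{\kappa^+}$ are always $\Sigma_1$-definable with parameter $\kappa^+$, it suffices to arrange a $\Sigma_1$-definition, with parameters in $\HH{\kappa^{++}}$, of the collection of stationary subsets of $\kappa^+$; and forcing over $L$ to add generic ``stationarity witnesses'' along a long iteration is indeed the right general strategy.

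As a proof, however, the proposal has a genuine gap: the two properties you isolate as ``the principal obstacle'' are not side conditions to be verified but are the entire mathematical content of the theorem, and you explicitly defer them to the paper you are meant to be proving. Two things in particular are missing. First, you never explain why the $\Sigma_1$ condition ``an $L$-code of $S$ is extractable from $X$'' should imply that $S$ is stationary in $V[G]$. A witness of hereditary cardinality at most $\kappa^+$ cannot enumerate all clubs of $\kappa^+$, so the correctness of your characterization must come from a condensation or reflection argument built into the very design of the witness (this is where $V=L$ is actually used), and no such design is given; without it, nothing rules out a non-stationary set acquiring an extractable code. Second, the ``stamping'' iterand is undefined. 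Making the stationarity of $\dot{S}_\alpha$ indestructible under the remaining $\kappa^{++}$-length tail — while that tail itself consists of further stampings and codings for other sets — is exactly where such constructions tend to break (coding and club-shooting forcings routinely destroy stationarity), and declaring the iterands ``stationarity-preserving'' is circular until they are specified. There is also a type mismatch in the coding set-up: almost-disjoint coding against a family of cofinal subsets of $\kappa$ produces a subset of $\kappa$, not of $\kappa^+$, and it is unclear how a single master set should faithfully carry codes for all $\kappa^{++}$-many stationary subsets of $\kappa^+$ arising along the iteration. As it stands, the proposal is a plausible road map that names the difficulties rather than an argument that resolves them.
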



\section{Undefinability results}\label{section:3}

 We now continue by presenting several canonical settings in which  non-stationary ideals are not $\mathbf{\Delta}_1$-definable.


\subsection{Descriptive arguments}\label{subsection:Descriptive}

The easiest way to obtain a negative answer to Question \ref{question:Q1} at an uncountable regular cardinal  is to start with an uncountable cardinal $\kappa$ satisfying  $\kappa^{{<}\kappa}=\kappa$  and then add $\kappa^+$-many Cohen subsets to $\kappa$  (see {\cite[Theorem 4.1]{MR1877015}} and {\cite[Theorem 3]{MR1222536}}).  
 In the following, we want to present results contained in \cite{MR3952233} and \cite{MR3430247}  that put this consistency result into a wider context. 
 More specifically, we want to derive this implication using notions and results from  \emph{generalized descriptive set theory}, the study of the structural properties of definable subsets of higher higher function spaces (see, for example, \cite{MR3235820}, \cite{MR3549563}, \cite{MR2987148} and \cite{MR1242054}).  
 For this purpose, we equip the power set $\POT{\kappa}$ of an infinite regular cardinal $\kappa$ with the topology whose basic open sets are of the form $$\calN_{\beta,b} ~ = ~ \Set{x\subseteq\kappa}{x\cap\beta=b}$$ for some ordinal $\beta<\kappa$ and a subset $b$ of $\beta$. 
 The resulting space can easily be identified with the \emph{generalized Cantor space} of the cardinal $\kappa$, {i.e.,} the topological space consisting of the set ${}^\kappa 2$ of all functions from $\kappa$ to $\{0,1\}$ equipped with the topology whose basic open sets consist of all extensions of functions $\map{s}{\alpha}{2}$ with $\alpha<\kappa$. 
 The next definition directly generalizes the classical Baire property to higher spaces.

 \begin{definition}
  Let $\calX$ be a topological space and let $\kappa$ be an infinite cardinal. 
  \begin{enumerate}
   \item A subset of $\calX$ is \emph{nowhere dense}  if its closure in $\calX$ has empty interior. 
   
   \item A subset of $\calX$ is \emph{$\kappa$-meager} if it is equal to the union of $\kappa$-many nowhere dense subsets of $\calX$. 
   
   \item A subset $\calA$ of $\calX$ has the \emph{$\kappa$-Baire property} if there is an open subset $\calU$ of $\calX$ with the property that the symmetric difference $$\calA_\Delta\calU ~ = ~ (\calA\setminus\calU) ~ \cup ~ (\calU\setminus\calA)$$ is $\kappa$-meager. 
  \end{enumerate}
 \end{definition}

In \cite{MR1880900}, Halko and Shelah showed that, if $\kappa$ is an uncountable regular cardinal, then $\Club{\kappa}$ and $\NS{\kappa}$ are subsets of $\POT{\kappa}$ without the $\kappa$-Baire property. In the following, we will present an argument contained in  \cite{MR3430247} that yields a stronger conclusion.  This argument is based on the following topological property:

\begin{definition} 
 Given an infinite cardinal $\kappa$, a subset $\calS$ of a topological space $\calX$ is \emph{$\kappa$-super-dense}  if for every non-empty  open subset $\calU$ of $\calX$ and every sequence $\seq{\calU_\alpha}{\alpha<\kappa}$ of dense open subsets of $\calU$, the corresponding  intersection $\bigcap\Set{\calS\cap\calU_\alpha}{\alpha<\kappa})$ is non-empty. 
\end{definition}

 The following proposition motivates the definition of super-denseness. It generalizes the trivial fact that disjoint dense subsets of a topological space cannot be separated by an open subset.

\begin{proposition} 
 Let $\kappa$ be an infinite cardinal and let $\calS$ and $\calT$ be disjoint $\kappa$-super-dense subsets of a topological space $\calX$. 
 If $\calA$ is a subset of $\calX$ that separates $\calS$ from $\calT$({i.e.,} $\calS\subseteq\calA\subseteq\calX\setminus\calT$ holds), then  $\calA$ does not have the $\kappa$-Baire property.    
\end{proposition}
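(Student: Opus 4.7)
I would argue by contradiction. Suppose $\calA$ has the $\kappa$-Baire property. Then there is an open subset $\calU$ of $\calX$ together with a sequence $\seq{N_\alpha}{\alpha<\kappa}$ of nowhere dense subsets of $\calX$ such that $\calA \triangle \calU \subseteq \bigcup_{\alpha<\kappa} N_\alpha$. The core idea is that super-denseness of $\calT$ will force a witness $y$ into the open set $\calU$ that nevertheless lies outside $\calA$, which places $y$ into the meager symmetric difference while simultaneously avoiding all of its nowhere dense components.

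More concretely, I would first handle the main case where $\calU$ is non-empty. For each $\alpha<\kappa$, set $\calV_\alpha = \calU \setminus \overline{N_\alpha}$, which is open in $\calX$ and, since $\overline{N_\alpha}$ has empty interior, is dense in $\calU$. Applying the $\kappa$-super-denseness of $\calT$ to the non-empty open set $\calU$ and the sequence $\seq{\calV_\alpha}{\alpha<\kappa}$ produces a point
\[
y \in \calT \cap \calU \cap \bigcap_{\alpha<\kappa} \calV_\alpha.
\]
Since $\calT$ is disjoint from $\calA$, we have $y \in \calU \setminus \calA \subseteq \calA \triangle \calU \subseteq \bigcup_{\alpha<\kappa} N_\alpha$, contradicting the fact that $y$ avoids every $\overline{N_\alpha}$.

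The degenerate case $\calU = \emptyset$ is handled symmetrically via $\calS$: under the harmless assumption that $\calX$ itself is non-empty (otherwise the statement is vacuous), the sets $\calX \setminus \overline{N_\alpha}$ are dense open in $\calX$, and $\kappa$-super-denseness of $\calS$ applied to $\calU' = \calX$ yields a point of $\calS$ missing every $N_\alpha$; but $\calS \subseteq \calA \subseteq \bigcup_{\alpha<\kappa} N_\alpha$ in this case, which is a contradiction. I do not anticipate a genuine obstacle here — the argument is essentially the standard proof that two disjoint dense sets cannot be topologically separated, upgraded from single dense open sets to $\kappa$-indexed intersections exactly as the definition of super-denseness is designed to allow. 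The only subtlety worth flagging is the need to verify that $\calV_\alpha$ is genuinely dense \emph{in $\calU$} (which uses that $\calU$ is open in $\calX$, so non-empty relatively open subsets of $\calU$ are themselves non-empty open in $\calX$ and therefore meet $\calX \setminus \overline{N_\alpha}$).
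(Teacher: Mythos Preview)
Your argument is correct and follows essentially the same route as the paper's own proof: split into the cases $\calU=\emptyset$ and $\calU\neq\emptyset$, and in each case apply $\kappa$-super-density (of $\calS$ or $\calT$ respectively) to the dense open sets obtained by removing the nowhere dense pieces. The only cosmetic difference is that the paper passes at the outset to closed nowhere dense sets $\calC_\alpha$, whereas you keep general $N_\alpha$ and take closures when forming $\calV_\alpha=\calU\setminus\overline{N_\alpha}$; this is immaterial.
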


\begin{proof}
 Assume, towards a contradiction, that there is an open subset $\calU$ of $\calX$ and a sequence 
 $\seq{\calC_\alpha}{\alpha<\kappa}$ of closed nowhere dense subsets of $\calX$ with the property that  $\calA_\Delta \calU  \subseteq  \bigcup\Set{\calC_\alpha}{\alpha<\kappa}$. 
 First, assume, towards a contradiction, that $\calU$ is the empty set and therefore $\calS\subseteq\calA  \subseteq  \bigcup\Set{\calC_\alpha}{\alpha<\kappa}$. 
 Then $\seq{\calX\setminus\calC_\alpha}{\alpha<\kappa}$ is a sequence of dense open subsets of $\calX$, and the $\kappa$-super-density of $\calS$ implies that $\bigcap\Set{\calS\setminus\calC_\alpha}{\alpha<\kappa}\neq\emptyset$, a contradiction.  
 This shows that $\calU$ is non-empty and $\seq{\calU\setminus\calC_\alpha}{\alpha<\kappa}$ is a sequence of dense open subsets of $\calU$. 
 Then there is an $x\in\bigcap\Set{(\calT\cap\calU)\setminus\calC_\alpha}{\alpha<\kappa}$. 
 Since $x\in\calT$, we have $x\in\calU\setminus\calA$ and therefore $x\in\calC_\alpha$ for some $\alpha<\kappa$, a contradiction.  
\end{proof}

The following argument now shows how  super-denseness is connected to our context:

\begin{lemma}\label{00:lemma:ClubFilterSuperDense}
 If $\kappa$ is an uncountable regular cardinal, then $\Club{\kappa}$ and $\NS{\kappa}$ are disjoint $\kappa$-super-dense subsets of $\POT{\kappa}$. 
\end{lemma}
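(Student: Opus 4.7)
The plan is to construct, for each choice of basic open neighborhood and sequence of dense open sets, a single point of their intersection which either contains or is disjoint from a prescribed club. Disjointness $\Club{\kappa}\cap\NS{\kappa}=\emptyset$ is immediate: if $A$ lay in both, then $A$ would contain a club $C_1$ and $\kappa\setminus A$ would contain a club $C_2$, yet $C_1\cap C_2$ is a nonempty club contained in $A\cap(\kappa\setminus A)=\emptyset$, which is absurd.

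For super-density, fix a nonempty open $\calU\subseteq\POT{\kappa}$ and a sequence $\seq{\calU_\alpha}{\alpha<\kappa}$ of dense open subsets of $\calU$. By shrinking, assume $\calU=\calN_{\beta_0,b_0}$ for some $\beta_0<\kappa$ and $b_0\subseteq\beta_0$. I would construct by transfinite recursion a continuous strictly increasing sequence $\seq{\beta_\alpha}{\alpha<\kappa}$ of ordinals below $\kappa$ together with sets $b_\alpha\subseteq\beta_\alpha$ satisfying the coherence condition $b_\gamma\cap\beta_\alpha=b_\alpha$ for $\alpha<\gamma$, such that $\calN_{\beta_{\alpha+1},b_{\alpha+1}}\subseteq\calN_{\beta_\alpha,b_\alpha}\cap\calU_\alpha$ and, at limit $\lambda$, $\beta_\lambda=\sup_{\alpha<\lambda}\beta_\alpha$ with $b_\lambda=\bigcup_{\alpha<\lambda}b_\alpha$. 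At a successor step I would pick $b_\alpha^\star\in\{b_\alpha,\,b_\alpha\cup\{\beta_\alpha\}\}$, observe that $\calN_{\beta_\alpha+1,b_\alpha^\star}$ is a nonempty open subset of $\calN_{\beta_\alpha,b_\alpha}\subseteq\calU$, and use density of $\calU_\alpha$ in $\calU$ to find a basic open $\calN_{\beta_{\alpha+1},b_{\alpha+1}}$ inside $\calN_{\beta_\alpha+1,b_\alpha^\star}\cap\calU_\alpha$. For the $\Club{\kappa}$ construction one takes $b_\alpha^\star=b_\alpha\cup\{\beta_\alpha\}$ at every successor step; for the $\NS{\kappa}$ construction one takes $b_\alpha^\star=b_\alpha$.

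Setting $x=\bigcup_{\alpha<\kappa}b_\alpha$, coherence gives $x\cap\beta_\alpha=b_\alpha$ for every $\alpha<\kappa$, so $x\in\calN_{\beta_\alpha,b_\alpha}\subseteq\calU_\alpha$ for all $\alpha$. The set $C=\Set{\beta_\alpha}{0<\alpha<\kappa}$ is a club in $\kappa$: closure follows from continuity at limits, and unboundedness from regularity of $\kappa$ together with $\beta_\alpha<\beta_{\alpha+1}$. In the first construction $\beta_\alpha\in b_{\alpha+1}\subseteq x$ for all $\alpha$, so $C\subseteq x$ and $x\in\Club{\kappa}\cap\bigcap_{\alpha<\kappa}\calU_\alpha$. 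In the second construction $\beta_\alpha\notin b_{\alpha+1}$ and coherence propagates this forward, giving $C\cap x=\emptyset$; hence $\kappa\setminus x$ contains the club $C$ and $x\in\NS{\kappa}\cap\bigcap_{\alpha<\kappa}\calU_\alpha$.

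The only delicate point is the limit step of the recursion: regularity of $\kappa$ is needed to keep $\beta_\lambda<\kappa$, and the coherence of the $b_\alpha$ is needed to ensure that the basic open set $\calN_{\beta_\lambda,b_\lambda}$ genuinely refines every earlier $\calN_{\beta_\alpha,b_\alpha}$ (this is exactly where $x\cap\beta_\alpha=b_\alpha$ is forced). Once the limit bookkeeping is set up, the whole argument amounts to a fusion-style diagonalization, and both super-density statements are obtained simultaneously by toggling a single bit at each successor step.
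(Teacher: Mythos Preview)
Your argument is correct and follows essentially the same fusion-style construction as the paper's proof; the only cosmetic difference is that the paper runs the two constructions simultaneously with parallel sequences $b^0_\alpha$ and $b^1_\alpha$ rather than toggling a single bit. One small slip: from your recursion you obtain $\calN_{\beta_{\alpha+1},b_{\alpha+1}}\subseteq\calU_\alpha$, not $\calN_{\beta_\alpha,b_\alpha}\subseteq\calU_\alpha$, so the inclusion in your concluding sentence is off by one --- but since $x$ lies in every $\calN_{\beta_\gamma,b_\gamma}$, the conclusion $x\in\bigcap_{\alpha<\kappa}\calU_\alpha$ is unaffected.
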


\begin{proof}
 Let $\calU$ be a non-empty open subset of $\POT{\kappa}$ and let $\seq{\calU_\alpha}{\alpha<\kappa}$ be a sequence of dense open subsets of $\calU$. Pick $\beta<\kappa$ and $b\subseteq\beta$ with $\calN_{\beta,b}\subseteq\calU$. 
 In this situation, a standard inductive construction yields a strictly increasing, continuous sequence $\seq{\beta_\alpha<\kappa}{\alpha<\kappa}$ of ordinals and a system $\seq{b^i_\alpha}{\alpha<\kappa, ~ i<2}$ of bounded subsets of $\kappa$  with the property that 
 $b^i_\alpha\subseteq\beta_\alpha$, 
  $b=b^i_\alpha\cap\beta$, 
  $b^i_\alpha=b^i_{\bar{\alpha}}\cap\beta_\alpha$, $\beta_\alpha\notin b^0_{\bar{\alpha}}$, $\beta_\alpha\in b^1_{\bar{\alpha}}$ and
  $\calN_{\beta_\alpha,b^i_\alpha} ~ \subseteq ~ \calU_\alpha$  
   for all $\alpha<\bar{\alpha}<\kappa$ and $i<2$. 
  Set $C=\Set{\beta_\alpha}{\alpha<\kappa}$ and $B_i=\bigcup\Set{b^i_\alpha}{\alpha<\kappa}\subseteq\kappa$ for $i<2$. 
  Then $C$ is a closed unbounded subset of $\kappa$ that witnesses that $B_0\in\Club{\kappa}$ and $B_1\in\NS{\kappa}$. Moreover, our construction ensures that $B_0,B_1\in\bigcap\Set{\calU_\alpha}{\alpha<\kappa}$. 
\end{proof}

 \begin{corollary}
  If $\kappa$ is an uncountable regular cardinal, then no subset of $\POT{\kappa}$ that separates $\Club{\kappa}$ from $\NS{\kappa}$ has the $\kappa$-Baire property. \qed 
 \end{corollary}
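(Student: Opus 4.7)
The plan is to simply combine the two preceding results: the proposition which asserts that any subset of a topological space separating two disjoint $\kappa$-super-dense sets fails to have the $\kappa$-Baire property, and Lemma \ref{00:lemma:ClubFilterSuperDense} which establishes that $\Club{\kappa}$ and $\NS{\kappa}$ are themselves disjoint $\kappa$-super-dense subsets of $\POT{\kappa}$ whenever $\kappa$ is uncountable and regular.

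Concretely, I would start by fixing an uncountable regular cardinal $\kappa$ and an arbitrary subset $\calA$ of $\POT{\kappa}$ which separates $\Club{\kappa}$ from $\NS{\kappa}$, so that $\Club{\kappa}\subseteq\calA\subseteq\POT{\kappa}\setminus\NS{\kappa}$. By Lemma \ref{00:lemma:ClubFilterSuperDense}, the sets $\Club{\kappa}$ and $\NS{\kappa}$ are disjoint $\kappa$-super-dense subsets of $\POT{\kappa}$. Hence the hypothesis of the preceding proposition is satisfied with $\calS=\Club{\kappa}$ and $\calT=\NS{\kappa}$, and the proposition immediately yields the conclusion that $\calA$ does not have the $\kappa$-Baire property.

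There is no genuine obstacle here, since the real content of the corollary has already been packaged into the two preceding results; the only thing to do is to verify that the hypotheses match. In particular, one should note that $\Club{\kappa}$ and $\NS{\kappa}$ are indeed disjoint, which is immediate from the fact that $\kappa$ itself is stationary (so that no closed unbounded subset of $\kappa$ can be non-stationary), and that the requirement of being a separating set matches verbatim the definition used in the proposition. Since $\calA$ was arbitrary among separating sets, the corollary follows at once.
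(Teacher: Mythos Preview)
Your proposal is correct and matches the paper's approach exactly: the paper marks the corollary with \qed, indicating it follows immediately from the preceding proposition on separating $\kappa$-super-dense sets together with Lemma \ref{00:lemma:ClubFilterSuperDense}, which is precisely the combination you spell out.
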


 In combination with results presented in Section \ref{section:2}, this corollary shows that, if  $\ZFC$ is consistent, then this theory does not prove that for uncountable regular cardinals $\kappa$, all $\mathbf{\Delta}_1$-subsets of $\POT{\kappa}$ have the $\kappa$-Baire property. 
 In the remainder of this section, we will show that the negation of this statement is also not provable. 
 These arguments connect the regularity property introduced above with notions of \emph{generic absoluteness}.

\begin{definition}
 Given an infinite cardinal $\kappa$ and a partial order $\PPP$, a subset $\calA$ of $\POT{\kappa}$ is \emph{$\PPP$-absolutely $\mathbf{\Delta}_1$-definable}  if there is $z\in\HH{\kappa^+}$ and  $\Sigma_1$-formulas $\varphi_0(v_0,v_1)$ and $\varphi_1(v_0,v_1)$ such that the following statements hold: 
 \begin{itemize}
  \item  $\calA=\Set{A\subseteq\kappa}{\varphi_0(A,z)}$. 
  
  \item $\POT{\kappa}\setminus\calA=\Set{A\subseteq\kappa}{\varphi_1(A,z)}$. 
  
  \item $\mathbbm{1}_\PPP\Vdash\anf{\forall A\subseteq\check{\kappa} ~ [\varphi_0(A,\check{z})\vee\varphi_1(A,\check{z})]}$. 
 \end{itemize} 
\end{definition}
 
 In the following, we let  $\Add{\kappa}{1}$ denote  the forcing that adds a Cohen subset to an infinite regular cardinal $\kappa$, {i.e.,} the partial order that consists of functions $\map{s}{\alpha}{2}$ with $\alpha<\kappa$, ordered under end-extension.

\begin{lemma}\label{lemma:AbsoBaireProperty}
 If $\kappa$ is an infinite cardinal with $\kappa=\kappa^{{<}\kappa}$, then every  $\Add{\kappa}{1}$-absolutely $\mathbf{\Delta}_1$-definable subset of $\POT{\kappa}$ has the $\kappa$-Baire property. 
\end{lemma}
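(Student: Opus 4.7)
The plan is to produce an explicit open set $\calU_0\subseteq\POT{\kappa}$ whose symmetric difference with $\calA$ is $\kappa$-meager, thereby witnessing the $\kappa$-Baire property. Let $\varphi_0,\varphi_1$ and $z\in\HH{\kappa^+}$ witness the $\Add{\kappa}{1}$-absolute $\mathbf{\Delta}_1$-definability of $\calA$, and identify each condition $s\colon\alpha\to 2$ in $\Add{\kappa}{1}$ with the basic open set $\calN_s=\calN_{\alpha,s^{-1}\{1\}}$. The assumption $\kappa=\kappa^{{<}\kappa}$ ensures both that $\vert\Add{\kappa}{1}\vert=\kappa$ and that the generalized Baire category theorem holds in $\POT{\kappa}$, in the sense that any intersection of $\kappa$-many dense open subsets is dense. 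Writing $\dot{G}$ for the canonical name for the generic subset of $\kappa$, set
$$D_i=\Set{s\in\Add{\kappa}{1}}{s\Vdash\varphi_i(\dot{G},\check{z})}\quad\text{and}\quad\calU_i=\bigcup\Set{\calN_s}{s\in D_i}$$
for $i<2$. Given $t\in\Add{\kappa}{1}$ and a generic $G\ni t$, the third clause of the definition forces $\varphi_0(\dot{G},\check{z})\vee\varphi_1(\dot{G},\check{z})$ in $V[G]$, so some extension of $t$ lies in $D_0\cup D_1$. Hence $\calU_0\cup\calU_1$ is a dense open subset of $\POT{\kappa}$ whose complement is nowhere dense.

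Next, using $\kappa=\kappa^{{<}\kappa}$, I would fix a transitive set $M$ of size $\kappa$ that models a sufficient fragment of $\ZFC$, has $z$ and $\Add{\kappa}{1}$ as elements, and is closed enough that the forcing relation for the $\Sigma_1$-formulas $\varphi_0,\varphi_1$ is computed identically in $M$ and in $V$. Call $A\subseteq\kappa$ \emph{$M$-generic} if the filter $G_A=\Set{s\in\Add{\kappa}{1}}{A\cap\dom{s}=s^{-1}\{1\}}$ meets every dense open subset of $\Add{\kappa}{1}$ lying in $M$. Since $M$ contains at most $\kappa$ such dense sets, the generalized Baire category theorem implies that the set $\calC$ of $M$-generic subsets of $\kappa$ is comeager, so $\POT{\kappa}\setminus\calC$ is $\kappa$-meager.

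The heart of the argument, and the place where the main work concentrates, is the absoluteness claim: if $A\in\calC$ and $G_A$ contains some $s\in D_i\cap M$, then $\varphi_i(A,z)$ holds in $V$. To see this, $M$-genericity of $A$ makes $G_A$ an $\Add{\kappa}{1}$-generic filter over $M$ whose associated generic subset of $\kappa$ is precisely $A$, so the forcing theorem applied to $s\in G_A$ yields $M[A]\models\varphi_i(A,z)$; because $M[A]\subseteq V$ and $\varphi_i$ is $\Sigma_1$, upward absoluteness between transitive models gives $V\models\varphi_i(A,z)$. Granting this, every $A\in\calU_0\setminus\calA$ must fail the claim at $i=0$ and thus satisfy $A\notin\calC$; and every $A\in\calA\setminus\calU_0$ either lies outside $\calC$ or outside $\calU_0\cup\calU_1$, because $A\in\calC\cap\calU_1$ would violate the claim at $i=1$ by forcing $A\notin\calA$. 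In both directions $\calA\bigtriangleup\calU_0$ is contained in the union of the two $\kappa$-meager sets $\POT{\kappa}\setminus\calC$ and $\POT{\kappa}\setminus(\calU_0\cup\calU_1)$, which establishes the $\kappa$-Baire property of $\calA$. The delicate point is calibrating $M$ with enough closure to guarantee both absoluteness of the $\Sigma_1$-forcing relation and transitivity of $M[G_A]$ inside $V$, and this is exactly what the assumption $\kappa=\kappa^{{<}\kappa}$ makes available, by trimming $\Add{\kappa}{1}$ down to size $\kappa$ and permitting the construction of $M$ at that size.
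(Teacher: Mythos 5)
Your proposal is correct and follows essentially the same route as the paper: a transitive (elementary) submodel $M\prec\HH{\kappa^+}$ of size $\kappa$ with ${}^{{<}\kappa}2\cup\{z\}\subseteq M$, the observation that the $M$-generic subsets of $\kappa$ form a comeager set, an open set built from conditions forcing $\varphi_0$, and upward $\Sigma_1$-absoluteness from $M[G_A]$ to $V$. The only real difference is that you define $\calU_0$ from the forcing relation computed in $V$ and therefore need the downward transfer $s\Vdash_V\varphi_i(\dot{G},\check{z})\Rightarrow s\Vdash^M\varphi_i(\dot{G},\check{z})$ --- which you rightly isolate as the delicate point, and which does hold for such an $M$ because the $\Sigma_1$ forcing relation is $\Sigma_1$-expressible over $\HH{\kappa^+}$ and hence absolute by L\'evy reflection and elementarity --- whereas the paper avoids this direction entirely by defining its open set directly from $\Vdash^M$ and instead pushing the totality statement $\mathbbm{1}\Vdash\anf{\forall A\,[\varphi_0\vee\varphi_1]}$ down into $M$.
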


\begin{proof}
  Fix $z\in\HH{\kappa^+}$ and  $\Sigma_1$-formulas $\varphi_0(v_0,v_1)$ and $\varphi_1(v_0,v_1)$ witnessing that a subset $\calA$ of $\POT{\kappa}$ is $\Add{\kappa}{1}$-absolutely $\mathbf{\Delta}_1$-definable. Then 
  \begin{equation}\label{equation:ForcingDelta}
 \mathbbm{1}_{\Add{\kappa}{1}}\Vdash\anf{\forall A\subseteq\check{\kappa} ~ [\varphi_0(A,\check{z})\vee\varphi_1(A,\check{z})]}.
\end{equation}  
 holds. 
 Given an $\Add{\kappa}{1}$-name $\dot{A}$ for a subset of $\kappa$ with  $\dot{A}\in\HH{\kappa^+}$, our assumptions ensure that the statement $$\mathbbm{1}_{\Add{\kappa}{1}}\Vdash\anf{\varphi_0(\dot{A},\check{z})\vee\varphi_1(\dot{A},\check{z})}$$  can be formulated by a $\Sigma_1$-formula with parameters in $\HH{\kappa^+}$ (see {\cite[SectionVII.3]{MR597342}}) and therefore $\Sigma_1$-absoluteness ensures that this statement holds in $\HH{\kappa^+}$.  In particular, we know that \eqref{equation:ForcingDelta} holds in $\HH{\kappa^+}$.

  Let $M$ be an elementary submodel of $\HH{\kappa^+}$ of cardinality $\kappa$ with the property that $\kappa^{{<}\kappa}\cup\{z\}\subseteq M$. Then $M$ is transitive and   elementarity implies that \eqref{equation:ForcingDelta} holds in $M$.
   Let $\seq{\calC_\alpha}{\alpha<\kappa}$ enumerate all closed nowhere dense subsets of $\POT{\kappa}$ that are elements of $M$ and  let $\dot{A}$ denote the canonical $\Add{\kappa}{1}$-name for the generic subset of $\kappa$, {i.e.,} we have $\dot{A}^G=\Set{\alpha<\kappa}{(\bigcup G)(\alpha)=1}$ whenever $G$ is $\Add{\kappa}{1}$-generic over $V$. Given a condition $s$ in $\Add{\kappa}{1}$, we set $\beta_s=\dom{s}$ and $b_s=\Set{\alpha<\beta_s}{s(\alpha)=1}$. 
   Now, define $$\calU ~ = ~ \bigcup\Set{\calN_{\beta_s,b_s}}{s\Vdash^M_{\Add{\kappa}{1}}\varphi_0(\dot{A},\check{z})} ~ \subseteq ~ \POT{\kappa}.$$

 \begin{claim*}
  $\calA\setminus\calU \subseteq\bigcup\Set{\calC_\alpha}{\alpha<\kappa}$. 
 \end{claim*}

 \begin{proof}[Proof of the Claim]
  Pick $A\in\calA\setminus\bigcup\Set{\calC_\alpha}{\alpha<\kappa}$ and let $\map{x}{\kappa}{2}$ denote the characteristic function of $A$.  
  Since $A$ is an element of every dense open subset of $\POT{\kappa}$ that is an element of $M$, 
  it follows that the filter $G_A=\Set{x\restriction\alpha}{\alpha<\kappa}$ is $\Add{\kappa}{1}$-generic over $M$ with $A=\dot{A}^{G_A}\in M[G_A]$. 
    Moreover, since $A\in\calA$, we know that $\varphi_1(A,z)$ does not hold in $V$ and hence $\Sigma_1$-upwards absoluteness implies that  $\varphi_1(A,z)$ fails in $M[G_A]$. 
    The fact that \eqref{equation:ForcingDelta} holds in $M$ then ensures that $\varphi_0(A,z)$ holds in $M[G_A]$ and therefore we can find  $\alpha<\kappa$ such that   $$x{\restriction}\alpha ~ \Vdash^M_{\Add{\kappa}{1}} ~ \varphi_0(\dot{A},\check{z})$$ holds.  This allows us to conclude that $A\in\calN_{\beta_{x\restriction\alpha},b_{x\restriction\alpha}}\subseteq\calU$. 
 \end{proof}

 \begin{claim*}
  $\calU\setminus\calA\subseteq\bigcup\Set{\calC_\alpha}{\alpha<\kappa}$. 
 \end{claim*}

 \begin{proof}[Proof of the Claim]
  Pick $A\in\calU\setminus\bigcup\Set{\calC_\alpha}{\alpha<\kappa}$ and let $\map{x}{\kappa}{2}$ denote the corresponding characteristic function. As above, we know that the  filter $G_A=\Set{x\restriction\alpha}{\alpha<\kappa}$ is $\Add{\kappa}{1}$-generic over $M$ with $A=\dot{A}^{G_A}\in M[G_A]$. Since $A$ is an element of $\calU$, it follows that $\varphi_0(A,z)$ holds in $M[G_A]$ and therefore $\Sigma_1$-upwards absoluteness implies that $A\in\calA$. 
 \end{proof}

 In combination, the above claims show that the open subset $\calU$ and the sequence $\seq{\calC_\alpha}{\alpha<\kappa}$ witness that $\calA$ has the $\kappa$-Baire property.  
\end{proof}

Our next aim is to show that, consistently, all $\mathbf{\Delta}_1$-subsets of $\POT{\kappa}$ can be   $\Add{\kappa}{1}$-absolutely $\mathbf{\Delta}_1$-definable. For the proof of this consistency result, we need a well-known observation that goes back to the work of Silver.

\begin{lemma}\label{lemma:Silver}
 If $\kappa$ is an infinite regular cardinal, $\PPP$ is a ${<}\kappa$-closed partial order, $\varphi(v_0,\ldots,v_n)$ is a $\Sigma_0$-formula, $\dot{A}$ is a $\PPP$-name for a subset of $\kappa$ and $B_0,\ldots,B_{n-1}\subseteq\kappa$ with $\mathbbm{1}_\PPP\Vdash\varphi(\dot{A},\check{B}_0,\ldots,\check{B}_{n-1})$, then there is $A\subseteq\kappa$ with $\varphi(A,B_0,\ldots,B_{n-1})$. 
\end{lemma}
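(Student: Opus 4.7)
The plan is to first build, by transfinite recursion of length $\kappa$ and using the ${<}\kappa$-closure of $\PPP$, a decreasing sequence $\seq{p_\alpha}{\alpha<\kappa}$ of conditions in $\PPP$ together with sets $A_\alpha\subseteq\alpha$ in $V$ such that $p_\alpha\Vdash\dot{A}\cap\check{\alpha}=\check{A}_\alpha$ for every $\alpha<\kappa$. At successor stages I would extend $p_\alpha$ to a condition that decides the sentence ``$\check\alpha\in\dot{A}$'', and at limit stages $\lambda<\kappa$ the closure hypothesis provides a lower bound of the previously chosen conditions while the $A_\beta$'s cohere to yield $A_\lambda=\bigcup_{\beta<\lambda}A_\beta\subseteq\lambda$. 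The coherence $A_\alpha=A_\beta\cap\alpha$ for $\alpha\leq\beta$ then lets me set $A=\bigcup_{\alpha<\kappa}A_\alpha\in V$.

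Next I would verify that $V\models\varphi(A,B_0,\ldots,B_{n-1})$. Any $V$-generic filter $G\subseteq\PPP$ that meets every $p_\alpha$ will satisfy $\dot{A}^G=A$, because $\dot{A}^G\cap\alpha=A_\alpha=A\cap\alpha$ for every $\alpha<\kappa$; combined with the hypothesis $\mathbbm{1}_\PPP\Vdash\varphi(\dot{A},\check{B}_0,\ldots,\check{B}_{n-1})$ this yields $V[G]\models\varphi(A,B_0,\ldots,B_{n-1})$. The $\Sigma_0$-absoluteness of $\varphi$ between the transitive models $V$ and $V[G]$, both of which contain the parameters $A,B_0,\ldots,B_{n-1}$, then transfers the conclusion back to~$V$.

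The main obstacle is the existence of such a filter $G$: ${<}\kappa$-closure furnishes lower bounds only for decreasing sequences of length strictly less than $\kappa$, so the $\kappa$-sequence $\seq{p_\alpha}{\alpha<\kappa}$ itself need not admit a lower bound in $V$ and correspondingly no $V$-generic meeting every $p_\alpha$ need exist inside $V$. This is circumvented by passing to a sufficiently rich ambient model---for instance a further generic extension of $V$ that renders $V$ countable---in which a $V$-generic $G\subseteq\PPP$ extending the filter generated by $\Set{p_\alpha}{\alpha<\kappa}$ does exist; the $\Sigma_0$-absoluteness of the statement $\varphi(A,B_0,\ldots,B_{n-1})$ between that ambient universe, $V[G]$, and $V$ then delivers the desired conclusion.
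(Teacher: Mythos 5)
The construction of the descending sequence $\seq{p_\alpha}{\alpha<\kappa}$ and of $A=\bigcup_{\alpha<\kappa}A_\alpha$ is fine, but the verification step fails, and not merely for lack of justification: the set $A$ you build need not satisfy $\varphi$ at all. Take $\PPP=\Add{\kappa}{1}$, let $\dot{A}$ be the canonical name for the generic subset of $\kappa$, let $B_0=\kappa$ and let $\varphi(A,B_0)$ be the $\Sigma_0$-formula $\forall\alpha\in B_0\,\exists\beta\in B_0\,(\alpha\in\beta\wedge\beta\in A)$, i.e., \anf{$A$ is unbounded in $\kappa$}. Then $\mathbbm{1}_\PPP\Vdash\varphi(\dot{A},\check{B}_0)$, but if at each successor stage you decide $\anf{\check{\alpha}\in\dot{A}}$ negatively (i.e., take $p_\alpha$ to be the constant zero function on $\alpha$), your construction yields $A=\emptyset$, which fails $\varphi$. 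The proposed rescue is also unavailable: in this example the $p_\alpha$ are the initial segments of a single $x\in{}^{\kappa}2\cap V$, any filter containing them all consists exactly of the initial segments of $x$ (compatibility in $\Add{\kappa}{1}$ is comparability), and such a filter misses the dense set $\Set{s}{\exists\gamma\in\dom{s}\ s(\gamma)\neq x(\gamma)}\in V$. So no $V$-generic filter through the whole sequence exists in \emph{any} outer model, and passing to a collapse of $V$ does not help. The underlying point is that a descending $\kappa$-sequence obtained by deciding $\dot{A}$ bit by bit produces a ground-model object about which the hypothesis $\mathbbm{1}_\PPP\Vdash\varphi(\dot{A},\ldots)$ says nothing.

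The missing idea is to make the construction track the formula $\varphi$ itself, which is what the paper's proof does. Since $\varphi$ is $\Sigma_0$, an elementary submodel argument in the extension gives a $\PPP$-name $\dot{C}$ for a club of $\alpha<\kappa$ at which $\varphi(\dot{A}\cap\alpha,\vec{B}\cap\alpha)$ holds; one then interleaves your construction with this name, choosing $p_\alpha$ to force $\anf{\check{\beta}_\alpha\in\dot{C}\wedge\dot{A}\cap\check{\beta}_\alpha=\check{a}_\alpha}$ for an increasing continuous sequence $\seq{\beta_\alpha}{\alpha<\kappa}$. If $\varphi(A,\vec{B})$ failed for $A=\bigcup_\alpha a_\alpha$, the ($\Sigma_0$) failure would reflect to some $\beta_\alpha$, and a generic filter containing just the \emph{single} condition $p_\alpha$ — which always exists — would witness $\varphi(A\cap\beta_\alpha,\vec{B}\cap\beta_\alpha)$ in the extension, contradicting $\Sigma_0$-absoluteness. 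This way one never needs a generic meeting all $\kappa$-many conditions, only one at a time.
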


\begin{proof}
 By a standard elementary submodel argument, there is a $\PPP$-name $\dot{C}$ for a closed unbounded subset of $\kappa$ with the property that whenever $G$ is $\PPP$-generic over $V$ and $\alpha\in\dot{C}^G$, then $\varphi(\dot{A}^G\cap\alpha,B_0\cap\alpha,\ldots,B_{n-1}\cap\alpha)$ holds. 
 We can then construct a descending sequence $\seq{p_\alpha}{\alpha<\kappa}$ of conditions in $\PPP$, a strictly increasing, continuous sequence $\seq{\beta_\alpha}{\alpha<\kappa}$ of ordinals less than $\kappa$ and a sequence $\seq{a_\alpha}{\alpha<\kappa}$ of bounded subsets of $\kappa$ such that $$p_\alpha\Vdash_\PPP\anf{\check{\beta}_\alpha\in\dot{C}\wedge\dot{A}\cap\check{\beta}_\alpha=\check{a}_\alpha}$$ holds for all $\alpha<\kappa$. 
 Set $A=\bigcup\Set{a_\alpha}{\alpha<\kappa}\subseteq\kappa$ and assume, towards a contradiction, that $\varphi(A,B_0,\ldots,B_{n-1})$ does not hold. Another elementary submodel argument then yields an $\alpha<\kappa$ such that $\varphi(A\cap\beta_\alpha,B_0\cap\beta_\alpha,\ldots,B_{n-1}\cap\beta_\alpha)$ fails. 
 Let $G$ be $\PPP$-generic over $V$ with $p_\alpha\in G$. Then $A\cap\beta_\alpha=\dot{A}^G\cap\beta_\alpha$ and $\beta_\alpha\in\dot{C}^G$. But, this implies that $\varphi(A\cap\beta_\alpha,B_0\cap\beta_\alpha,\ldots,B_{n-1}\cap\beta_\alpha)$ holds in $V[G]$, contradicting the fact that $\Sigma_0$-formulas with parameters in $V$ are absolute between $V$ and $V[G]$. 
\end{proof}

We now show that the forcing $\Add{\kappa}{\kappa^+}$ that adds $\kappa^+$-many Cohen subsets to an infinite regular cardinal $\kappa$ allows us to construct the desired model of set theory.

\begin{lemma}\label{lemma:AddDeltaAbsoDelta}
 Let $\kappa$ be an infinite cardinal with  $\kappa=\kappa^{{<}\kappa}$ and let $G$ be $\Add{\kappa}{\kappa^+}$-generic over $V$. In $V[G]$, every $\mathbf{\Delta}_1$-subset of $\POT{\kappa}$ is $\Add{\kappa}{1}$-absolutely $\mathbf{\Delta}_1$-definable.  
\end{lemma}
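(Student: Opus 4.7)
The plan is to reduce to the case where the $\mathbf{\Delta}_1$-parameter $z$ lies in an intermediate model over which $G$ still induces an $\Add{\kappa}{\kappa^+}$-generic, and then to exploit the weak homogeneity of $\Add{\kappa}{\kappa^+}$ together with the self-absorption isomorphism $\Add{\kappa}{\kappa^+}\times\Add{\kappa}{1}\cong\Add{\kappa}{\kappa^+}$.

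Fix a $\mathbf{\Delta}_1$-subset $\calA$ of $\POT{\kappa}$ in $V[G]$, witnessed by a $\Sigma_1$-formula $\varphi_0$, the negation of a $\Sigma_1$-formula $\varphi_1$, and a parameter $z\in\HH{\kappa^+}^{V[G]}$. First I would use the $\kappa^+$-chain condition of $\Add{\kappa}{\kappa^+}$ and a standard nice-name argument to find $X\subseteq\kappa^+$ of cardinality $\kappa$ with $z\in V_1:=V[G\restriction X]$. Since $\Add{\kappa}{X}$ is ${<}\kappa$-closed and of cardinality $\kappa$ in $V$, $V_1$ still satisfies $\kappa^{{<}\kappa}=\kappa$ and has the same $\kappa^+$ as $V$, and the forcings $\Add{\kappa}{\lambda}$ computed in $V$ and $V_1$ agree as partial orders. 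By the Product Lemma, $G_2:=G\restriction(\kappa^+\setminus X)$ is $\Add{\kappa}{\kappa^+\setminus X}^{V_1}$-generic over $V_1$ with $V[G]=V_1[G_2]$; and since $\vert\kappa^+\setminus X\vert=\kappa^+$, an index-set bijection yields $\Add{\kappa}{\kappa^+\setminus X}^{V_1}\cong\Add{\kappa}{\kappa^+}^{V_1}$.

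Next, the sentence $\psi\equiv\forall A\subseteq\kappa\,[\varphi_0(A,z)\vee\varphi_1(A,z)]$ holds in $V[G]=V_1[G_2]$, so some condition in $G_2$ forces it. As $\Add{\kappa}{\kappa^+}^{V_1}$ is weakly homogeneous and $z\in V_1$, the trivial condition in $V_1$ already forces $\psi(\check z)$. Now let $H$ be $\Add{\kappa}{1}$-generic over $V[G]$. By the Product Lemma, $G_2\times H$ is $\bigl(\Add{\kappa}{\kappa^+\setminus X}\times\Add{\kappa}{1}\bigr)^{V_1}$-generic over $V_1$, and combining disjoint indexing with a further bijection to $\kappa^+$ shows that this product is isomorphic in $V_1$ to $\Add{\kappa}{\kappa^+}^{V_1}$. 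Hence $V[G][H]=V_1[G_2\times H]$ is an $\Add{\kappa}{\kappa^+}^{V_1}$-generic extension of $V_1$, so $\psi$ holds in it. This establishes $\mathbbm{1}_{\Add{\kappa}{1}}\Vdash^{V[G]}\psi$, which is precisely the third clause in the definition of $\Add{\kappa}{1}$-absolute $\mathbf{\Delta}_1$-definability.

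The main obstacle I anticipate is bookkeeping rather than conceptual: careful verification that the partial orders $\Add{\kappa}{\lambda}$ computed in the three models $V$, $V_1$, and $V[G]$ coincide, that $V_1$ inherits enough closure and cardinal arithmetic from $V$ for the product-absorption isomorphism to be available inside $V_1$, and that weak homogeneity of $\Add{\kappa}{\kappa^+}^{V_1}$ genuinely applies with the parameter $\check z$.
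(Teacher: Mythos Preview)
Your argument is correct, and it reaches the conclusion by a genuinely different route from the paper. Both proofs pass through an intermediate model containing the parameter over which $V[G]$ is still an $\Add{\kappa}{\kappa^+}$-extension (the paper calls it $M$, you call it $V_1$), but they diverge in how the final step is carried out. The paper argues by contradiction: it fixes a name $\dot{A}$ for a putative counterexample in an $\Add{\kappa}{1}$-extension, finds a suitable $\Add{\kappa}{1}$-generic $H$ \emph{inside} $V[G]$ over $M$ with $V[G]$ still $\Add{\kappa}{\kappa^+}$-generic over $M[H]$, and then invokes a Silver-style reflection lemma (Lemma~\ref{lemma:Silver}) to pull a $\Sigma_0$-witness for $\varphi_i(\dot{A}^H,\cdot,C)$ from $V[G]$ down to $M[H]$. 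Your approach instead goes \emph{outward}: you use weak homogeneity to upgrade ``some condition forces $\psi$ over $V_1$'' to ``$\mathbbm{1}$ forces $\psi$ over $V_1$'', and then the self-absorption $\Add{\kappa}{\kappa^+}\times\Add{\kappa}{1}\cong\Add{\kappa}{\kappa^+}$ to exhibit $V[G][H]$ as another $\Add{\kappa}{\kappa^+}$-generic extension of $V_1$. Your route is a bit slicker in that it bypasses the Silver lemma entirely; the paper's route has the minor advantage that it never needs to pass to a model larger than $V[G]$ and makes explicit the $\Sigma_1$-expressibility of the relevant forcing statement. The bookkeeping you flag (absoluteness of $\Add{\kappa}{\lambda}$ between $V$, $V_1$, $V[G]$; preservation of $\kappa^{{<}\kappa}=\kappa$; applicability of weak homogeneity with the check-name $\check{z}$) is all routine and goes through as you indicate.
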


\begin{proof}
 Work in $V[G]$ and fix  a $\mathbf{\Delta}_1$-subset $\calA$ of $\POT{\kappa}$. We can then find $C\subseteq\kappa$ and $\Sigma_0$-formulas $\varphi_0(v_0,v_1,v_2)$ and $\varphi_1(v_0,v_1,v_2)$ such that $$\calA ~ = ~ \Set{A\subseteq\kappa}{\exists B\subseteq\kappa ~ \varphi_0(A,B,C)}$$ and $$\POT{\kappa}\setminus\calA ~ = ~ \Set{A\subseteq\kappa}{\exists B\subseteq\kappa ~ \varphi_1(A,B,C)}.$$ 
 Assume, towards a contradiction, that $$\mathbbm{1}_{\Add{\kappa}{1}}\not\Vdash\anf{\forall A\subseteq\check{\kappa} ~ \exists B\subseteq\check{\kappa} ~ [\varphi_0(A,B,\check{C})\vee \varphi_1(A,B,\check{C})]}.$$ 
 Since $\Add{\kappa}{1}$ satisfies the $\kappa^+$-chain condition, we can now find a condition $s$ in $\Add{\kappa}{1}$ and an $\Add{\kappa}{1}$-name $\dot{A}$ for a subset of $\kappa$ in $\HH{\kappa^+}$ with 
  \begin{equation}\label{equation:NegDelta}
   s\Vdash_{\Add{\kappa}{1}}\anf{\forall B\subseteq\check{\kappa} ~ [\neg\varphi_0(\dot{A},B,\check{C})\wedge\neg\varphi_1(\dot{A},B,\check{C})]}.
  \end{equation}
Then, there exists an inner model $M$ of $V[G]$ such that $\dot{A},C\in M$ and $V[G]$ is an $\Add{\kappa}{\kappa^+}$-generic extension of $M$. 
  In this situation, there is  $H\in V[G]$ such that $H$ is $\Add{\kappa}{1}$-generic over $M$, $s\in H$ and $V[G]$ is an $\Add{\kappa}{\kappa^+}$-generic extension of $M[H]$. 
  We can now find  $i<2$ and $\bar{B}\in\POT{\kappa}^{V[G]}$ such that $\varphi_i(\dot{A}^H,\bar{B},C)$ holds in $V[G]$. 
  By Lemma \ref{lemma:Silver}, we can find $B\in\POT{\kappa}^{M[H]}$ such that $\varphi(\dot{A}^H,B,C)$ holds in $M[H]$. 
  These computations  yield an extension $t$ of $s$ in $\Add{\kappa}{1}$ such that  
  \begin{equation}\label{equation:ForceSigma}
    t\Vdash_{\Add{\kappa}{1}}\anf{\exists B\subseteq\check{\kappa} ~ \varphi_i(\dot{A},B,\check{C})}.
   \end{equation}
   holds in $M$. Since \eqref{equation:ForceSigma} can be expressed by a $\Sigma_1$-formula, it follows that this statement also holds in $V[G]$, contradicting the fact that \eqref{equation:NegDelta} holds in $V[G]$.  
\end{proof}

 The above lemma now allows us to use class forcing to construct a model of $\ZFC$ in which  non-stationary ideals at uncountable regular cardinals are not $\mathbf{\Delta}_1$-definable.

 \begin{theorem}
  Assume that the  $\GCH$ holds in the ground model $V$ and $V[G]$ is a class forcing extension of $V$ obtained through the standard $\Ord$-length forcing iteration with Easton support that adds  $\kappa^+$-many Cohen subsets to every infinite regular cardinal $\kappa$. 
  Then $V[G]$ is a cofinality-preserving class forcing extension of $V$ and, in $V[G]$, if $\kappa$ is an infinite regular cardinal, then every  $\mathbf{\Delta}_1$-subset of $\POT{\kappa}$ has the $\kappa$-Baire property.    
 \end{theorem}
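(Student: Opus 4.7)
The plan is to reduce to a direct application of Lemmas \ref{lemma:AddDeltaAbsoDelta} and \ref{lemma:AbsoBaireProperty} by factorizing the Easton iteration $\PPP$ so that $V[G]$ arises as the result of adjoining $\Add{\kappa}{\kappa^+}$ to a suitable intermediate model. Cofinality-preservation itself is the standard consequence of Easton support under $\GCH$ in $V$: at each infinite regular cardinal $\kappa$, the initial segment $\PPP_{<\kappa}$ has cardinality at most $\kappa$ and satisfies the $\kappa^+$-chain condition, while the tail of the iteration is forced to be ${<}\kappa^+$-closed. I will simply cite this.

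Fix an infinite regular cardinal $\kappa$ in $V[G]$. Since cardinals and cofinalities are preserved and $\GCH$ holds in $V$, the $\PPP_{<\kappa}$-name for the iteration at stages $\geq\kappa$ is forced to coincide with the canonical $V$-definable iteration built from the sequence $\seq{\Add{\lambda}{\lambda^+}^V}{\lambda\geq\kappa\text{ regular}}$, and hence $\PPP$ is forcing-equivalent to a $V$-definable product $\PPP_{<\kappa}\times\PPP_{\geq\kappa}$. A second factorization splits $\PPP_{\geq\kappa}$ into $\Add{\kappa}{\kappa^+}^V\times\PPP_{>\kappa}$, where $\PPP_{>\kappa}$ is $V$-definable and ${<}\kappa^+$-closed (each factor at a regular $\lambda>\kappa$ is $<\lambda$-closed, and Easton support takes inverse limits at cofinalities at most $\kappa$). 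Writing $G=G_{<\kappa}\times G(\kappa)\times G_{>\kappa}$ accordingly, mutual genericity in the three-component $V$-product yields that $G(\kappa)$ is $\Add{\kappa}{\kappa^+}$-generic over the intermediate model $W=V[G_{<\kappa}][G_{>\kappa}]$ and that $V[G]=W[G(\kappa)]$. A routine computation using $\GCH$ in $V$, the chain condition of $\PPP_{<\kappa}$, and the closure of $\PPP_{>\kappa}$ then yields $\kappa^{{<}\kappa}=\kappa$ both in $W$ and in $V[G]$.

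With this set-up, Lemma \ref{lemma:AddDeltaAbsoDelta} applied \emph{over $W$} immediately gives that every $\mathbf{\Delta}_1$-subset of $\POT{\kappa}$ in $V[G]=W[G(\kappa)]$ is $\Add{\kappa}{1}$-absolutely $\mathbf{\Delta}_1$-definable. Since $\kappa^{{<}\kappa}=\kappa$ holds in $V[G]$, Lemma \ref{lemma:AbsoBaireProperty} then yields that each such set has the $\kappa$-Baire property, completing the proof.

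The main obstacle is the rigorous verification that $G(\kappa)$ can be viewed as the \emph{last} generic adjoined, so that Lemma \ref{lemma:AddDeltaAbsoDelta} may be invoked with $W$ playing the role of the ground model. This rests on Easton's lemma, which guarantees that the $\kappa^+$-chain condition of $\Add{\kappa}{\kappa^+}$ and the ${<}\kappa^+$-closure of $\PPP_{>\kappa}$ coexist without destroying either property and thus give the mutual genericity needed to commute the two forcings. Once this factorization is secured, the remainder of the argument is simply a composition of the two cited lemmas.
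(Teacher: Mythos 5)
Your overall architecture --- realize the stage-$\kappa$ forcing as the last relevant generic and then compose Lemmas \ref{lemma:AddDeltaAbsoDelta} and \ref{lemma:AbsoBaireProperty} --- is the same as the paper's, but the factorization you use to get there has a genuine gap. The theorem concerns the Easton-support \emph{iteration}, in which the iterand at stage $\lambda$ is $\Add{\lambda}{\lambda^+}$ \emph{as computed in $V[G\restriction\lambda]$}. For $\lambda\geq\kappa>\omega$ the model $V[G\restriction\lambda]$ contains bounded subsets of $\lambda$ that are not in $V$ (already the stage-$\omega$ factor adds new reals), so $\Add{\lambda}{\lambda^+}^{V[G\restriction\lambda]}$ properly contains $\Add{\lambda}{\lambda^+}^V$, and your claim that the tail of the iteration at stages $\geq\kappa$ coincides with the $V$-definable product of the posets $\Add{\lambda}{\lambda^+}^V$ is false. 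The same phenomenon defeats the key step: even if one works with the Easton product, the filter $G(\kappa)$ is generic over $W=V[G_{<\kappa}][G_{>\kappa}]$ for the poset $\Add{\kappa}{\kappa^+}^V$, which is a proper and non-dense subposet of $\Add{\kappa}{\kappa^+}^W$ (a condition in $W$ coding a new bounded subset of $\kappa$ has no extension lying in $V$), and $\Add{\kappa}{\kappa^+}^V$ is not even ${<}\kappa$-closed in $W$ (a descending $\omega$-sequence of $V$-conditions chosen in $W$ along a new real has no lower bound in $V$). Lemma \ref{lemma:AddDeltaAbsoDelta} is stated and proved for $\Add{\kappa}{\kappa^+}$ as computed in the model over which one forces: its proof factors through inner models over which the remaining forcing is again $\Add{\kappa}{\kappa^+}$ of that model and invokes Lemma \ref{lemma:Silver}, which requires ${<}\kappa$-closure. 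So the lemma cannot be applied over $W$ to the poset $\Add{\kappa}{\kappa^+}^V$ without a further (and nontrivial) argument that the two forcings are equivalent over $W$.

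The repair is exactly what the iteration structure supplies and what the paper's factorization records. Take $M=V[G\restriction\kappa]$, so that the stage-$\kappa$ iterand genuinely is $\Add{\kappa}{\kappa^+}^M$ and $N=M[G(\kappa)]$ genuinely is an $\Add{\kappa}{\kappa^+}$-generic extension of $M$, with $\kappa^{{<}\kappa}=\kappa$ in $M$ by the standard Easton computation; then use the ${<}\kappa^+$-closure of the tail of the iteration beyond stage $\kappa$ to conclude $\HH{\kappa^+}^{V[G]}=\HH{\kappa^+}^{N}\subseteq N$, instead of trying to absorb $G_{>\kappa}$ into the ground model. Since $\Sigma_1$-statements with parameters in $\HH{\kappa^+}$ reflect to $\HH{\kappa^+}$, the $\mathbf{\Delta}_1$-subsets of $\POT{\kappa}$, as well as the open sets and $\kappa$-sequences of nowhere dense sets witnessing the $\kappa$-Baire property, are computed identically in $N$ and in $V[G]$; hence applying Lemmas \ref{lemma:AddDeltaAbsoDelta} and \ref{lemma:AbsoBaireProperty} inside $N$ yields the conclusion in $V[G]$. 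With this adjustment the rest of your argument, including the citation of the standard Easton arguments for cofinality preservation, goes through.
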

 
 \begin{proof}
  Standard factoring arguments (see {\cite[p. 233]{MR1940513}}) show that for every infinite regular cardinal $\kappa$ in $V$, there are inner models $M$ and $N$ of $V[G]$ with the property that $V\subseteq M\subseteq N$, $M$ is a cofinality-preserving extension of $V$, $N$ is an $\Add{\kappa}{\kappa^+}$-generic extension of $M$ and $\HH{\kappa^+}^{V[G]}\subseteq N$. 
  The conclusion of the theorem then follows from applications of Lemma \ref{lemma:AbsoBaireProperty} and Lemma \ref{lemma:AddDeltaAbsoDelta}.  
 \end{proof}

 The above theorem shows that, if $\ZFC$ is consistent, then this theory does not prove that there is an uncountable regular cardinal $\kappa$ with the property that there exists a $\mathbf{\Delta}_1$-subset of $\POT{\kappa}$ that separates  $\Club{\kappa}$ from $\NS{\kappa}$. Moreover, since the class forcing used in the above proof is known to preserve various large cardinals, the above argument shows that the existence of an uncountable regular cardinal $\kappa$ with the property that  there exists a $\mathbf{\Delta}_1$-subset of $\POT{\kappa}$ that separates  $\Club{\kappa}$ from $\NS{\kappa}$  is also not provable from extensions of $\ZFC$ by large cardinal axioms.


\subsection{Large cardinals}

In this section, we will present an argument due to S. Friedman and Wu in \cite{FriedmanWu} that shows that sufficiently strong large cardinal properties of a cardinal $\kappa$ ensure that $\Club{\kappa}$ is not $\mathbf{\Delta}_1$-definable. 
 We will present a proof of this result that makes use of classical ideas from  descriptive set theory.

\begin{definition}
  Given a collection $\Gamma$ of subsets of a topological space $\calX$, we say that an element $\calA$ of $\Gamma$ is \emph{complete for $\Gamma$} if for every element $\calB$ of $\Gamma$, there is a continuous function $\map{f}{\calX}{\calX}$ with $\calB=f^{{-}1}[\calA]$. 
\end{definition}

\begin{proposition}\label{proposition:DST}
 If  $\Gamma$ is  a collection of subsets of a topological space $\calX$ that is closed under preimages of continuous functions from $\calX$ to $\calX$  and $\calA$ is complete for $\Gamma$ with the property that $\calX\setminus\calA\in\Gamma$, then $\Gamma$ is closed under complements in $\calX$. 
\end{proposition}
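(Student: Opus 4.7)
The plan is quite short, as the statement is essentially a one-line observation from the closure properties of $\Gamma$.

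First I would unpack the definitions. Fix an arbitrary $\calB \in \Gamma$; the goal is to produce $\calX \setminus \calB \in \Gamma$. Since $\calA$ is complete for $\Gamma$, there exists a continuous function $\map{f}{\calX}{\calX}$ with $\calB = f^{{-}1}[\calA]$.

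The key step is the elementary set-theoretic identity
\begin{equation*}
 \calX \setminus \calB ~ = ~ \calX \setminus f^{{-}1}[\calA] ~ = ~ f^{{-}1}[\calX \setminus \calA],
\end{equation*}
which uses only that $f$ is a function from $\calX$ to $\calX$. By assumption, $\calX \setminus \calA$ is itself an element of $\Gamma$, and $\Gamma$ is closed under preimages of continuous functions $\calX\to\calX$. Applying this closure to $f$ and $\calX\setminus\calA$ yields $f^{{-}1}[\calX \setminus \calA] \in \Gamma$, hence $\calX \setminus \calB \in \Gamma$.

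Since $\calB \in \Gamma$ was arbitrary, this establishes closure of $\Gamma$ under complements in $\calX$. There is no real obstacle here; the point of the proposition is just to record the standard descriptive-set-theoretic transfer principle that a complete set whose complement also lies in $\Gamma$ forces $\Gamma$ to be self-dual, and the proof will be deployed later by contraposition to rule out $\mathbf{\Delta}_1$-definability via a completeness statement for $\Sigma_1$ or $\Pi_1$ sets.
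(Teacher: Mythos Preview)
Your proof is correct and matches the paper's argument essentially line for line: pick $\calB\in\Gamma$, use completeness of $\calA$ to get a continuous $f$ with $\calB=f^{{-}1}[\calA]$, observe $\calX\setminus\calB=f^{{-}1}[\calX\setminus\calA]$, and conclude by closure of $\Gamma$ under continuous preimages.
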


\begin{proof}
 Pick $\calB\in\Gamma$. Then there is a continuous function $\map{f}{\calX}{\calX}$ with $\calB=f^{{-}1}[\calA]$. In this situation, we know that $\calX\setminus\calB=f^{{-}1}[\calX\setminus\calA]$ and $\calX\setminus\calA\in\Gamma$. Hence, the closure of $\Gamma$ under continuous preimages ensures that $\calX\setminus\calB\in\Gamma$.  
\end{proof}

We now prove two lemmata that allow us to apply the above proposition to the collection of all subsets of the power set of an inaccessible cardinal that are definable by $\Sigma_1$-formulas with parameters in $\HH{\kappa^+}$.

\begin{lemma}\label{lemma:Preimage}
 Given an infinite cardinal $\kappa$ satisfying $\kappa=\kappa^{{<}\kappa}$, the collection of subsets of $\POT{\kappa}$ that are  definable by  $\Sigma_1$-formulas with parameters in $\HH{\kappa^+}$  is closed under  continuous preimages. 
\end{lemma}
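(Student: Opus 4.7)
The plan is to code the continuous function $f\colon\POT{\kappa}\to\POT{\kappa}$ by a set in $\HH{\kappa^+}$ and then rewrite the $\Sigma_1$-definition of $\calA$ as a $\Sigma_1$-definition of $f^{{-}1}[\calA]$ using this code as an extra parameter. Identifying $\POT{\kappa}$ with ${}^\kappa 2$ via characteristic functions, write $[s]=\Set{y\in{}^\kappa 2}{s\subseteq y}$ for the basic open set associated with $s\in{}^{{<}\kappa}2$. Since $\kappa^{{<}\kappa}=\kappa$, the set ${}^{{<}\kappa}2$ has cardinality $\kappa$ and therefore lies in $\HH{\kappa^+}$.

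The first step is to form the relation
$$F ~ = ~ \Set{\langle s,t\rangle\in{}^{{<}\kappa}2\times{}^{{<}\kappa}2}{f[[s]]\subseteq[t]}.$$
This relation has cardinality at most $\kappa$ and hence lies in $\HH{\kappa^+}$. Continuity of $f$, together with the fact that the cones $[s]$ form a basis for the topology on ${}^\kappa 2$, guarantees that for every $y\in{}^\kappa 2$ an element $x\in{}^\kappa 2$ equals $f(y)$ if and only if, for every $t\in{}^{{<}\kappa}2$, the initial segment condition $t\subseteq x$ holds precisely when there is some $s\in{}^{{<}\kappa}2$ with $s\subseteq y$ and $\langle s,t\rangle\in F$.

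Now fix a $\Sigma_1$-formula $\varphi(v_0,v_1)$ and a parameter $z\in\HH{\kappa^+}$ with $\calA=\Set{A\subseteq\kappa}{\varphi(A,z)}$. The previous analysis yields the equivalence
$$y\in f^{{-}1}[\calA]~\Longleftrightarrow~\exists x\subseteq\kappa~[(x=f(y))\wedge\varphi(x,z)],$$
in which the relation $x=f(y)$ is expressible by the bounded formula
$$\forall t\in{}^{{<}\kappa}2~\bigl[(t\subseteq x)\leftrightarrow\exists s\in{}^{{<}\kappa}2~(s\subseteq y\wedge\langle s,t\rangle\in F)\bigr].$$
Both quantifiers are bounded by the set ${}^{{<}\kappa}2\in\HH{\kappa^+}$, and all remaining atomic subformulas (like $s\subseteq y$ or $t\subseteq x$) are already $\Sigma_0$ in the appropriate parameters. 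Merging this bounded condition with the existential quantifier of $\varphi$ and the outer $\exists x$ then produces a single $\Sigma_1$-formula that defines $f^{{-}1}[\calA]$ using the parameters $F$, $z$, $\kappa$, and ${}^{{<}\kappa}2$, all of which lie in $\HH{\kappa^+}$.

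The crux of the argument is the coding step: under the cardinal arithmetic assumption $\kappa^{{<}\kappa}=\kappa$, the continuous function $f$ is completely captured by the relation $F\in\HH{\kappa^+}$ in such a way that the graph of $f$ becomes $\Sigma_0$-definable with parameter $F$ between subsets of $\kappa$. Once this observation is secured, the passage from $\Sigma_1$-definability of $\calA$ to $\Sigma_1$-definability of its preimage under $f$ amounts to a routine rearrangement of quantifiers, and no further ideas are needed.
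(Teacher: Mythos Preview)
Your proof is correct and follows essentially the same approach as the paper: both arguments code the continuous function by the relation recording which basic open cones are mapped into which (your $F=\Set{\langle s,t\rangle}{f[[s]]\subseteq[t]}$ is exactly the paper's $B=\Set{\langle\beta,b,\gamma,c\rangle}{\calN_{\beta,b}\subseteq f^{-1}[\calN_{\gamma,c}]}$ in different notation), observe that $\kappa^{{<}\kappa}=\kappa$ puts this code into $\HH{\kappa^+}$, and then express membership in $f^{-1}[\calA]$ by existentially quantifying over a witness $Y=f(X)$ in $\calA$, where the equation $Y=f(X)$ becomes a $\Sigma_0$-condition in the code. The only cosmetic difference is that the paper works directly with $\POT{\kappa}$ and quadruples $\langle\beta,b,\gamma,c\rangle$, while you pass through ${}^\kappa 2$ and pairs of finite sequences.
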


\begin{proof}
 Fix a subset $\calA$ of $\POT{\kappa}$ that is definable by a $\Sigma_1$-formula with parameters in $\HH{\kappa^+}$ and a continuous function $\map{f}{\POT{\kappa}}{\POT{\kappa}}$. Let $B$ denote the set of all quadruples $\langle\beta,b,\gamma,c\rangle$ satisfying $\beta,\gamma<\kappa$, $b\subseteq\beta$, $c\subseteq\gamma$ and $\calN_{\beta,b}\subseteq f^{{-}1}[\calN_{\gamma,c}]$. 
 Given $X\subseteq\kappa$, we then have $X\in f^{{-}1}[\calA]$ if and only if there is $Y\in\calA$ with the property that for all $\gamma<\kappa$, there is $\beta<\kappa$ with $\langle\beta,X\cap\beta,\gamma,Y\cap\gamma\rangle\in B$. Since the assumption  $\kappa=\kappa^{{<}\kappa}$ implies that $B$ is an element of $\HH{\kappa^+}$, we can conclude that the set $f^{{-}1}[\calA]$ is also definable by a $\Sigma_1$-formula with parameters in $\HH{\kappa^+}$. 
\end{proof}

\begin{lemma}\label{lemma:Complements}
  Given an infinite cardinal $\kappa$, the collection of subsets of $\POT{\kappa}$ that are definable by $\Sigma_1$-formulas with parameters in $\HH{\kappa^+}$ is not closed under complements in $\POT{\kappa}$. 
\end{lemma}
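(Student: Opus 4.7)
The plan is to argue by Cantor-style diagonalization using a universal $\Sigma_1$-set with parameters in $\HH{\kappa^+}$.

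First, I would fix, with parameters in $\HH{\kappa^+}$, a $\mathbf{\Delta}_1$-definable bijection $\pi:\POT{\kappa}\times\POT{\kappa}\to\POT{\kappa}$ (which exists for any infinite $\kappa$, coming from a bijection $\kappa\cong\kappa\sqcup\kappa$), together with a $\Sigma_1$-definable partial surjection $\map{\rho}{\POT{\kappa}}{\HH{\kappa^+}}$ obtained by taking Mostowski collapses of well-founded extensional binary relations on $\kappa$ (using that every element of $\HH{\kappa^+}$ is coded by such a relation, so $|\HH{\kappa^+}|\leq 2^\kappa$). Using in addition the standard fact that the $\Sigma_1$-satisfaction relation for formulas and parameters in $V$ is itself uniformly $\Sigma_1$-definable, I would then define a single universal set $\calU\subseteq\POT{\kappa}$ by declaring $\pi(A,c)\in\calU$ whenever $c\in\dom{\rho}$ and $\rho(c)$ decodes as a pair $\langle\lceil\varphi\rceil,p\rangle$ consisting of a Gödel number of a $\Sigma_1$-formula $\varphi$ and a parameter $p\in\HH{\kappa^+}$ such that $\varphi(A,p)$ holds. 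By construction, $\calU$ is definable by a $\Sigma_1$-formula with parameters in $\HH{\kappa^+}$, and it is \emph{universal} in the sense that every $\Sigma_1$-definable subset $\calA$ of $\POT{\kappa}$ with parameter in $\HH{\kappa^+}$ appears as a slice $\calA=\{A\in\POT{\kappa}:\pi(A,c_\calA)\in\calU\}$ for some $c_\calA\in\POT{\kappa}$.

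Next, assume toward a contradiction that the collection of $\Sigma_1$-definable subsets of $\POT{\kappa}$ with parameters in $\HH{\kappa^+}$ is closed under complements in $\POT{\kappa}$. Then $\POT{\kappa}\setminus\calU$ is $\Sigma_1$-definable with parameters in $\HH{\kappa^+}$, and hence so is the diagonal set
$$\calD ~ = ~ \{A\in\POT{\kappa} : \pi(A,A)\notin\calU\}.$$
By the universality of $\calU$, there is $c\in\POT{\kappa}$ with $\calD=\{A\in\POT{\kappa}:\pi(A,c)\in\calU\}$. Evaluating both characterizations of $\calD$ at $A=c$ yields
$$c\in\calD ~ \Longleftrightarrow ~ \pi(c,c)\in\calU ~ \Longleftrightarrow ~ \pi(c,c)\notin\calU,$$
which is the desired contradiction.

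The main obstacle is the careful construction of the universal set $\calU$: one needs to verify that the pairing function $\pi$, the decoding map $\rho$, and the $\Sigma_1$-satisfaction predicate can all be assembled into a single $\Sigma_1$-formula with a single parameter from $\HH{\kappa^+}$. The argument makes no use of the hypothesis $\kappa=\kappa^{{<}\kappa}$ of Lemma \ref{lemma:Preimage}; the key point for the general case is that $|\HH{\kappa^+}|\leq 2^\kappa$ holds for every infinite cardinal $\kappa$, which suffices to produce the decoding map $\rho$.
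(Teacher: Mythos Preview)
Your proposal is correct and takes essentially the same diagonal approach as the paper. The only difference is packaging: the paper invokes a universal $\Sigma_1$-formula $\varphi(v_0,\ldots,v_3)$ indexed by a natural number and codes the diagonal pair $(X,n)$ as $n\cup\{\omega+\alpha:\alpha\in X\}\subseteq\kappa$, whereas you build a universal $\Sigma_1$-set via a pairing bijection on $\POT{\kappa}$ together with a decoding surjection onto $\HH{\kappa^+}$, but both arguments are Cantor diagonalization against a universal $\Sigma_1$ object.
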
 
 
 \begin{proof}
 Pick  a \emph{universal $\Sigma_1$-formula $\varphi(v_0,\ldots,v_3)$}, {i.e.,} a $\Sigma_1$-formula $\varphi(v_0,\ldots,v_3)$ with the property that for every $\Sigma_1$-formula $\psi(v_0,v_1,v_2)$, there is a natural number $m$ such that the sentence 
 \begin{equation}\label{equation:UniversalFormula}
  \forall x,y,z ~ [\varphi(x,y,z,m)\longleftrightarrow\psi(x,y,z)]
 \end{equation}
 is provable in $\ZFC^-$ (see {\cite[Section 1]{JensenFine}}). Then the set $$\calA ~ = ~ \Set{n\cup\Set{\omega+\alpha}{\alpha\in X}}{\textit{$X\subseteq\kappa$ and $n<\omega$ with $\varphi(X,n,X,n)$}} ~ \subseteq ~ \POT{\kappa}$$ is definable by a $\Sigma_1$-formula with parameters in $\HH{\kappa^+}$. 
  Assume, towards a contradiction, that $\POT{\kappa}\setminus\calA$ is also definable by a $\Sigma_1$-formula with parameters in $\HH{\kappa^+}$. 
  Then we can find a $\Sigma_1$-formula $\psi(v_0,v_1,v_2)$ and  a subset $Y$ of $\kappa$ such that $$\psi(X,n,Y) ~ \longleftrightarrow ~ \neg\varphi(X,n,X,n)$$ holds for all $X\subseteq\kappa$ and $n<\omega$. 
 Now, pick a natural number $m$ such that \eqref{equation:UniversalFormula} holds with respect to this $\Sigma_1$-formula $\psi$. 
 We can then conclude that $$\varphi(Y,m,Y,m) ~ \longleftrightarrow ~ \neg\varphi(Y,m,Y,m),$$ a contradiction. 
\end{proof}

 Remember that a cardinal $\kappa$ is \emph{weakly compact} if and only if it is $\Pi^1_1$-indescribable, {i.e.,} if it has the property that for every $R\subseteq V_\kappa$ and every $\Pi^1_1$-sentence $\Phi$ with $\langle V_\kappa,\in,R\rangle\models\Phi$, there is an $\alpha<\kappa$ with the property that $\langle V_\alpha,\in,R\cap V_\alpha\rangle\models\Phi$. 
Following \cite{MR0281606}, we associate a filter to this reflection property:

\begin{definition}
 Given a weakly compact $\kappa$, the \emph{weakly compact filter} on $\kappa$ consists of all subsets $X$ of $\kappa$ with the property that there is a $\Pi^1_1$-sentence $\Psi$ and $R\subseteq V_\kappa$ with $\langle V_\kappa,\in,R\rangle\models\Phi$ and $\Set{\alpha<\kappa}{\langle V_\alpha,\in,R\cap V_\alpha\rangle\models\Phi} ~ \subseteq ~ X$. 
\end{definition}

A short argument then shows that the weakly compact filter is a normal filter on the given weakly compact cardinal (see \cite{MR0281606}). In particular, all elements of this collection are stationary. Moreover, it is easy to see that this filter contains all sets of the form $\Refl{S}$ for stationary subsets $S$ of the given weakly compact cardinal.

\begin{theorem}\label{theorem:WC}
 If $\kappa$ is a weakly compact cardinal and $S$ is an element of the weakly compact filter on $\kappa$, then $\Club{S}$ is  complete for the collection of subsets of $\POT{\kappa}$ that are definable by $\Sigma_1$-formulas with parameters in $\HH{\kappa^+}$. 
\end{theorem}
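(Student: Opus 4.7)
My plan is, given a $\Sigma_1$-definable set $\calB\subseteq\POT{\kappa}$ with parameter $p\in\HH{\kappa^+}$, to construct a continuous reduction $\map{f}{\POT{\kappa}}{\POT{\kappa}}$ with $\calB=f^{{-}1}[\Club{S}]$. The function $f$ will be built by a local-witness recipe: I place $\alpha\in f(X)$ precisely when the $\Sigma_1$-formula defining $\calB$ is witnessed inside the structure $V_\alpha$ after suitably truncating the parameters. The two inclusions will then come from two classical reflection properties of $\kappa$: downward transfer of $\Sigma_1$-witnesses to elementary substructures for $\calB\subseteq f^{{-}1}[\Club{S}]$, and $\Pi^1_1$-indescribability through the weakly compact filter for the opposite direction.

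For the local-witness construction I first need to bring the defining formula down to $V_\kappa$. Using a L\"owenheim--Skolem and Mostowski-collapse argument together with the fact that $\kappa$ is inaccessible, I would code $p$ by a subset $P\subseteq\kappa$ and replace the original $\Sigma_1$-formula by a $\Sigma_0$-formula $\psi(v_0,v_1,v_2,v_3)$ such that
\[ X\in\calB ~\Longleftrightarrow~ \exists Z\subseteq\kappa\,\,\psi(X,P,Z,\kappa), \]
exhibiting $\calB$ as a $\Sigma^1_1$-property of $\langle V_\kappa,\in,X,P\rangle$. I then set
\[ f(X) ~=~ \Set{\alpha<\kappa}{\exists Z\subseteq\alpha~\langle V_\alpha,\in\rangle\models\psi(X\cap\alpha,P\cap\alpha,Z,\alpha)}. \]
Whether $\alpha\in f(X)$ holds depends only on $X\cap\alpha$ and the fixed parameter $P$, so $f(X)\cap\alpha$ is determined by $X\cap\alpha$, which gives continuity of $f$.

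For $\calB\subseteq f^{{-}1}[\Club{S}]$, assume $X\in\calB$ is witnessed by $Z\subseteq\kappa$. The set $C$ of $\alpha<\kappa$ for which $\langle V_\alpha,\in,X\cap\alpha,P\cap\alpha,Z\cap\alpha\rangle$ is an elementary substructure of $\langle V_\kappa,\in,X,P,Z\rangle$ is closed unbounded. For every $\alpha\in C$, the truncation $Z\cap\alpha$ witnesses the local version of $\psi$ inside $V_\alpha$, so $\alpha\in f(X)$, and hence $C\subseteq f(X)$, yielding $f(X)\in\Club{\kappa}\subseteq\Club{S}$. For the reverse inclusion, suppose $X\notin\calB$. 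Then the $\Pi^1_1$-sentence $\forall Z\,\neg\psi(X,P,Z,\kappa)$ holds in $\langle V_\kappa,\in,X,P\rangle$, and by the definition of the weakly compact filter the set $T$ of $\alpha<\kappa$ where this same sentence holds in $\langle V_\alpha,\in,X\cap\alpha,P\cap\alpha\rangle$ belongs to the weakly compact filter on $\kappa$. For every $\alpha\in T$ no local witness exists in $V_\alpha$, so $\alpha\notin f(X)$. Since $S$ also lies in the weakly compact filter and that filter is normal, $T\cap S$ is stationary, so $S\setminus f(X)\supseteq T\cap S$ is stationary and $f(X)\notin\Club{S}$.

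The main obstacle is the opening reduction step: the $\Sigma_1$-formula defining $\calB$ must be rewritten into a $\Sigma_0$-formula $\psi$ that genuinely admits evaluation inside $\langle V_\alpha,\in,X\cap\alpha,P\cap\alpha\rangle$, so that the $\Pi^1_1$-reflection of its negation captures non-existence of a witness at the local parameters. This requires choosing the coding of $p$ so that for a club of $\alpha$ the truncation $P\cap\alpha$ correctly codes $\mathrm{trcl}(\{p\})\cap V_\alpha$, together with the L\"owenheim--Skolem argument reducing the search for witnesses to $\HH{\kappa^+}$-sized sets; once this coding is fixed, the remainder of the argument is routine.
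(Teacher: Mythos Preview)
Your proposal is correct and follows the same overall architecture as the paper's proof: build a Lipschitz reduction by a local-witness recipe, use an elementary-substructure/club argument for the forward inclusion, and use $\Pi^1_1$-reflection for the backward one. The technical execution differs in two places, and the trade-offs are worth noting.

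First, the paper does not pass through your $\Sigma^1_1$-coding step. Instead of rewriting the defining formula as a $\Sigma_0$-formula to be evaluated inside $V_\alpha$, the paper takes the original $\Sigma_1$-formula $\varphi(v_0,v_1)$ (with the parameter already coded as $Y\subseteq\kappa$) and simply sets $f(X)=\Set{\alpha\in S}{\varphi(X\cap\alpha,Y\cap\alpha)}$, evaluating $\varphi$ in $V$. This avoids the issue you flag as the ``main obstacle'' entirely; no care is needed about whether truncations of the coding remain meaningful inside $V_\alpha$. (As a side remark on your notation: writing $\psi(X,P,Z,\kappa)$ with $\kappa$ as a parameter and then $\psi(\ldots,\alpha)$ inside $V_\alpha$ is problematic since $\alpha\notin V_\alpha$; you should phrase this as a first-order formula in the structure $\langle V_\kappa,\in,X,P\rangle$ without the top ordinal as a parameter.)

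Second, for the backward direction, your argument is actually more self-contained than the paper's. The paper invokes an external elementary-embedding characterization of weak compactness (citing {\cite[Lemma 4.2 \& Corollary 4.3]{MR3913154}}) to produce a single reflection point $\bar{\kappa}\in C\cap S$ where $\varphi$ fails at the truncated parameters. You instead observe directly from the definition of the weakly compact filter that the set $T$ of reflection points for the $\Pi^1_1$-sentence $\forall Z\,\neg\psi$ lies in the filter, so $S\cap T$ is stationary. Both arguments work; yours stays closer to the filter definition and needs no external citation, while the paper's embedding route pairs naturally with its choice to evaluate $\varphi$ in $V$ rather than in $V_\alpha$.
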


\begin{proof}
 Let $\calA$ be a subset of $\POT{\kappa}$ that is definable by a $\Sigma_1$-formula with parameters in $\HH{\kappa^+}$. Then we can  find a $\Sigma_1$-formula $\varphi(v_0,v_1)$ and $Y\subseteq\kappa$ with the property that $\calA=\Set{X\subseteq\kappa}{\varphi(X,Y)}$. Define $$\Map{f}{\POT{\kappa}}{\POT{\kappa}}{X}{\Set{\alpha\in S}{\varphi(X\cap\alpha,Y\cap\alpha)}}.$$ 
 It is then easy to see that the function $f$ is \emph{Lipschitz}, {i.e.,} $f(X_0)\cap\alpha=f(X_1)\cap\alpha$ holds for all $\alpha<\kappa$ and all $X_0,X_1\subseteq\kappa$ with $X_0\cap\alpha=X_1\cap\alpha$. In particular, it follows that $f$ is a continuous function.

 \begin{claim*}
  If $X\in\calA$, then $f(X)\in\Club{S}$. 
 \end{claim*}
 
 \begin{proof}[Proof of the Claim]
  Pick an increasing and continuous chain\footnote{In the following, we say that a sequence $\seq{B_\alpha}{\alpha<\lambda}$ of sets is an increasing and continuous chain if $B_\alpha\subseteq B_\beta$ holds for all $\alpha<\beta<\lambda$ and $B_\beta=\bigcup\Set{B_\alpha}{\alpha<\beta}$ holds for all limit ordinals $\beta<\lambda$.} $\seq{N_\alpha}{\alpha<\kappa}$ of elementary submodels of $\HH{\kappa^+}$ with $X,Y\in N_0$ and $\alpha\subseteq N_\alpha\cap\kappa\in \kappa$ for all $\alpha<\kappa$. Then $C=\Set{N_\alpha\cap\kappa}{\alpha<\kappa}$ is a closed unbounded subset of $\kappa$. Fix $\alpha\in C\cap S$, pick $\beta<\kappa$ with $N_\beta\cap\kappa=\alpha$ and let $\map{\pi}{N_\beta}{M}$ denote the corresponding transitive collapse. Then $\pi(\kappa)=\alpha$, $\pi(X)=X\cap\alpha$ and $\pi(Y)=Y\cap\alpha$. Moreover, since our setup ensures that $\varphi(X,Y)$ holds in $N_\beta$, we know that $\varphi(X\cap\alpha,Y\cap\alpha)$ holds in $M$. Using $\Sigma_1$-upwards absoluteness, we can now conclude that $\alpha\in f(X)$. In particular, it follows that $C$ witnesses that $f(X)$ is an element of $\Club{S}$.  
 \end{proof}

 \begin{claim*}
  If $X\subseteq\kappa$ with $f(X)\in\Club{S}$, then $X\in\calA$. 
 \end{claim*}
 
 \begin{proof}[Proof of the Claim]
  Assume, towards a contradiction, that $\varphi(X,Y)$ does not hold. 
  Since $S$ is contained in the weakly compact filter on $\kappa$, there exists a $\Pi^1_1$-sentence $\Psi$ and a subset $R$ of $V_\kappa$ with the property that $\langle V_\kappa,\in,R\rangle\models\Psi$ and $$\Set{\alpha<\kappa}{\langle V_\alpha,\in,R\cap V_\alpha\rangle\models\Psi} ~ \subseteq ~ S.$$ In addition, we can find a closed unbounded subset $C$ of $\kappa$ with the property that $C\cap S\subseteq f(X)$. 
  Using {\cite[Lemma 4.2 \& Corollary 4.3]{MR3913154}}, we can now find a limit cardinal $\theta>\kappa$, a transitive set $M$, an inaccessible cardinal $\bar{\kappa}\in M\cap\kappa$ with $\HH{\bar{\kappa}^+}^M\prec_{\Sigma_1}\HH{\bar{\kappa}^+}$  and a non-trivial elementary embedding $\map{j}{M}{\HH{\theta}}$ with critical point $\bar{\kappa}$, $j(\bar{\kappa})=\kappa$ and $C,R,X,Y\in\ran{j}$. 
  We then know that $\bar{\kappa}\in C$ and $V_{\bar{\kappa}},R\cap V_{\bar{\kappa}},X\cap\bar{\kappa},Y\cap\bar{\kappa}\in M$ with $j(V_{\bar{\kappa}})=V_\kappa$, $j(R\cap V_{\bar{\kappa}})=R$, $j(X\cap\bar{\kappa})=X$ and $j(Y\cap\bar{\kappa})=Y$. 
 Our setup then allows us to use $\Sigma_1$-upwards absoluteness to show that both $\neg\varphi(X\cap\bar{\kappa},Y\cap\bar{\kappa})$ and $\langle V_{\bar{\kappa}},\in,R\cap V_{\bar{\kappa}}\rangle\models\Psi$ hold. But, this allows us to conclude that $\bar{\kappa}\in C\cap S\subseteq f(X)$, contradicting the definition of $f(C)$.  
 \end{proof}

 Since the above claims show that for every subset $\calA$ of $\POT{\kappa}$ that is definable by a $\Sigma_1$-formula with parameters in $\HH{\kappa^+}$, there exists a continuous function $\map{f}{\POT{\kappa}}{\POT{\kappa}}$ with $\calA=f^{{-}1}[\Club{S}]$, we know that $\Club{S}$  is  complete for the collection of subsets of $\POT{\kappa}$ that are definable in this way. 
\end{proof}

We are now ready to show how large cardinal properties of $\kappa$ answer Question \ref{question:Q1} at $\kappa$. The following result slightly strengthens {\cite[Proposition 2.1]{FriedmanWu}}:

\begin{corollary}
 If $\kappa$ is a weakly compact cardinal and $S$ is an element of the weakly compact filter on $\kappa$, then $\Club{S}$ is not a $\mathbf{\Delta}_1$-subset of $\POT{\kappa}$. 
\end{corollary}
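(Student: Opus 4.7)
The plan is to derive the corollary as a direct application of the descriptive machinery just set up, namely Proposition \ref{proposition:DST} together with Theorem \ref{theorem:WC} and Lemmata \ref{lemma:Preimage} and \ref{lemma:Complements}. First I would note that a weakly compact cardinal $\kappa$ is in particular inaccessible, so $\kappa=\kappa^{{<}\kappa}$ holds, which lets Lemma \ref{lemma:Preimage} apply: the collection $\Gamma$ of subsets of $\POT{\kappa}$ that are definable by $\Sigma_1$-formulas with parameters in $\HH{\kappa^+}$ is closed under continuous preimages $\map{f}{\POT{\kappa}}{\POT{\kappa}}$.

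The core of the argument is a contradiction. I would assume, towards a contradiction, that $\Club{S}$ is a $\mathbf{\Delta}_1$-subset of $\POT{\kappa}$. Since being $\mathbf{\Delta}_1$ means that both $\Club{S}$ and its complement $\POT{\kappa}\setminus\Club{S}$ are definable by $\Sigma_1$-formulas with parameters in $\HH{\kappa^+}$, we have $\POT{\kappa}\setminus\Club{S}\in\Gamma$. Meanwhile, Theorem \ref{theorem:WC} supplies that $\Club{S}$ is complete for $\Gamma$. These are exactly the two hypotheses of Proposition \ref{proposition:DST} (with $\calA=\Club{S}$ and $\calX=\POT{\kappa}$), so that proposition yields that $\Gamma$ is closed under complements in $\POT{\kappa}$.

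This conclusion, however, directly contradicts Lemma \ref{lemma:Complements}, which asserts that $\Gamma$ is \emph{not} closed under complements in $\POT{\kappa}$ (established via a universal $\Sigma_1$-formula and a diagonalization). Hence the assumption must fail, and $\Club{S}$ is not $\mathbf{\Delta}_1$.

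There is really no genuine obstacle here: all the substantive work, in particular the construction of the continuous reduction $\map{f}{\POT{\kappa}}{\POT{\kappa}}$ using weakly compact embeddings witnessing membership of $S$ in the weakly compact filter, has been absorbed into Theorem \ref{theorem:WC}. The only point worth being careful about is verifying that the three ingredients truly apply to the same collection $\Gamma$ and the same ambient space $\POT{\kappa}$, and that the inaccessibility of $\kappa$ is implicitly being used to invoke Lemma \ref{lemma:Preimage} (so that $\Gamma$ is indeed closed under continuous preimages, which is a hypothesis of Proposition \ref{proposition:DST}). Once these compatibilities are noted, the corollary follows in a few lines.
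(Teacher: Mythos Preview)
Your proposal is correct and follows essentially the same approach as the paper: assume $\Club{S}$ is $\mathbf{\Delta}_1$, use inaccessibility of $\kappa$ to invoke Lemma \ref{lemma:Preimage}, combine Theorem \ref{theorem:WC} with Proposition \ref{proposition:DST} to conclude that the $\Sigma_1$-definable collection is closed under complements, and derive a contradiction with Lemma \ref{lemma:Complements}. The paper's proof is identical in structure and in the ingredients used.
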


\begin{proof}
  Assume, towards a contradiction, that $\POT{\kappa}\setminus\Club{S}$ is definable by a $\Sigma_1$-formula with parameters in $\HH{\kappa^+}$. 
  Since $\kappa$ is inaccessible, we can then apply Lemma \ref{lemma:Preimage} to show  that the collection of subsets of $\POT{\kappa}$ that are definable by $\Sigma_1$-formulas with parameters in $\HH{\kappa^+}$ is closed under continuous preimages. 
  Moreover, since Theorem \ref{theorem:WC} ensures that $\Club{S}$ is complete for the collection of all subsets of $\POT{\kappa}$ that are definable by $\Sigma_1$-formulas with parameters in $\HH{\kappa^+}$, 
 Proposition \ref{proposition:DST}  implies that this collection is closed under complements, directly contradicting Lemma \ref{lemma:Complements}.  
\end{proof}

\begin{corollary}\label{corollary:NonDeltaWC}
  If $\kappa$ is a weakly compact cardinal, then $\Club{\kappa}$  is not a $\mathbf{\Delta}_1$-subset of $\POT{\kappa}$. \qed  
\end{corollary}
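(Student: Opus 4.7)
The plan is to deduce this corollary directly from the preceding corollary by specializing to the stationary set $S = \kappa$. The key verification needed is that $\kappa$ itself belongs to the weakly compact filter on $\kappa$, and that under this choice the set $\Club{S}$ coincides with $\Club{\kappa}$.

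For the first point, I would pick any $\Pi^1_1$-sentence $\Phi$ and any $R \subseteq V_\kappa$ such that $\langle V_\kappa, \in, R\rangle \models \Phi$ holds (for instance, a trivially true sentence such as $\forall x \, (x = x)$ paired with $R = \emptyset$). The corresponding reflection set $\Set{\alpha < \kappa}{\langle V_\alpha, \in, R \cap V_\alpha\rangle \models \Phi}$ is by definition a subset of $\kappa$, so the definition of the weakly compact filter is witnessed with the superset $\kappa$ itself, showing $\kappa$ is an element of this filter.

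For the second point, unfolding the definition of $\Club{S}$ with $S = \kappa$ gives $\kappa \setminus S = \emptyset$, hence $A \cup (\kappa \setminus S) = A$, so
\[
\Club{S} ~ = ~ \Set{A \subseteq \kappa}{A \in \Club{\kappa}} ~ = ~ \Club{\kappa}.
\]
Applying the previous corollary to this choice of $S$ then yields the desired conclusion immediately. Since the argument is a direct instantiation of the already-established result, no real obstacle arises beyond confirming the two definitional observations above.
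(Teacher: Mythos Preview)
Your proposal is correct and matches the paper's approach exactly: the corollary is marked with \qed\ because it is an immediate specialization of the preceding corollary to $S=\kappa$, and your two verifications (that $\kappa$ lies in the weakly compact filter and that $\Club{\kappa}=\Club{S}$ for $S=\kappa$) are precisely the routine checks needed.
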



\subsection{Condensation}

The aim of this section is to show that, in canonical inner models, the non-stationary ideal of an uncountable regular cardinal is not $\mathbf{\Delta}_1$-definable. 
 In the case of the constructible universe $L$, this conclusion was proven by S. Friedman, Hyttinen and Kulikov (see {\cite[Theorem 49.(3)]{MR3235820}}). In the following, we will show that the key idea from their proof can also be used to prove analogous results for various other canonical inner models. 
 These arguments will be based on an abstract condensation principle introduced in the next definition:

\begin{definition}
 Given an uncountable regular cardinal $\kappa$ and a subset $S$ of $\kappa$, we let $\Cond{S}$ denote the statement that there exists an  increasing and continuous  sequence $\seq{M_\beta}{\beta<\kappa}$ of transitive sets 
 with the property that for every $z\in\HH{\kappa^+}$, there exists an increasing and continuous sequence $\seq{N_\varepsilon}{\varepsilon<\kappa}$ of elementary submodels of $\HH{\kappa^+}$ of cardinality less than $\kappa$ such that the following statements hold:  
 \begin{enumerate}
   \item $z\in N_0$ and  $\varepsilon\subseteq N_\varepsilon\cap\kappa\in\kappa$  for all $\varepsilon<\kappa$. 
   
   \item  If $\varepsilon<\kappa$ satisfies   $N_\varepsilon\cap\kappa\in S$, then there is $\beta<\kappa$ such that the following statements hold: 
     \begin{enumerate}
       \item  The transitive collapse of $N_\varepsilon$ is equal to $M_\beta$. 
       
       \item  If $\beta<\gamma<\kappa$ has the property that $M_\gamma$ is a model of $\ZFC^-$, then the set $\Set{N_\delta\cap\kappa}{\delta<\varepsilon}$ is an element of $M_\gamma$. 
     \end{enumerate}
 \end{enumerate}
\end{definition}

In order to  motivate the formulation of the principle $\Cond{\kappa}$, we briefly show that this principle holds at all uncountable regular cardinals in G\"odel's constructible universe. The argument proving  this observation already contains the key idea from the proof of {\cite[Theorem 49.(3)]{MR3235820}}.

\begin{proposition}
 If $V=L$ and $\kappa$ is an uncountable regular cardinal, then $\seq{L_\varepsilon}{\varepsilon<\kappa}$ witnesses that $\Cond{\kappa}$ holds. 
\end{proposition}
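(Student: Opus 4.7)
The plan is to take $M_\beta = L_\beta$ and, for each $z \in \HH{\kappa^+} = L_{\kappa^+}$, to construct the chain $\seq{N_\varepsilon}{\varepsilon<\kappa}$ from the canonical $<_L$-Skolem functions of $L_{\kappa^+}$ in a sufficiently definable way so that G\"odel's condensation lemma identifies the isomorphism type of each transitive collapse, and so that the sequence of ``heights'' $N_\delta\cap\kappa$ can be reconstructed inside that collapse.

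First, I would produce the chain by fixing, for each $z \in L_{\kappa^+}$, the set $C \subseteq \kappa$ of ordinals $\alpha<\kappa$ such that $H(\alpha) := \mathrm{Hull}^{L_{\kappa^+}}(\{z\}\cup\alpha)$ satisfies $H(\alpha)\cap\kappa = \alpha$; that $C$ is a club follows from regularity of $\kappa$ together with the standard iteration of $\alpha\mapsto\sup(H(\alpha)\cap\kappa)$. Enumerating $C$ monotonically as $\seq{c_\varepsilon}{\varepsilon<\kappa}$ and setting $N_\varepsilon = H(c_\varepsilon)$ yields an increasing continuous chain of elementary submodels of $L_{\kappa^+}$, each of cardinality less than $\kappa$, with $z \in N_0$ and $\varepsilon \subseteq c_\varepsilon = N_\varepsilon\cap\kappa \in\kappa$. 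Applying condensation to the transitive collapse $\pi_\varepsilon\colon N_\varepsilon \to \bar N_\varepsilon$ then identifies $\bar N_\varepsilon = L_{\beta_\varepsilon}$ for some $\beta_\varepsilon < \kappa^+$, and since $|L_{\beta_\varepsilon}| = |N_\varepsilon| < \kappa$ we have $\beta_\varepsilon < \kappa$. This verifies clause (2)(a).

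For clause (2)(b), the key observation is that $\bar\kappa := c_\varepsilon$ is the critical point of $\pi_\varepsilon$, so $\pi_\varepsilon\restriction c_\varepsilon = \mathrm{id}$, and in particular $\pi_\varepsilon(c_\delta) = c_\delta$ for all $\delta<\varepsilon$. The definition of $C$ is uniform $\Sigma_1$ over $L_{\kappa^+}$ from $z$ and refers only to the canonical $L$-Skolem functions, hence it transfers under the isomorphism $\pi_\varepsilon$: computed internally to $L_{\beta_\varepsilon}$ from $\bar z := \pi_\varepsilon(z)$, the analogous club is precisely $C \cap \bar\kappa$ with its enumeration. Consequently the sequence $\seq{c_\delta}{\delta<\varepsilon} = \seq{N_\delta\cap\kappa}{\delta<\varepsilon}$ is definable over $L_{\beta_\varepsilon}$ from $\bar z$ and $\varepsilon$, so it lies in $L_{\beta_\varepsilon+\omega}$ and a fortiori in every $L_\gamma$ with $\beta_\varepsilon < \gamma < \kappa$ and $L_\gamma \models \ZFC^-$.

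The main technical obstacle I foresee is making the transfer step precise: one must verify that the formula defining $C$ is genuinely absolute between $L_{\kappa^+}$ and $L_{\beta_\varepsilon}$ via $\pi_\varepsilon$, so that the externally built enumeration of $C\cap\bar\kappa$ coincides with the one constructed internally from $\bar z$. This is the standard fine-structural price of condensation arguments and rests on the uniform $\Sigma_1$-definability of the $L$-Skolem hierarchy, which is exactly the ingredient that makes the whole approach work.
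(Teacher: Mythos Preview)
Your proposal is correct and follows essentially the same route as the paper: define $C$ as the club of $\alpha<\kappa$ with $\mathcal{Hull}_{L_{\kappa^+}}(\alpha\cup\{z\})\cap\kappa=\alpha$, let $N_\varepsilon$ be the hull at the $\varepsilon$-th element of $C$, invoke condensation for clause (2)(a), and for (2)(b) observe that $C\cap c_\varepsilon$ is exactly the set of $\alpha<c_\varepsilon$ with $\mathcal{Hull}_{L_{\beta_\varepsilon}}(\alpha\cup\{\bar z\})\cap c_\varepsilon=\alpha$, hence computable inside any $L_\gamma\models\ZFC^-$ containing $L_{\beta_\varepsilon}$. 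The paper phrases the last step by computing the hull inside $L_\gamma$ rather than asserting definability over $L_{\beta_\varepsilon}$, but this is the same argument.
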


\begin{proof}
 Fix $z\in L_{\kappa^+}$ and let $C$ denote the set of all $\alpha<\kappa$ with the property that the Skolem hull $\mathcal{Hull}_{L_{\kappa^+}}(\alpha\cup\{z\})$ (using the canonical Skolem functions in $L$) of $\alpha\cup\{z\}$ in $L_{\kappa^+}$ satisfies $\mathcal{Hull}_{L_{\kappa^+}}(\alpha\cup\{z\})\cap\kappa=\alpha$. Then $C$ is a closed unbounded subset of $\kappa$. Let $\seq{\alpha_\varepsilon}{\varepsilon<\kappa}$ denote the monotone enumeration of $C$ and, for each $\varepsilon<\kappa$, we set $N_\varepsilon=\mathcal{Hull}_{L_{\kappa^+}}(\alpha_\varepsilon\cup\{z\})$.  
  Then $z\in N_0$ and for all $\varepsilon<\kappa$, we know that $N_\varepsilon$ is an elementary submodel of $\HH{\kappa^+}$ of cardinality less than $\kappa$ with $\varepsilon\subseteq\alpha_\varepsilon=N_\varepsilon\cap\kappa\in\kappa$.   Moreover, for each $\varepsilon<\kappa$, there is $\beta_\varepsilon<\kappa$ such that $N_\varepsilon$ collapses to $L_{\beta_\varepsilon}$. 
   Finally, if $\varepsilon<\kappa$, $\beta_\varepsilon<\gamma<\kappa$ with the property that $L_\gamma$ is a model of $\ZFC^-$ and $\map{\pi}{N_\varepsilon}{L_{\beta_\varepsilon}}$ denotes the corresponding collapsing map, then we have $$C\cap\alpha_\varepsilon ~ = ~ \Set{\alpha<\alpha_\varepsilon}{\mathcal{Hull}_{L_{\beta_\varepsilon}}(\alpha\cup\{\pi(z)\})\cap\alpha_\varepsilon ~ = ~ \alpha} ~ \in ~ L_{\gamma}$$ and hence we know that $\Set{N_\delta\cap\kappa}{\delta<\varepsilon}\in L_\gamma$. 
\end{proof}

Next, we observe that the above principle implies fragments of the $\GCH$:

\begin{lemma}\label{lemma:CondGCH}
 If $\kappa$ is an uncountable regular cardinal such that $\Cond{S}$ holds for some stationary subset $S$ of $\kappa$, then $\kappa=\kappa^{{<}\kappa}$ holds. 
\end{lemma}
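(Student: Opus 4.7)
The plan is to argue by contraposition: assuming $\kappa^{{<}\kappa}>\kappa$, fix $\lambda<\kappa$ with $2^\lambda>\kappa$ and a witness $\seq{M_\beta}{\beta<\kappa}$ for $\Cond{S}$, and derive a contradiction by showing $|\POT{\lambda}|\leq\kappa$. The strategy is to apply the condensation principle separately to each $x\in\POT{\lambda}$ as parameter, pull $x$ through a collapse whose critical point exceeds $\lambda$, and then bound the union of the resulting small transitive models.

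The key step is the following claim: for every $x\subseteq\lambda$ there is $\beta_x<\kappa$ with $x\in M_{\beta_x}$ and $|M_{\beta_x}|<\kappa$. To verify this, apply $\Cond{S}$ with $z=x$ to obtain a chain $\seq{N_\varepsilon}{\varepsilon<\kappa}$. Continuity of the chain together with the inclusions $\varepsilon\subseteq N_\varepsilon\cap\kappa\in\kappa$ show that $\Set{N_\varepsilon\cap\kappa}{\varepsilon<\kappa}$ is a club in $\kappa$, so stationarity of $S$ guarantees that $\Set{\varepsilon<\kappa}{N_\varepsilon\cap\kappa\in S}$ is unbounded; I would then pick some such $\varepsilon$ strictly larger than $\lambda$. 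Clause (2)(a) of $\Cond{S}$ yields $\beta_x<\kappa$ such that the Mostowski collapse $\pi_\varepsilon$ of $N_\varepsilon$ equals $M_{\beta_x}$, so $|M_{\beta_x}|=|N_\varepsilon|<\kappa$. Since $x\in N_0\subseteq N_\varepsilon$ and the critical point of $\pi_\varepsilon$ is $N_\varepsilon\cap\kappa\geq\varepsilon>\lambda$, every element of $x$ is fixed by $\pi_\varepsilon$; hence $\pi_\varepsilon(x)=x\in M_{\beta_x}$ as required.

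Setting $B=\Set{\beta<\kappa}{|M_\beta|<\kappa}$, the claim gives $\POT{\lambda}\subseteq\bigcup_{\beta\in B}M_\beta$, a union of at most $\kappa$ sets each of cardinality less than $\kappa$, whence $|\POT{\lambda}|\leq\kappa\cdot\kappa=\kappa$, contradicting $2^\lambda>\kappa$. The only step demanding care is the verification that the collapse fixes $x$, which depends on using the built-in inclusion $\varepsilon\subseteq N_\varepsilon\cap\kappa$ and choosing $\varepsilon>\lambda$; note that clause (2)(b) of the condensation principle plays no role in this argument and is presumably reserved for more refined applications elsewhere in the paper.
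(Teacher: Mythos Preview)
Your argument is correct and follows essentially the same idea as the paper's proof: apply $\Cond{S}$ to a given parameter, pick $\varepsilon$ large enough that the Mostowski collapse of $N_\varepsilon$ fixes the parameter, and conclude that the parameter lies in some $M_\beta$ of size $<\kappa$.

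The packaging differs slightly. The paper argues directly rather than by contraposition: it treats an arbitrary $z\in\HH{\kappa}$, first observes that the ordinals $\beta_\varepsilon$ are cofinal in $\kappa$ (so that \emph{every} $M_\beta$ has size $<\kappa$), and then concludes $\HH{\kappa}=\bigcup_{\beta<\kappa}M_\beta$ has cardinality $\kappa$. Your version sidesteps the cofinality step by restricting attention to $\POT{\lambda}$ for a single $\lambda<\kappa$ and tracking only those $M_\beta$ that are provably small; this is a minor simplification. Conversely, the paper's formulation yields the marginally stronger intermediate conclusion $\vert\HH{\kappa}\vert=\kappa$ in one stroke. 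Your remark that clause (2)(b) is unused here is accurate; it is needed only in the completeness argument of Theorem~\ref{theorem:Condense}.
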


\begin{proof}
 Let $\seq{M_\beta}{\beta<\kappa}$ witness that $\Cond{S}$ holds and pick $z\in\HH{\kappa}$. 
  Then there is  an increasing and continuous sequence $\seq{N_\varepsilon}{\varepsilon<\kappa}$ of elementary submodels of $\HH{\kappa^+}$ of cardinality less than $\kappa$ such that $z\in N_0$ and 
  for all $\varepsilon<\kappa$ with $N_\varepsilon\cap\kappa\in S$,   we have $\varepsilon\subseteq N_\varepsilon\cap\kappa\in\kappa$ and there is $\beta_\varepsilon<\kappa$ such that $N_\varepsilon$ collapses to $M_{\beta_\varepsilon}$. 
  We now know that  $\Set{\beta_\varepsilon}{\textit{$\varepsilon<\kappa$ with $N_\varepsilon\cap\kappa\in S$}}$ is a cofinal subset of $\kappa$ and this implies that $M_\beta$ has  cardinality less than $\kappa$ for all $\beta<\kappa$. 
  Moreover, there is $\varepsilon<\kappa$ such that $N_\varepsilon\cap\kappa\in S$ and there is a surjection of $\varepsilon$ onto the transitive closure of $z$. It then follows that $z\in M_{\beta_\varepsilon}$. 
 
 The above computations show that $\HH{\kappa}$ is equal to the union of $\kappa$-many sets of cardinality less than $\kappa$ and therefore we know that this set has cardinality $\kappa$. This proves that $\kappa=\kappa^{{<}\kappa}$ holds. 
\end{proof}

We now show how the above condensation principle is connected to the complexity of non-stationary ideals.

\begin{theorem}\label{theorem:Condense}
  If $\kappa$ is an uncountable regular cardinal  and $S$ is a stationary subset of $\kappa$  with the property that $\Cond{S}$ holds, then $\Club{S}$ is complete for the collection of subsets of $\POT{\kappa}$ that are definable by $\Sigma_1$-formulas with parameters in $\HH{\kappa^+}$. 
\end{theorem}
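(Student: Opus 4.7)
The plan is to mirror the structure of the proof of Theorem~\ref{theorem:WC}, with $\Cond{S}$ playing the role that $\Pi^1_1$-indescribability played there. Given a subset $\calA$ of $\POT{\kappa}$ that is definable by a $\Sigma_1$-formula $\varphi(v_0,v_1)$ with parameter $Y\in\HH{\kappa^+}$, so that $\calA=\Set{X\subseteq\kappa}{\varphi(X,Y)}$, I would define the candidate continuous reduction
\[
  \Map{f}{\POT{\kappa}}{\POT{\kappa}}{X}{\Set{\alpha\in S}{\varphi(X\cap\alpha,Y\cap\alpha)}}.
\]
Since $f(X)\cap\alpha$ is determined by $X\cap\alpha$, the map $f$ is Lipschitz and in particular continuous.

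The direction $X\in\calA\Rightarrow f(X)\in\Club{S}$ proceeds exactly as in Theorem~\ref{theorem:WC}: apply $\Cond{S}$ with $z=(X,Y)$ to obtain a sequence $\seq{N_\varepsilon}{\varepsilon<\kappa}$ whose trace $D=\Set{N_\varepsilon\cap\kappa}{\varepsilon<\kappa}$ is a club in $\kappa$, and for each $\varepsilon$ with $\alpha:=N_\varepsilon\cap\kappa\in S$, note that the transitive collapse $\map{\pi_\varepsilon}{N_\varepsilon}{M_{\beta_\varepsilon}}$ satisfies $\pi_\varepsilon(X)=X\cap\alpha$ and $\pi_\varepsilon(Y)=Y\cap\alpha$. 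Elementarity of $N_\varepsilon$ combined with $\Sigma_1$-upwards absoluteness from the transitive set $M_{\beta_\varepsilon}$ then gives $\alpha\in f(X)$, so $D\cap S\subseteq f(X)$ witnesses $f(X)\in\Club{S}$.

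The substance of the proof lies in the reverse implication. Assume $f(X)\in\Club{S}$, fix a closed unbounded $C\subseteq\kappa$ with $C\cap S\subseteq f(X)$, and argue by contradiction from $\neg\varphi(X,Y)$. By L\"owenheim--Skolem and Mostowski collapse, every $\alpha\in f(X)$ admits a $\Sigma_1$-witness $W_\alpha$ for $\varphi(X\cap\alpha,Y\cap\alpha)$ of hereditary cardinality at most $\alpha$, hence an element of $\HH{\kappa}$; and since $\HH{\kappa}=\bigcup_{\beta<\kappa}M_\beta$ (as follows from the proof of Lemma~\ref{lemma:CondGCH}), there is a least $\beta_*(\alpha)<\kappa$ with $W_\alpha\in M_{\beta_*(\alpha)}$. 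Applying $\Cond{S}$ with the enriched parameter $z=(X,Y,C,\seq{W_\alpha}{\alpha\in f(X)})\in\HH{\kappa^+}$, I obtain the sequence $\seq{N_\varepsilon}{\varepsilon<\kappa}$ with associated collapse-indices $\beta_\varepsilon$. For every $\varepsilon$ with $\alpha:=N_\varepsilon\cap\kappa\in C\cap S$, elementarity of $N_\varepsilon$ gives $M_{\beta_\varepsilon}\models\neg\varphi(X\cap\alpha,Y\cap\alpha)$; if in addition $\beta_*(\alpha)\leq\beta_\varepsilon$, then $W_\alpha\in M_{\beta_*(\alpha)}\subseteq M_{\beta_\varepsilon}$ and $\Sigma_0$-absoluteness of the matrix of $\varphi$ forces $M_{\beta_\varepsilon}\models\varphi(X\cap\alpha,Y\cap\alpha)$, yielding the desired contradiction.

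The main obstacle is therefore to ensure $\beta_*(\alpha)\leq\beta_\varepsilon$ for at least one such $\varepsilon$. I expect to invoke clause~2(b) of $\Cond{S}$: for every $\gamma>\beta_\varepsilon$ with $M_\gamma\models\ZFC^-$, the trace $\Set{N_\delta\cap\kappa}{\delta<\varepsilon}$ is an element of $M_\gamma$. Working inside such a $\ZFC^-$-closed $M_\gamma$ (which automatically contains $M_{\beta_\varepsilon}$ together with the relevant parameters $X\cap\alpha$, $Y\cap\alpha$, and the collapsed image of the enriched witness function) and exploiting the continuity of both $\seq{M_\beta}{\beta<\kappa}$ and $\seq{N_\varepsilon}{\varepsilon<\kappa}$, a Fodor-style pressing-down argument against the function $\alpha\mapsto\beta_*(\alpha)$ on the stationary set of appropriate $\alpha$'s should deliver the desired inequality. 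The technical heart of the argument will thus be the careful coordination of these two increasing continuous $\kappa$-sequences, with clause~2(b) providing the required coherence between the condensation trace and the $M_\beta$-hierarchy.
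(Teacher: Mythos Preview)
Your forward direction is fine and matches the paper. The reverse direction, however, has a genuine gap: the pressing-down sketch does not close. The map $\alpha\mapsto\beta_*(\alpha)$ need not be regressive (a witness $W_\alpha$ of hereditary size $\leq\alpha$ can first appear in $M_\beta$ for $\beta$ arbitrarily large below $\kappa$), and even after invoking clause~2(b) to get $\Set{N_\delta\cap\kappa}{\delta<\varepsilon}\in M_\gamma$ for some $\gamma>\beta_\varepsilon$, nothing in your setup forces a contradiction inside $M_\gamma$: that model may perfectly well satisfy $\varphi(X\cap\alpha,Y\cap\alpha)$, and you have no leverage to pull the witness $W_\alpha$ down into $M_{\beta_\varepsilon}$. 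Including $\seq{W_\alpha}{\alpha\in f(X)}$ in $z$ does not help either, since $\alpha=N_\varepsilon\cap\kappa\notin N_\varepsilon$, so $N_\varepsilon$ never sees $W_\alpha$ itself.

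The paper's fix is not to analyse witnesses $W_\alpha$ at all, but to \emph{enrich the reduction} $f$. One defines $\alpha\in f(X)$ to mean that some $M_\beta\models\ZFC^-$ contains $\alpha,S\cap\alpha,X\cap\alpha,Y\cap\alpha$, satisfies $\varphi(X\cap\alpha,Y\cap\alpha)$, \emph{and} believes that $\alpha$ is regular uncountable and $S\cap\alpha$ is stationary in $\alpha$. The forward direction is unchanged (the collapse $M_{\beta_\varepsilon}$ of $N_\varepsilon$ automatically has all these properties). For the reverse direction one applies $\Cond{S}$ with $z=(C_0,S,X,Y)$, lets $C_1$ be the trace, and takes $\alpha$ to be the \emph{least} element of $\Lim(C_1)\cap S$. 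Then $M_{\beta_\varepsilon}\models\neg\varphi(X\cap\alpha,Y\cap\alpha)$ as you observed, while the $\gamma$ witnessing $\alpha\in f(X)$ must satisfy $\gamma>\beta_\varepsilon$ by $\Sigma_1$-upwards absoluteness. Now clause~2(b) puts $C_1\cap\alpha$ into $M_\gamma$; but by minimality of $\alpha$, the closed unbounded set $C_1\cap\alpha$ has no limit points in $S\cap\alpha$, so inside $M_\gamma$ it witnesses that $S\cap\alpha$ is non-stationary (or that $\alpha$ is singular), contradicting the extra clauses in the definition of $f(X)$. That is the mechanism by which clause~2(b) is actually discharged.
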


\begin{proof}
  Fix a  subset $\calA$ of $\POT{\kappa}$ with the property that $\calA=\Set{X\subseteq\kappa}{\varphi(X,Y)}$ holds  for some $\Sigma_1$-formula $\varphi(v_0,v_1)$ and some $Y\subseteq\kappa$. 
  Let  $\seq{M_\varepsilon}{\varepsilon<\kappa}$ be a sequence of transitive sets witnessing that $\Cond{S}$ holds. 
  Given $X\subseteq\kappa$, we define $f(X)$ to be the set of all limit ordinals $\alpha\in S$ with  the property that there exists $\beta<\kappa$ such that the following statements hold: 
  \begin{itemize}
   \item $M_\beta$ is a model of $\ZFC^-$. 
  
   \item $\alpha,S\cap\alpha,X\cap\alpha,Y\cap\alpha\in M_\beta$. 
   
   \item $\varphi(X\cap\alpha,Y\cap\alpha)$ holds in $M_\beta$. 
   
   \item $\alpha$ is an uncountable  regular cardinal in $M_\beta$. 
  
   \item $S\cap\alpha$ is a stationary subset of $\alpha$ in $M_\beta$. 
  \end{itemize}
 It is then easy to see that the resulting function $\map{f}{\POT{\kappa}}{\POT{\kappa}}$ is Lipschitz and therefore continuous.

 \begin{claim*}
  If $X\in\calA$, then $f(X)\in\Club{S}$. 
 \end{claim*}
 
 \begin{proof}[Proof of the Claim]
  By our assumptions, there exists an increasing and continuous sequence $\seq{N_\varepsilon}{\varepsilon<\kappa}$ of elementary submodels of $\HH{\kappa^+}$ of cardinality less than $\kappa$ such that $\Lim\cap S,X,Y\in N_0$ and  for every $\varepsilon<\kappa$, we have $\varepsilon\subseteq N_\varepsilon\cap\kappa\in\kappa$ and, if  $N_\varepsilon\cap\kappa\in S$, then there is  $\beta_\varepsilon<\kappa$ with the property that the transitive collapse of $N_\varepsilon$ is equal to $M_{\beta_\varepsilon}$.  
  Set $C=\Set{N_\varepsilon\cap\kappa}{\varepsilon<\kappa}$.  Then $C$ is a closed unbounded subset of $\kappa$ that consists of limit ordinals.  
  Fix  $\alpha\in C\cap S$ and pick $\varepsilon<\kappa$ with $N_\varepsilon\cap\kappa=\alpha$. Then the fact that $M_{\beta_\varepsilon}$ is  the transitive collapse of $N_\varepsilon$ implies that $M_{\beta_\varepsilon}$ is a model $\ZFC^-$, $\alpha,S\cap\alpha,X\cap\alpha,Y\cap\alpha\in M_{\beta_\varepsilon}$, $\varphi(X\cap\alpha,Y\cap\alpha)$ holds in $M_{\beta_\varepsilon}$, $\alpha$ is a regular uncountable cardinal in $M_{\beta_\varepsilon}$ and $\Lim\cap S\cap\alpha$ is a stationary subset of $\alpha$ in $M_{\beta_\varepsilon}$. This shows that $\beta_\varepsilon$ witnesses that $\alpha$ is an element of $f(X)$.  
 \end{proof}

  \begin{claim*}
  If $X\subseteq\kappa$ with $f(X)\in\Club{S}$, then $X\in\calA$.  
 \end{claim*}
 
 \begin{proof}[Proof of the Claim]
  Assume, towards a contradiction, that $\varphi(X,Y)$ does not hold. 
  Pick a closed unbounded subset $C_0$ of $\kappa$ with $C_0\cap S\subseteq f(X)$. 
   Our assumptions now allow us to find an increasing and continuous sequence $\seq{N_\varepsilon}{\varepsilon<\kappa}$ of elementary submodels of $\HH{\kappa^+}$ of cardinality less than $\kappa$ such that $C_0,S,X,Y\in N_0$ and  for every $\varepsilon<\kappa$, we have $\varepsilon\subseteq N_\varepsilon\cap\kappa\in\kappa$ and, if $N_\varepsilon\cap\kappa\in S$, then there is $\beta_\varepsilon<\kappa$ such that the transitive collapse of $N_\varepsilon$ is equal to $M_{\beta_\varepsilon}$ and, if $\beta_\varepsilon<\gamma<\kappa$ has the property that $M_\gamma$ is a model of $\ZFC^-$, then $\Set{N_\delta\cap\kappa}{\delta<\varepsilon}\in M_\gamma$. 
   It then follows that   $C_1=\Set{N_\varepsilon\cap\kappa}{\varepsilon<\kappa}$ is a closed unbounded subset of $\kappa$ and  $S$ contains a limit point of $C_1$. 
   Set $\alpha=\min(\Lim(C_1)\cap S)$ and  
   pick $\varepsilon<\kappa$ with $N_\varepsilon\cap\kappa=\alpha$.  
   We then know that the transitive collapse of $N_\varepsilon$ is equal to $M_{\beta\varepsilon}$ and this implies that $X\cap\alpha,Y\cap\alpha\in M_{\beta_\varepsilon}$ and $\varphi(X\cap\alpha,Y\cap\alpha)$ fails in $M_{\beta_\varepsilon}$. 
   Moreover, this setup ensures that $\alpha$ is a limit point of $C_0$ and we can conclude that $\alpha\in C_0\cap S\subseteq f(X)$ holds. 
   Therefore, we can find $\gamma<\kappa$ such that $M_\gamma$ is a model of $\ZFC^-$, $\alpha,S\cap\alpha,X\cap\alpha,Y\cap\alpha\in M_\gamma$, $\varphi(X\cap\alpha,Y\cap\alpha)$ holds in $M_\gamma$, $\alpha$ is an uncountable  regular cardinal in $M_\gamma$ and $S\cap\alpha$ is a stationary subset of $\alpha$ in $M_\gamma$. 
   Since $\varphi(X\cap\alpha,Y\cap\alpha)$ holds in $M_\gamma$ and fails in $M_{\beta_\varepsilon}$, $\Sigma_1$-upwards absoluteness implies that $M_\gamma\nsubseteq M_{\beta_\varepsilon}$ and hence we know that $\beta_\varepsilon<\gamma<\kappa$. 
 But, this implies that $C_1\cap\alpha=\Set{N_\delta\cap\kappa}{\delta<\varepsilon}\in M_\gamma$ and this set witnesses that either $\alpha$ is not an uncountable regular cardinal in $M_\gamma$ or $S\cap\alpha$ is not a stationary subset of $\alpha$ in $M_\gamma$, a contradiction.  
\end{proof}

 The above claims complete the proof of the theorem. 
\end{proof}

We are now ready to prove the following slight strengthening of {\cite[Theorem 49.(3)]{MR3235820}}:

\begin{corollary}
  If $\kappa$ is an uncountable regular cardinal  and $S$ is a stationary subset of $\kappa$ with the property that $\Cond{S}$ holds,  then $\Club{S}$ is not a $\mathbf{\Delta}_1$-subset of $\POT{\kappa}$.  
\end{corollary}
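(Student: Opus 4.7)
The plan is to mimic exactly the proof of the corresponding corollary for weakly compact cardinals, replacing the use of Theorem \ref{theorem:WC} with Theorem \ref{theorem:Condense}. The corollary is essentially a formal consequence of completeness plus the descriptive-set-theoretic machinery of Proposition \ref{proposition:DST} and Lemma \ref{lemma:Complements}.

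First, I would check that the cardinal arithmetic needed for Lemma \ref{lemma:Preimage} is available. The hypothesis $\Cond{S}$ together with Lemma \ref{lemma:CondGCH} yields $\kappa = \kappa^{{<}\kappa}$, so the collection $\Gamma$ of subsets of $\POT{\kappa}$ that are definable by $\Sigma_1$-formulas with parameters in $\HH{\kappa^+}$ is closed under preimages by continuous functions $\POT{\kappa} \longrightarrow \POT{\kappa}$.

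Next, I would argue by contradiction. Suppose that $\Club{S}$ is a $\mathbf{\Delta}_1$-subset of $\POT{\kappa}$; then its complement $\POT{\kappa} \setminus \Club{S}$ lies in $\Gamma$. By Theorem \ref{theorem:Condense}, $\Club{S}$ is complete for $\Gamma$. Since $\Gamma$ is closed under continuous preimages and contains the complement of the complete set $\Club{S}$, Proposition \ref{proposition:DST} gives that $\Gamma$ is closed under complements in $\POT{\kappa}$. But Lemma \ref{lemma:Complements} asserts precisely the opposite, yielding a contradiction.

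There is no real obstacle here; all the work has been done in Theorem \ref{theorem:Condense}, and the remainder is the same packaging that concludes the weakly compact case. The only step requiring a moment's thought is noting that $\Cond{S}$ supplies the cardinal arithmetic needed to apply Lemma \ref{lemma:Preimage}, but this is immediate from Lemma \ref{lemma:CondGCH}.
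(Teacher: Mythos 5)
Your proposal is correct and follows essentially the same route as the paper: invoke Lemma \ref{lemma:CondGCH} to get $\kappa=\kappa^{{<}\kappa}$ so that Lemma \ref{lemma:Preimage} applies, use Theorem \ref{theorem:Condense} for completeness of $\Club{S}$, and then derive a contradiction from Proposition \ref{proposition:DST} together with Lemma \ref{lemma:Complements}. No gaps; this matches the paper's argument step for step.
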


\begin{proof}
  A combination of Lemma \ref{lemma:Preimage}, Lemma \ref{lemma:Complements} and Lemma \ref{lemma:CondGCH} shows that the  collection of subsets of $\POT{\kappa}$ that are definable by $\Sigma_1$-formulas with parameters in $\HH{\kappa^+}$  is closed under continuous preimages and not closed under complements. 
 Since Theorem \ref{theorem:Condense} ensures that $\Club{S}$ is complete for this collection, we can now apply  Proposition \ref{proposition:DST} to conclude that $\POT{\kappa}\setminus\Club{S}$ is not a $\mathbf{\Delta}_1$-subset of $\POT{\kappa}$.  
\end{proof}

In the remainder of this section, we will use results on abstract condensation principles contained in  \cite{fernandes2021local}, \cite{MR2860182},  \cite{MR3206451} and \cite{MR3436373} to show that, in various canonical inner models, the principle $\Cond{\kappa}$ holds for every uncountable regular cardinal $\kappa$.

\begin{definition}[\cite{MR2860182,MR3436373}]
 Given an uncountable cardinal $\kappa$, we say that \emph{Local Club Condensation holds at $\kappa^+$} if there exists a sequence $\seq{M_\gamma}{\gamma\leq\kappa^+}$ of transitive sets such that the following statements hold: 
  \begin{enumerate}
    \item If $\beta<\gamma\leq\kappa^+$, then $M_\beta\in M_\gamma$ and $M_\beta\cap\Ord=\beta$. 
    
    \item If $\gamma\leq\kappa^+$ is a limit ordinal, then $M_\gamma=\bigcup\Set{M_\beta}{\beta<\gamma}$. 
    
    \item $M_{\kappa^+}=\HH{\kappa^+}$. 
    
    \item If $\kappa\leq\gamma<\kappa^+$ and $\mathbb{M}$ is a structure in a countable language that expands the structure $\langle M_\gamma,\in,\seq{M_\beta}{\beta<\gamma}\rangle$, then there is an increasing and continuous sequence $\seq{\mathbb{N}_\alpha}{\alpha<\kappa}$ of elementary submodels of $\mathbb{M}$ of cardinality less than $\kappa$     satisfying the following statements, where $N_\alpha$ denotes the underlying set of $\mathbb{N}_\alpha$ for each $\alpha<\kappa$: 
      \begin{enumerate}
       \item $\alpha\subseteq N_\alpha$ for all $\alpha<\kappa$. 
       
       \item $M_\gamma=\bigcup\Set{N_\alpha}{\alpha<\kappa}$. 
       
       \item If $\alpha<\kappa$, then there is $\beta<\kappa$ with the property that the transitive collapse of $N_\alpha$ is equal to $M_\beta$. 
      \end{enumerate}
  \end{enumerate}
\end{definition}

It is known that, in various canonical inner models of the form $L[E]$, the sequences $\seq{L_\gamma[E]}{\gamma\leq\kappa^+}$ witness that Local Club Condensation holds at uncountable cardinals $\kappa^+$  (see \cite{fernandes2021local} and {\cite[Theorem 8]{MR2860182}}). Therefore, the following result shows that the non-stationary ideal of an uncountable regular cardinal is not $\mathbf{\Delta}_1$-definable in these models.

\begin{lemma}
 If $\kappa$ is an uncountable regular cardinal and $A$ is a subset of $\kappa^+$ with the property that the sequence $\seq{L_\gamma[A]}{\gamma\leq\kappa^+}$ witnesses that Local Club Condensation holds at $\kappa^+$, then $\seq{L_\beta[A]}{\beta<\kappa}$ witnesses that $\Cond{\kappa}$ holds. 
\end{lemma}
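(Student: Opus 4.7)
The plan is to mimic the proof already given in this paper for $V=L$, using LCC in place of the standard condensation of $L$. Set $M_\beta := L_\beta[A]$ for $\beta<\kappa$; this is trivially $\subseteq$-increasing and $\subseteq$-continuous, so it only remains, for each $z\in\HH{\kappa^+}$, to produce a suitable elementary chain $\seq{N_\varepsilon}{\varepsilon<\kappa}$. Since $L_{\kappa^+}[A]=\HH{\kappa^+}$ by the third clause of LCC, one first selects $\gamma^*\in[\kappa,\kappa^+)$ with $\{z,\kappa\}\subseteq L_{\gamma^*}[A]$ and $L_{\gamma^*}[A]\prec L_{\kappa^+}[A]$; such $\gamma^*$ exist via the usual iterated L\"owenheim--Skolem construction inside $L_{\kappa^+}[A]$, using that every element of an elementary submodel of $L_{\kappa^+}[A]$ has its $L[A]$-rank in that submodel, so that after $\omega$ closure steps one obtains an elementary submodel equal to $L_{\gamma^*}[A]$ for $\gamma^*=X\cap\kappa^+$.

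Next, apply LCC at $\gamma^*$ to the expansion $\mathbb{M}$ of $\langle L_{\gamma^*}[A],\in,\seq{L_\beta[A]}{\beta<\gamma^*}\rangle$ obtained by adjoining canonical Skolem functions (coming from the $<_{L[A]}$-well-ordering) and constants for $z$ and $\kappa$. LCC delivers an increasing continuous sequence $\seq{\mathbb{N}_\alpha}{\alpha<\kappa}$ of elementary submodels of $\mathbb{M}$ with $\alpha\subseteq\mathbb{N}_\alpha$, $\bigcup_{\alpha<\kappa}\mathbb{N}_\alpha=L_{\gamma^*}[A]$, and transitive collapses $L_{\beta_\alpha}[A]$ for some $\beta_\alpha<\kappa$; arrange that $\mathbb{N}_\alpha=\mathrm{Hull}^{\mathbb{M}}(\alpha\cup\{z,\kappa\})$, the canonical Skolem hull. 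The map $\alpha\mapsto\sup(\mathbb{N}_\alpha\cap\kappa)$ is continuous and cofinal in $\kappa$, so its set $C$ of fixed points is a club of $\kappa$; on $C$, the containment $\alpha\subseteq\mathbb{N}_\alpha$ forces $\mathbb{N}_\alpha\cap\kappa=\alpha$. Let $\phi:\kappa\to C$ be the increasing enumeration and define $N_\varepsilon:=\mathbb{N}_{\phi(\varepsilon)}$. Then $\seq{N_\varepsilon}{\varepsilon<\kappa}$ is an increasing continuous chain of elementary submodels of $L_{\gamma^*}[A]$, and hence of $\HH{\kappa^+}$, with $z\in N_0$, $\varepsilon\subseteq N_\varepsilon\cap\kappa=\phi(\varepsilon)\in\kappa$, and transitive collapse $L_{\beta_{\phi(\varepsilon)}}[A]=M_{\beta_{\phi(\varepsilon)}}$.

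The hardest part is verifying the final clause of $\Cond{\kappa}$: for every $\gamma\in(\beta_{\phi(\varepsilon)},\kappa)$ with $L_\gamma[A]\models\ZFC^-$, the set $\{N_\delta\cap\kappa:\delta<\varepsilon\}=\phi[\varepsilon]$ must lie in $L_\gamma[A]$. Letting $\pi_\varepsilon:N_\varepsilon\to L_{\beta_{\phi(\varepsilon)}}[A]$ be the collapse and $\bar{z}:=\pi_\varepsilon(z)$, condensation and the canonical choice of Skolem functions yield, in direct analogy with the $V=L$ computation given earlier in the paper,
$$\phi[\varepsilon] \;=\; C\cap\phi(\varepsilon) \;=\; \{\alpha<\phi(\varepsilon):\mathrm{Hull}^{L_{\beta_{\phi(\varepsilon)}}[A]}(\alpha\cup\{\bar{z},\phi(\varepsilon)\})\cap\phi(\varepsilon)=\alpha\}.$$
The right-hand side is definable from parameters in $L_{\beta_{\phi(\varepsilon)}}[A]$ together with $\phi(\varepsilon)$, and hence lies in any $L_\gamma[A]\models\ZFC^-$ with $\gamma>\beta_{\phi(\varepsilon)}$. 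The main obstacle is precisely this identity: LCC only asserts the existence of some elementary chain with the correct collapses, so care is needed to pin down the canonical Skolem hulls and to transfer the hull construction through $\pi_\varepsilon$; building the canonical Skolem functions into $\mathbb{M}$ and invoking absoluteness of the hull construction under the collapse is what makes the identity go through.
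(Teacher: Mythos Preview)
Your overall plan matches the paper's, but there is a genuine gap at the step ``arrange that $\mathbb{N}_\alpha=\mathrm{Hull}^{\mathbb{M}}(\alpha\cup\{z,\kappa\})$''. Local Club Condensation only asserts the existence of \emph{some} continuous filtration of $L_{\gamma^*}[A]$ by elementary submodels whose collapses are of the form $L_\beta[A]$; it does not say that the canonical Skolem hulls form such a filtration. Adding Skolem functions to the language of $\mathbb{M}$ only guarantees $\mathrm{Hull}^{\mathbb{M}}(\alpha\cup\{z,\kappa\})\subseteq\mathbb{N}_\alpha$ for the LCC-given $\mathbb{N}_\alpha$, not equality. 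So you are caught between two bad options: if you use the LCC-given $\mathbb{N}_\alpha$, you get the correct collapses but cannot recover $\{\mathbb{N}_\delta\cap\kappa:\delta<\varepsilon\}$ inside $L_\gamma[A]$ (since that chain is not canonically definable); if you use the Skolem hulls, the final-clause computation goes through but you have not shown that these hulls collapse to levels $L_\beta[A]$. Your closing paragraph identifies exactly this tension but does not resolve it---``building the canonical Skolem functions into $\mathbb{M}$'' does not by itself force the LCC filtration to coincide with the hull filtration.

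The paper closes this gap by invoking an external strengthening: it takes the Skolem hulls $N_\varepsilon=\mathcal{Hull}_{\mathbb{M}}(\alpha_\varepsilon\cup\{z\})$ directly (with $\mathbb{M}$ the full structure on $\HH{\kappa^+}$, augmented by canonical Skolem functions and the sequence $F=\seq{f_\gamma}{\kappa\le\gamma<\kappa^+}$ of $<_{L[A]}$-least bijections $\kappa\to\gamma$), and then cites {\cite[Theorem~1]{MR3436373}} to conclude that each such $N_\varepsilon$ collapses to some $L_\beta[A]$ with $\beta<\kappa$. That theorem shows LCC is equivalent to the ostensibly stronger statement that \emph{every} suitable elementary submodel condenses, not merely the members of one chosen filtration. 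With this in hand, the paper's final-clause argument proceeds exactly as you outline: the collapsed structure $\mathbb{M}_\varepsilon$ lies in $L_\gamma[A]$ (since all its relations and functions are definable over $\langle L_\beta[A],\in,A\cap\beta\rangle$), and $C\cap\alpha_\varepsilon$ is then computed inside $L_\gamma[A]$ as the set of $\alpha<\alpha_\varepsilon$ whose hull in $\mathbb{M}_\varepsilon$ meets $\alpha_\varepsilon$ in $\alpha$. To repair your argument, you should either cite this theorem as the paper does, or reprove the relevant implication.
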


\begin{proof}
 Fix $z\in\HH{\kappa^+}$ and consider the structure $$\mathbb{M} ~ = ~ \langle\HH{\kappa^+},\in,\seq{L_\gamma[A]}{\gamma<\kappa^+},F,S\rangle,$$ where $S$ is a set of Skolem functions for the given structure that are defined using the canonical well-ordering $<_{L[A]}$ of $L[A]$ and $F=\seq{f_\gamma}{\kappa\leq\gamma<\kappa^+}$ is the unique sequence with the property that  $f_\gamma$ is the $<_{L[A]}$-least bijection between $\kappa$ and $\gamma$ for all $\kappa\leq\gamma<\kappa^+$.  
  Let $C$ denote the set of all $\alpha<\kappa$ with the property that $\alpha$ is equal to the intersection of $\kappa$ with $\mathcal{Hull}_{\mathbb{M}}(\alpha\cup\{z\})$. Then $C$ is a closed unbounded subset of $\kappa$. Let $\seq{\alpha_\varepsilon}{\varepsilon<\kappa}$ denote its monotone enumeration. 
  Given $\varepsilon<\kappa$, set $N_\varepsilon=\mathcal{Hull}_{\mathbb{M}}(\alpha_\varepsilon\cup\{z\})$ and let $\mathbb{N}_\varepsilon$ denote the substructure of $\mathbb{M}$ with underlying set $N_\varepsilon$. 
  In addition, for each $\varepsilon<\kappa$, we let $\mathbb{M}_\varepsilon$ denote the unique structure in the same language as $\mathbb{M}$ that has the transitive collapse of $N_\varepsilon$ as its  underlying set and is isomorphic to $\mathbb{N}_\varepsilon$ via the corresponding uncollapsing map.

  Then $\seq{N_\varepsilon}{\varepsilon<\kappa}$ is an increasing and continuous sequence of elementary substructures of $\HH{\kappa^+}$ with $z\in N_0$ and $\varepsilon\subseteq N_\varepsilon\cap\kappa\in\kappa$ for all $\varepsilon<\kappa$. 
  Moreover, if $\varepsilon<\kappa$, then {\cite[Theorem 1]{MR3436373}} ensures that the transitive collapse of $N_\varepsilon$ is equal to $L_\beta[A]$ for some $\beta<\kappa$.  
  Finally, pick $\varepsilon<\beta<\gamma<\kappa$ with the property that the transitive collapse of $N_\varepsilon$ is equal to $L_\beta[A]$ and  $L_\gamma[A]$ is a model of $\ZFC^-$. 
  We then know that the structure $\mathbb{M}_\varepsilon$ is an element of $L_\gamma[A]$, because $L_\gamma[A]$ contains both $L_\beta[A]$ and $A\cap\beta$ as elements and elementarity ensures that the functions and relations of $\mathbb{M}_\varepsilon$ are all definable over the structure $\langle L_\beta[A],\in,A\rangle$. 
  Moreover, note that $C\cap\alpha_\varepsilon$ is equal to the set of all $\alpha<\alpha_\varepsilon$ with the property that $\alpha$ is equal to the intersection of $\alpha_\varepsilon$ with $\mathcal{Hull}_{\mathbb{M}_\varepsilon}(\alpha\cup\{\bar{z}\})$, where $\bar{z}$ denotes the image of $z$ under the transitive collapse of $N_\varepsilon$. 
  In combination, this shows that $C\cap\alpha_\varepsilon$ is an element of $L_\gamma[A]$. 
\end{proof}


\subsection{Forcing axioms}
 As our final example of a canonical setting in which the non-stationary ideal is not $\mathbf{\Delta}_1$-definable, we present the statement of a result of Hoffelner, Larson, Schindler and Wu in \cite{HOFFELNER_LARSON_SCHINDLER_WU_2023} that shows that strong forcing axioms imply that $\Club{\omega_1}$ is not a $\mathbf{\Delta}_1$-subset of $\POT{\omega_1}$. 
 Remember that \emph{Woodin's axiom $(*)$} postulates that the Axiom of Determinacy holds in $L(\mathbb{R})$ and $L(\POT{\omega_1})$ is a $\PPP_{max}$-extension of $L(\mathbb{R})$ (see {\cite[Chapter 5]{MR1713438}}). The definition of \emph{Bounded Martin's Maximum $\BMM$} can be found in \cite{MR1324501} and {\cite[Section 10.3]{MR1713438}}.

\begin{theorem}[\cite{HOFFELNER_LARSON_SCHINDLER_WU_2023}]\label{theorem:HoffelnerEtAlDelta}
 Assume that either Woodin's axiom $(*)$ holds or there is a Woodin cardinal and $BMM$ holds. Then $\Club{\omega_1}$ is not a $\mathbf{\Delta}_1$-subset of $\POT{\omega_1}$. 
\end{theorem}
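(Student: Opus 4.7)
The plan is to derive the failure of $\mathbf{\Delta}_1$-definability of $\Club{\omega_1}$ by combining two ingredients that both hypotheses supply: $\Sigma_1$-absoluteness for $\HH{\aleph_2}$ under stationary set preserving forcing (i.e., $\BMM$ itself, which follows from $(*)$ and is assumed outright in the second alternative), together with the saturation of $\NS{\omega_1}$ (a theorem of Shelah in the $\BMM$-plus-Woodin case, and a consequence of the analysis of the $\PPP_{max}$-extension in the $(*)$-case). The former provides a reflection mechanism; the latter ensures a rich family of stationary set preserving forcings capable of modifying the non-stationary status of prescribed subsets of $\omega_1$.

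Assume, towards a contradiction, that $\NS{\omega_1}$ is defined by a $\Sigma_1$-formula $\psi(v_0,v_1)$ with some parameter $z\in\HH{\aleph_2}$, so that
\begin{equation*}
A\in\NS{\omega_1} ~ \Longleftrightarrow ~ \psi(A,z) \qquad \text{for every } A\subseteq\omega_1.
\end{equation*}
Fix a stationary and co-stationary subset $S$ of $\omega_1$, so that $\neg\psi(S,z)$ holds in $V$. The crucial step is to produce a stationary set preserving partial order $\PPP$ with $\mathbbm{1}_\PPP\Vdash\anf{\check{S}\in\NS{\check{\omega}_1}}$; once this is in hand, $\psi(S,z)$ holds in $V^\PPP$, and an application of $\BMM$ to the $\Sigma_1$-formula $\psi$ with the parameter $\langle S,z\rangle\in\HH{\aleph_2}^V$ reflects this statement down to $V$, yielding $\psi(S,z)$ in $V$ and thereby contradicting the stationarity of $S$.

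The construction of the forcing $\PPP$ is the technical core of the argument and the main obstacle. The naive candidate, namely the partial order shooting a closed unbounded subset through $\omega_1\setminus S$, is not in general stationary set preserving, since it annihilates every stationary subset of $S$. To circumvent this, one exploits the saturation of $\NS{\omega_1}$ and the associated generic elementary embedding: under $(*)$ the argument would proceed via the density of appropriate $\PPP_{max}$-conditions in $L(\POT{\omega_1})$, which realize the desired configuration while keeping a stationary set preserving forcing at hand; under $\BMM$ with a Woodin cardinal one uses Shelah's saturation theorem together with a careful analysis of the generic ultrapower of $V$ by $\NS{\omega_1}$ to produce an analogous stationary set preserving forcing. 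Verifying stationary set preservation of the forcing so constructed, and fine-tuning the $\PPP_{max}$-machinery in the $(*)$-case, are the most delicate points of \cite{HOFFELNER_LARSON_SCHINDLER_WU_2023}; it is precisely at this step that the additional strength of $(*)$ or $\BMM$ beyond mere saturation of $\NS{\omega_1}$ is genuinely used, consistent with the observation quoted earlier that $\aleph_2$-saturation alone is insufficient for the conclusion.
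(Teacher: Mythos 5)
Note first that the paper you are comparing against does not prove this theorem at all: it is quoted from Hoffelner--Larson--Schindler--Wu with a citation, so your sketch can only be judged on its own merits, and as it stands it contains a fatal gap at the step you yourself call crucial. You ask for a \emph{stationary set preserving} partial order $\PPP$ with $\mathbbm{1}_\PPP\Vdash\anf{\check{S}\in\NS{\check{\omega}_1}}$ for a set $S$ that is stationary in $V$. No such $\PPP$ can exist: being stationary set preserving means precisely that every $V$-stationary subset of $\omega_1$, in particular $S$ itself, remains stationary in every generic extension, so no amount of saturation of $\NS{\omega_1}$, generic ultrapowers, or $\PPP_{max}$ machinery can ``circumvent'' this. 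Moreover, the way you intend to use the hypothesis is inverted and then never really used. The ideal $\NS{\omega_1}$ is provably definable by a $\Sigma_1$-formula with parameter $\omega_1$ (``there is a club disjoint from $A$''), so assuming its $\Sigma_1$-definability ``towards a contradiction'' is vacuous; the content of the theorem is that $\NS{\omega_1}$ is not \emph{$\Pi_1$}-definable, equivalently that the family of stationary sets is not $\Sigma_1$-definable over $\HH{\aleph_2}$, and that is the assumption a correct argument must refute. In your sketch the contradiction you would reach (reflecting ``$S$ is non-stationary'' to $V$ via $\BMM$) follows already from the existence of the impossible forcing and would contradict preservation of stationarity, not definability; also, for an arbitrary hypothesized $\Sigma_1$-definition $\psi$ there is no reason that $\psi$ continues to define $\NS{\omega_1}$ in $V^\PPP$, so even the step ``$\psi(S,z)$ holds in $V^\PPP$'' is unjustified.

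Two of the supporting claims are also off: $(*)$ is not known to imply $\BMM$ (this is one reason the two hypotheses are treated separately), and $\BMM$ together with a Woodin cardinal does not yield saturation of $\NS{\omega_1}$ --- Shelah's theorem is that saturation can be \emph{forced} from a Woodin cardinal, while it is $\MM$, not $\BMM$, that implies it outright; under $(*)$ saturation does hold, but that alone cannot drive your argument, since the paper itself points out (via this very theorem combined with \cite{MR4250390} and \cite{MM}) that $\aleph_2$-saturation of $\NS{\omega_1}$ is consistent with $\Club{\omega_1}$ being $\mathbf{\Delta}_1$. A workable strategy must assume that stationarity is $\Sigma_1$-definable with a parameter $z\in\HH{\aleph_2}$ and then exhibit a genuinely stationary set preserving forcing (so that $\BMM$, respectively the $\PPP_{max}$-analysis under $(*)$, applies) which forces a new $\Sigma_1$-statement over $\HH{\aleph_2}$ whose reflection to $V$, combined with the hypothesized definition, certifies stationarity or non-stationarity of some set incorrectly; arranging such a statement compatibly with preserving all $V$-stationary sets is exactly the delicate work of \cite{HOFFELNER_LARSON_SCHINDLER_WU_2023}, and your sketch does not contain the idea needed for it.
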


 Since Asper\'{o} and Schindler \cite{MR4250390}  proved that the strengthening $\MM^{++}$ of Martin's Maximum (see \cite{MM} and {\cite[Definition 2.45]{MR1713438}}) implies that Woodin's axiom $(*)$ holds, it follows that this forcing axiom provides a negative answer to Question \ref{question:Q1}. In contrast, Hoffelner, Larson, Schindler and Wu proved in \cite{PFA_Delta_1} that the \emph{Proper Forcing Axiom $\PFA$} does not suffice for the above conclusion.


\section*{Acknowledgements}

The author gratefully acknowledges support by the \emph{Deutsche Forschungsgemeinschaft} (Project number 522490605). 
 In addition, he is thankful to Peter Holy for a very helpful discussion on the results of \cite{MR2860182}. 
 Finally, he would like to thank  the anonymous referee for the careful reading of the manuscript and several helpful comments.


 \bibliographystyle{plain}
\bibliography{references}

\end{document}